\newcommand{\ra}[1]{\renewcommand{\arraystretch}{#1}}
\newlength{\minipagewidth}
\newcommand{\beqN}{\begin{equation*}}  
\newcommand{\eeqN}{\end{equation*}}
\def\text#1{\hbox{\rm#1}}
\newtheorem{theorem}{Theorem}[section]
\newtheorem{lemma}{Lemma}[section]
\newtheorem{corollary}{Corollary}[section]
\newtheorem{definition}{Definition}[section]
\newcommand{\beq}{\begin{equation}}  
\newcommand{\eeq}{\end{equation}}
\newcommand{\ben}{\begin{enumerate}}  
\newcommand{\een}{\end{enumerate}}
\newcommand{\bed}{\begin{itemize}}  
\newcommand{\eed}{\end{itemize}}
\newcommand {\BB} {{\mathcal B}}
\newcommand {\ee} {{\mathbb E}}
\newcommand {\FF} {{\mathcal F}}
\newcommand {\II}{{\mathcal I}}
\newcommand {\one} {{\mathbbm{1}}}
\newcommand {\PP} {{\mathcal P}}
\newcommand {\pp} {{\mathbb P}}
\newcommand {\rr} {{\mathbb R}}
\newcommand {\set}{{\mathcal{S}}}
\begin{document}
\begin{frontmatter}

\title{Adaptive confidence intervals for the tail coefficient in a wide second order class of Pareto models}%
\runtitle{Uniform and adaptive confidence interval in the Pareto model}

\begin{aug}
  \author{\fnms{Alexandra}  \snm{Carpentier}
\thanksref{e1}
\ead[label=e1, mark]{a.carpentier@statslab.cam.ac.uk}}
  \and
  \author{\fnms{Arlene K. H.} \snm{Kim}
\thanksref{e2}
\ead[label=e2,mark]{a.kim@statslab.cam.ac.uk}}

  \runauthor{Carpentier and Kim}

  \affiliation{University of Cambridge and University of Cambridge}

  \address{Statistical Laboratory, Centre for Mathematical Sciences, Wilberforce Road, University of
Cambridge, UK, CB3 0WB. \printead{e1,e2}}

\end{aug}

\begin{abstract}
We study the problem of constructing uniform and adaptive confidence intervals for the tail coefficient in a second order Pareto model, when the second order coefficient is unknown. This problem is translated into a testing problem on the second order parameter. 
By constructing an appropriate model and an associated test statistic, we provide a uniform and adaptive confidence interval for the first order parameter. 
We also provide an almost matching lower bound, which proves that the result is minimax optimal up to a logarithmic factor.
\end{abstract}
\begin{keyword}
\kwd{Confidence intervals}
\kwd{minimax testing}
\kwd{Pareto model}
\kwd{extreme value theory}
\end{keyword}

\end{frontmatter}

\section{Introduction}
The Pareto model for the tail of a distribution is a useful tool to understand extremal phenomena. Indeed, the Fisher--Tippett--Gnedenko theorem states that a necessary and sufficient condition for the convergence in law of the (rescaled) maximum of i.i.d.~samples to a Fr$\acute{\text{e}}$chet distribution is that the tail of the distribution of the sample is regularly varying. We say that the distribution $F$ is regularly varying with parameter $\tau$ in the tail if the following holds:
\begin{equation}\label{paretolike}
1-F(x) = x^{-\tau}l(x),
\end{equation}
where $l(x)$ is slowly varying at infinity, that is, such that for any $x>0$, $\lim_{t \rightarrow \infty} l(tx)/l(t) = 1$.

There has been considerable work on estimating $\tau$, and many estimators (Hill's estimator of \cite{hill1975}, Pickands' estimator of \cite{pickands1975} for instance) have been suggested. Such estimators are consistent for the true parameter $\tau$ for distributions in the model~(\ref{paretolike}). In order to obtain a rate of convergence for these estimators, some additional assumptions have to be made. A typical one is the so-called (exact) Hall condition $1-F(x)-Cx^{-\tau} = C' x^{-\tau(\beta+1)}+o(x^{-\tau(\beta+1)}),$ where $\tau>0$ is the first order parameter, and $\beta>0$ is the second order parameter. The second order parameter characterizes the degree of proximity of the set of distributions to the exact Pareto distribution with parameters $\tau,C$. Here we assume a more general condition that we will call the second order Pareto assumption. That is, one considers the set of distributions $F$ that satisfies for some $C,C'$
\begin{equation}\label{SOP0}
|1-F(x)-Cx^{-\tau}| \leq C' x^{-\tau(\beta+1)}.
\end{equation}
This condition is a relaxation of the exact Hall condition in that the tail of the distribution does not have to satisfy the limiting form in the exact Hall model. We write $\set(\tau, \beta, C, C')=:\set(\tau, \beta)$ for the set of distributions satisfying (\ref{SOP0}). 

Assuming that the parameter $\beta$ is known, a classical result \citep[for instance, see][]{drees2001} under the exact Hall model states that Hill or Pickands' estimators using the information of $\beta$ satisfy (for $\epsilon>0$)
\begin{equation}\label{eq:est}
\sup_{F \in \bigcup_{\tau >\epsilon}\set(\tau, \beta)}| \hat \tau - \tau| =O_P(\gamma_n) =  O_P(n^{-\beta/(2\beta+1)}).
\end{equation}
Limiting distributions under the second order Pareto condition~(\ref{SOP0}) are obtained in~\cite{hall1982} and \cite{hall1984}. Also, a matching lower bound for this rate of estimation is proved in~\cite{hall1984} and \cite{drees2001}. Moreover, this type of bound (\ref{eq:est}) can be used in order to create a confidence interval for $\tau$ of width of order $\gamma_n$ with known $\beta$ \citep[for the asymptotic confidence interval under the exact Hall model, see][]{cheng2001, haeusler2007}. 

However, $\beta$ is unknown in general. There has been some work on estimating $\tau$ under various other assumptions in this case \citep[e.g. see][]{hall1985adaptive, drees1998, danielsson2001, carpentierkim}. In particular, under the second order Pareto model (\ref{SOP0}), \cite{carpentierkim} prove that it is possible to construct adaptive estimator of $\tau$ whose risk is the same (up to a $(\log\log(n))^{\frac{\beta}{2\beta+1}}$ factor) as the oracle rate $\gamma_n = n^{-\frac{\beta}{2\beta+1}}$ which is the optimal rate when $\beta$ is known.

The goal of this paper is to construct uniform and adaptive confidence intervals for $\tau$ when $\beta$ is unknown. 
That is, we want to build confidence intervals that have controlled coverage and optimal width $\gamma_n :=\gamma_n(\beta)$ (adaptively to the ``true" $\beta$ of the function) \textit{uniformly} over the set of second order Pareto distributions. This question is closely related to the problem of estimating~$\beta$. 
One possible approach for estimating $\beta$ is to restrict the set of distributions to those verifying a third order condition, which is more restrictive than both conditions (\ref{SOP0}) and the exact Hall condition. Under the third order condition, it is possible to estimate $\beta$ with a rate depeding on the third order parameter \citep[for more details, see][]{beirlant2008, gomes2008}, which might be used to construct the confidence interval. 
However, it is not clear which types of conditions for the parameter space are necessary for our goal. In fact, it has been an open question whether it is possible to construct uniform and adaptive confidence intervals for $\tau$ with unknown $\beta$ under the relaxed condition (\ref{SOP0}).
Using information theoretic bounds, this paper reveals the minimal conditions under which uniform and adaptive confidence intervals can be constructed. We discover that both the exact Hall condition and the third order condition are not required for this task by showing that the model we consider is strictly larger than those two cases (see Subsection \ref{subsec:cicontinuum} for more details). Although we do not provide tight constants, our result is minimax optimal (up to a logarithmic term). 
  

Related issues were considered in the domain of non-parametric functional estimation \citep[e.g.][]{low1997nonparametric, juditsky2003nonparametric, robins2006adaptive, gine2010confidence, hoffmann2011adaptive, bull2011adaptive, carpentier2013Lp}. In this area, one wants to construct a confidence set around a smooth  function with the smoothness parameter $s$. Similar to the case where estimating $\tau$ will depend on the unknown second order parameter $\beta$, the minimax rate of estimation over the set of $s-$smooth functions depends on the unknown smoothness $s$. 
Then, the oracle width of a confidence interval should depend on $s$. 
These papers investigate the case where $s$ is not available. In a first instance, they consider a simpler but related problem where one wants to decide between only two possible smoothness $s_0>s_1>0$. They state that it is neither possible to test between $s_0$ and $s_1$ uniformly over the set of smooth functions, nor to construct uniform and adaptive confidence sets on the whole model. However, it is uniformly possible on a restricted model where one removes some functions (a ring around the set of $s_0-$smooth functions, that is, functions that are $s_1$ smooth but close to $s_0$ smooth functions). These papers prove the minimax-optimal size of the set of functions that one has to remove.

In this paper, we construct an adaptive confidence interval for $\tau$ based on a testing procedure on the second order parameter, and show that this is minimax optimal up to a logarithmic factor.
We first consider the testing problem between $H_0: F \in \set(\tau, \beta_0)$ and $H_1: F \in \set(\tau, \beta_1)$ for $\beta_0>\beta_1>0$, and propose a test statistic for solving this problem (see Equation~(\ref{teststat})).
As we will prove in Section \ref{main},  
it is impossible to test between $H_0$ and $H_1$ uniformly over the whole set of $\beta_1$ second order Pareto distributions, and also impossible to construct an adaptive and uniform confidence interval for $\tau$. However, by removing a specific region of the set of second order Pareto distributions, we characterize a model that is maximal in a minimax sense, and for which the constructed confidence interval is uniform and adaptive over the class (\ref{model}) of distributions. We then use this testing idea developed in the two-points case for treating the case of a continuum of $\beta$. We provide, also in this case, a construction of an adaptive and uniform confidence interval for $\tau$. The model on which we prove that an uniform and adaptive confidence interval for $\tau$ exists is larger than the models considered in previous works such as~\cite{beirlant2008, gomes2008, gomes2012}. Moreover, we explain how to modify our method to consider a wider class of distributions such that the second order term is regularly varying in the tail. 
Finally we illustrate how to construct our adaptive intervals and compare several confidence intervals based on Hill's estimator with various sample fractions by simulations.

\section{Setting}\label{sec:setting}
Let $\mathcal D$ be the set of distributions on $\mathbb R^+$ that are c$\grave{\text{a}}$dl$\grave{\text{a}}$g (continuous on the right, limit on the left). We define the following subset of heavy tailed distributions (often referred to as second order Pareto distributions in the literature) given four parameters $\tau,\beta,C,C'>0$.
\begin{align*}\label{SOP}
\set(\tau, \beta, &C, C') \\
&:=  \left\{ F \in \mathcal D: \forall x \ \text{s.t.} \ F(x) \in (0,1], |1-F(x)-Cx^{-\tau}| \leq C' x^{-\tau(\beta+1)} \right\}. 
\end{align*}
From the definition, $F \in \set(\tau, \beta, C, C')$ satisfies a Pareto-like tail condition when $x \rightarrow~\infty$, that is, $1-F(x) \sim x^{-\tau}$ with the first order parameter $\tau$.
In addition, we note that $\beta$ characterizes the proximity of the distributions to the exact $\tau$-Pareto distribution.
Indeed, if $\beta$ is large, then the distribution is close to the $\tau$-Pareto distribution, while if $\beta$ is small, it is further away from the exact $\tau$-Pareto distribution. Note that in the particular case $\beta = \infty$, $\set(\tau,\beta,C,C')$ boils down to containing only one function $F_0(x) = 1-Cx^{-\tau}$ for $x \geq C^{1/\tau}$.

Let us first consider the simple case of distinguishing between two given second order parameters $\beta_0$ and $\beta_1$. 
We let $\beta_0 > \beta_1>0$, and consider two sets of second order Pareto distributions 
$\set(\tau,\beta_0,C,C')$ and
$\set(\tau,\beta_1,C,C')$.
Since  $\set(\tau,\beta_0,C,C') \subset \set(\tau,\beta_1,C, C')$, there does not exist a uniformly consistent test (see Definition \ref{unif_test}) for the following hypotheses
$$H_0: F \in \set(\tau,\beta_0, C, C') \quad \textbf{vs.} \quad H_1: F \in \set(\tau,\beta_1, C, C').$$
In order to get around this problem, we restrict the set $\set(\tau, \beta_1, C,C')$ to distributions which are \textit{not too close} to $\set(\tau,\beta_0,C,C')$.
Closeness is measured in the following sense,
\begin{equation*}\label{measure}
\|F - F_0\|_{\infty, \tau} := \sup_{x: F(x) \in (0,1)} \left|x^\tau (1-F(x)) - x^\tau(1- F_0(x)) \right|.
\end{equation*}
Then, the modified set of $\set(\tau,\beta_1, C,C')$ is defined as
\begin{equation}\label{modifiedset}
\tilde{\set}(\tau,\beta_1, \beta_0, C,C',\rho_n) := \left\{ F \in \set(\tau,\beta_1, C,C'): \|F -\set(\tau,\beta_0,C,C')\|_{\infty, \tau} \geq \rho_n \right\}.
\end{equation}
It is straightforward to check that $\tilde{\set}(\tau,\beta_1, \beta_0,C,C',0) = \set(\tau,\beta_1,C,C')$, but for $\rho_n>0$, these sets are proper subsets of $\set(\tau,\beta_1,C,C') \backslash \set(\tau,\beta_0,C,C')$. 

Consider now the modified testing problem
$$H_0': F \in \set(\tau,\beta_0, C, C') \quad \textbf{vs.} \quad H_1': F \in \tilde\set(\tau,\beta_1,\beta_0, C, C',\rho_n).$$
We recall that a statistical test $\Psi_n$ is a measurable function that takes values in $\{0,1\}$ of $n$ i.i.d. observations $X_1, \ldots, X_n$ from some distribution $F$. We say that a test is uniformly consistent if the rejection probability $\sup_{F \in H_0}\ee_{F} \Psi_n$ under the null hypothesis and non-rejection probability $\sup_{F \in H_1} \ee_{F} (1-\Psi_n)$ under the alternative hypothesis become small when $n$ increases. 
In other words, for a uniformly consistent test, both the type I error and type II error are uniformly well controlled by some predetermined level, which is denoted as $\alpha$.
More formally, we define an $\alpha$-uniformly consistent test in Definition \ref{unif_test}.
\begin{definition}[$\alpha$-uniformly consistent test]\label{unif_test}
A test $\Psi_n$  between two hypotheses $H_0$ and $H_1$ is $\alpha$-uniformly consistent if for $n$ large enough, we have
$$\sup_{F \in H_0}\mathbb E_F \Psi_n + \sup_{F \in H_1}\mathbb E_F (1-\Psi_n) \leq \alpha.$$
\end{definition}
In order for a test to verify this condition, we emphasize that both errors should be bounded in the worst case, that is, in a minimax sense, not only in a pointwise sense. This problem is related to uniformly consistent estimation of the parameter $\beta$ of a distribution $F$. Indeed, if one can construct many such tests on refined grids of an interval $[b,B]$, one can deduce from the outcome of these tests an estimate of $\beta$ that will be uniformly consistent for a model that will depend on $\rho_n$ (see Subsection \ref{subsec:cicontinuum}).

$\alpha$-uniformly consistent test can be useful for constructing an $\alpha$-adaptive and uniform confidence interval for $\tau$ over a model $\mathcal P_n$, and for a set $\mathcal I_b$ of parameters $\beta$. We let $\mathcal{I}_1$ be the possible range of $\tau$ and $\mathcal{I}_2$ be the possible range of $C$.
A confidence interval $C_n$ is a subset of $[0, \infty)$, and
 the diameter $|C_n|$ of $C_n$ is the length of the interval $C_n$.
We now provide the following definition of $\alpha$-adaptive and uniform confidence interval for $\tau$.
\begin{definition}[$\alpha$-adaptive and uniform confidence interval for $\tau$]\label{def:CI}

A confidence interval $C_n$ for $\tau$ is $\alpha$-adaptive and uniform for a model $\PP_n$, two sets $\mathcal I_1, \mathcal I_2$ and a constant $C'$ over a set $\mathcal I_{b}$, if there exists a constant $M$ such that for $n$ large enough, the following two conditions are satisfied simultaneously.
\begin{equation}\label{def2.2.1}
\sup_{\beta \in \mathcal I_{b}} \ \  \sup_{F \in  \bigcup_{\tau \in \mathcal I_1, C \in \mathcal I_2} \left\{ \set(\tau, \beta, C, C')\bigcap \PP_n \right\}} \pp_{F} (|C_n|  > M n^{-\frac{\beta}{2\beta+1}}) \leq \alpha, 
\end{equation}
and
\begin{equation}\label{def2.2.3}
\inf_{F \in \PP_n} \pp_F (\tau \in C_n) \geq 1-\alpha.
\end{equation}
\end{definition}
Inequality (\ref{def2.2.1}) implies that the diameter of the confidence interval is not larger than the oracle diameter, where the oracle diameter is the same as the minimax-optimal rate of estimation of the parameter $\tau$ under the correct model for $\beta$. In other words, it means that the confidence interval is adaptive to $\beta$. When $\beta$ can take only two values, for instance $\mathcal I_b = \{\beta_0, \beta_1\}$, such a confidence interval is adaptive over the two points $\beta_0$ and $\beta_1$. One could also consider a continuous range of $\beta$, for instance $\mathcal I_b = [b,B]$ where $0<b<B$, such that a confidence interval would be adaptive over a continuum of parameters. In both cases, the model $\mathcal P_n$ has to be restricted in order to ensure the existence of such a test, and is typically smaller than respectively $\bigcup_{\tau \in \mathcal I_1, C \in \mathcal I_2} \set(\tau, \beta_1, C, C')$ and $\bigcup_{\tau \in \mathcal I_1, C \in \mathcal I_2} \set(\tau, b, C, C')$. Inequality (\ref{def2.2.3}) implies that the confidence interval contains the true parameter with high probability. Again, this definition demands uniformity over the whole model $\PP_n$. In the two points case $\mathcal I_b = \{\beta_0, \beta_1\}$, we will consider $\mathcal P_n =  \bigcup_{\tau \in \mathcal I_1, C \in \mathcal I_2}  \Big(\set(\tau,\beta_0,C,C') \bigcup \tilde{\set}(\tau,\beta_1, \beta_0,C,C',\rho_n) \Big)$ where $\rho_n$ is specified later. In the more general case $\mathcal I_b = [b,B]$, the definition of $\mathcal P_n$ will be more involved and described later.


\section{Main results}\label{main}

We consider two settings. In a first instance, we assume preliminary knowledge of $\tau$ and $C$. This is rather a toy setting, but it is useful for us to understand precisely the mechanisms of the problem. In a second instance, we extend the ideas used for this simple setting to the case where $\tau$ and $C$ are unknown.

\subsection{$\alpha$-uniformly consistent test when $\tau,C$ are known}

We consider in this subsection only two possible values for the parameter $\beta$, that we write $\beta_0, \beta_1$ with $0<\beta_1<\beta_0$.

We first assume that $(\tau,C)$ are available, as well as $C'$ (which should be upper bounded). In this setting, the problem of building uniform and adaptive confidence intervals for $\tau$ is meaningless. But we can still cast in a meaningful way the problem of building a uniformly consistent test between
\begin{equation} \label{twosets}
\set_0(\tau,C):= \set(\tau,\beta_0,C,C') \quad \text{and} \quad \tilde \set_1(\tau, C, \rho_n) := \tilde \set(\tau,\beta_1,\beta_0, C, C' \rho_n).
\end{equation}
We consider the following testing problem
$$
H_0: F \in  \set_0(\tau,C) \quad \textbf{vs.} \quad H_1: F \in \tilde \set_1(\tau,C,\rho_n),
$$
and we are interested in the minimal order of $\rho_n$ such that there exists a uniformly consistent test for this problem.

\begin{theorem}\label{thm:test1}
Let $\alpha>0$, $\tau,C>0$, and $\beta_0>\beta_1>0$ (and it may be that $\beta_0 = \infty$) be given. Then,
\bed
\item[ \textbf{A.}] An $\alpha$-uniformly consistent test exists for some sequence $(\rho_n)_n$ such that $$\lim \sup_{n \rightarrow \infty} \rho_n n^{\beta_1/(2\beta_1+1)} < \infty.$$

\item[\textbf{B.}] If there exists an $\alpha$-uniformly consistent test, then necessarily
$$
\lim \inf_{n \rightarrow \infty} \rho_n n^{\beta_1/(2\beta_1+1)} >0.
$$
\eed
\end{theorem}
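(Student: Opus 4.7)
The plan is to prove Parts A and B separately. Part A (existence) is handled by constructing an explicit test based on empirical exceedances of thresholds; Part B (lower bound) is a two-point Le Cam argument against a localized perturbation of the exact Pareto. For Part A, I would use the statistic
\[
\hat{S}_n(v) := v^{\tau}\bigl(1-\hat{F}_n(v)\bigr) - C,
\]
where $\hat{F}_n$ is the empirical cdf. For any $F\in\set(\tau,\beta,C,C')$ one has $|\ee_F \hat{S}_n(v)| \leq C' v^{-\tau\beta}$ while $\var_F(\hat{S}_n(v))$ is of order $v^{\tau}/n$ for the relevant range of $v$. Since the location $x^\ast$ of the separation guaranteed by $H_1$ is unknown, I would evaluate $\hat{S}_n$ on a geometric grid $V_n=\{v_j=2^j v_0\}$ with $v_0^\tau\asymp (C'/\rho_n)^{1/\beta_0}$ and $v_J^\tau\asymp (C'/\rho_n)^{1/\beta_1}$, so $|V_n|=O(\log n)$, and reject $H_0$ when $\max_{v\in V_n}|\hat{S}_n(v)|$ exceeds $\lambda_n=\kappa\, n^{-\beta_1/(2\beta_1+1)}$. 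Under $H_0$ the bias at every $v_j$ is bounded by $C'v_j^{-\tau\beta_0}\le \lambda_n/2$ by the choice of $v_0$, and the stochastic term is dominated via Bernstein plus a union bound. Under $H_1$, plugging the exact Pareto $F_0(x)=1-Cx^{-\tau}\in \set_0$ into the definition of $\tilde\set_1$ yields an $x^\ast$ with $|x^{\ast\tau}(1-F(x^\ast))-C|\geq \rho_n$; the $\beta_1$-condition forces $x^{\ast\tau}\leq (C'/\rho_n)^{1/\beta_1}$ and the restriction $F(x^\ast)\in(0,1)$ together with the $\beta_0$-tube width argue $x^{\ast\tau}\geq (C'/\rho_n)^{1/\beta_0}$, so $x^\ast\in[v_0,v_J]$; the nearest $v_j$ then satisfies $|\ee_F \hat{S}_n(v_j)|\gtrsim \rho_n$ and Bernstein again completes the step.

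For Part B, the strategy is classical two-point Le Cam. I take $F_0(x)=1-Cx^{-\tau}$, which lies inside $\set_0$, and construct $F_1$ by a localized density perturbation
\[
f_1(x)-f_0(x)=\eta\,\phi(x/u_n),
\]
where $\phi\in C_c^{\infty}([1,2])$ with $\int\phi=0$, and $u_n,\eta$ are tuned at the boundary of detectability. A change of variables gives $\chi^2(F_1,F_0)\asymp \eta^2 u_n^{\tau+2}$, so setting $\eta\asymp n^{-1/2}u_n^{-(\tau+2)/2}$ yields $n\chi^2=O(1)$ and hence total-variation distance between $F_0^{\otimes n}$ and $F_1^{\otimes n}$ bounded away from $1$, ruling out $\alpha$-uniformly consistent tests for small $\alpha$. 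The $\|\cdot\|_{\infty,\tau}$-size of the perturbation is $\asymp \eta u_n^{\tau+1}$, and this matches $n^{-\beta_1/(2\beta_1+1)}$ precisely when $u_n^{\tau}\asymp n^{1/(2\beta_1+1)}$; at that scale the membership constraint $\eta u_n\leq C'u_n^{-\tau(\beta_1+1)}$ required for $F_1\in\set(\tau,\beta_1,C,C')$ is also saturated, confirming that no better rate is achievable by this construction. To turn separation from the single element $F_0$ into separation from the whole set $\set_0$, I apply a reverse triangle inequality at $x=u_n$: any $F_0'\in\set_0$ satisfies $|u_n^{\tau}(1-F_0'(u_n))-C|\leq C'u_n^{-\tau\beta_0}\asymp n^{-\beta_0/(2\beta_1+1)}$, which is $o(\rho_n)$ since $\beta_0>\beta_1$, so $F_1$ remains $\rho_n$-separated from every $F_0'\in\set_0$ after inflating the constant in $\eta$ by a bounded factor.

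The main obstacle is the verification in Part B that $F_1$ lies outside the $\rho_n$-thickening of the \emph{whole} set $\set_0$ rather than merely of the reference $F_0$; this is where the gap $\beta_0>\beta_1$ is essential, as it ensures the $\set_0$-tube is much thinner at scale $u_n$ than the designed signal. A secondary technical issue in Part A is that a naive union bound over an $O(\log n)$ grid introduces a $\sqrt{\log\log n}$ factor in the critical value; to absorb this into the clean constant demanded by the $\limsup$ statement one either uses monotonicity of $v\mapsto 1-\hat F_n(v)$ to pass from grid to continuum, a sharper empirical-process inequality on the tail indexed by $v\in[v_0,v_J]$, or a sample-splitting device in which half the data selects a candidate $v$ and the other half performs the test at that single scale.
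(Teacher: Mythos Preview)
Your Part B is correct and clean: the smooth-bump construction with $\eta\phi(x/u_n)$ is a legitimate alternative to the paper's piecewise-Pareto $F_1$, and your $\chi^2$ calculation and reverse-triangle argument for separation from all of $\set_0$ are right. The paper's construction is more explicit (three pieces, matched at the boundaries) but yours is arguably simpler.

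Part A, however, has a genuine gap. Your claim that ``the restriction $F(x^\ast)\in(0,1)$ together with the $\beta_0$-tube width argue $x^{\ast\tau}\geq (C'/\rho_n)^{1/\beta_0}$'' is false. Plugging the exact Pareto $F_0$ into the separation condition gives you \emph{one} point $x^\ast$ with $|x^{\ast\tau}(1-F(x^\ast))-C|\geq\rho_n$, but nothing forces that particular $x^\ast$ to be large. Concretely, take $F$ with $1-F(x)=Cx^{-\tau}(1+C'x^{-\tau\beta_1}\psi(x))$ where $\psi$ is a fixed bump supported on $[a,b]$ with $a,b$ not depending on $n$. Then $F\in\set(\tau,\beta_1,C,C')$, and at the bump one has $|x^\tau(1-F(x))-C|=C'x^{-\tau\beta_1}>C'x^{-\tau\beta_0}$, so $F$ lies a fixed positive distance from $\set_0$ in $\|\cdot\|_{\infty,\tau}$; hence $F\in\tilde\set_1$ for all large $n$. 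But your grid starts at $v_0\asymp n^{\beta_1/(\tau\beta_0(2\beta_1+1))}\to\infty$, so every $v_j$ eventually exceeds $b$, where $F$ coincides with the exact Pareto and $\ee_F\hat S_n(v_j)=0$. Your test then accepts $H_0$ with high probability on this $F\in H_1$.

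The paper handles this by building the $\beta_0$-tube width into the statistic: it uses $T_n=\sup_{x\leq B}(|x^\tau\hat p_x-C|-C'x^{-\tau\beta_0})$ with the sup taken over \emph{all} $x\leq n^{1/(\tau(2\beta_1+1))}$, not just a grid bounded away from $1$. Under $H_0$ this is trivially $\leq 0$ everywhere in expectation; under $H_1$ the key step (their Lemma~5.1(ii)) is to compare $F$ not to the exact Pareto but to its projection $\tilde F$ onto $\set_0$, which yields directly $|x_0^\tau p_{x_0}-C|-C'x_0^{-\tau\beta_0}\geq\rho_n$ at some $x_0$. Subtracting the tube width is precisely what lets the test see signals at small $x$ without being swamped by bias there. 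Your approach can be repaired by adopting this modification and extending the grid down to the left endpoint of the support.

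On the secondary issue you flagged: the naive union bound over $O(\log n)$ grid points does cost a $\sqrt{\log\log n}$ factor, which would make $\limsup\rho_n n^{\beta_1/(2\beta_1+1)}=\infty$. The paper's fix (their Lemma~5.2) is a Talagrand-plus-chaining bound on each dyadic shell $I_k=[e^k,e^{k+1})$, then a \emph{weighted} union bound with $\delta_k\propto\exp(-\exp((K-k)\tau))$; the doubly-exponential weights exploit that the variance of $x^\tau\hat p_x$ is largest at the rightmost shell and shrinks geometrically, so the union bound costs only a constant. Your suggestions (monotonicity, sharper empirical process, sample splitting) are all in the right spirit, but you should be aware that sample splitting alone does not obviously localize $x^\ast$ without the tube-subtraction fix above.
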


Theorem~\ref{thm:test1} states that $n^{-\beta_1/(2\beta_1+1)}$ is the minimax-optimal order which ensures the existence of an $\alpha$-uniformly consistent test between $H_0$ and $H_1$. Note that this $\rho_n$ is the same as the rate of estimation under the alternative hypothesis. 
The test statistic we propose in this setting is based on a simple idea: estimating $1-F(x)$ by $\hat p(x)$, and then testing whether there exists a point $x$ such that with a small constant $c$,
\begin{align*}
 |x^{\tau} \hat p_x - C| \geq C'x^{-\tau\beta_0} + c\rho_n. \color{black}
\end{align*}
In other words, we test if the empirical distribution belongs to a small enlargement of $\set_0(\tau,C)$, where this enlargement does not intersect with $\tilde \set_1(\tau,C, \rho_n)$.

\subsection{Uniform and adaptive confidence interval for $\tau$ over two points $\beta_0$, $\beta_1$}

In this subsection, we still consider only two possible values $\beta_0, \beta_1$ for the parameter $\beta$, and the associated sets $\set_0(\tau,C):= \set(\tau,\beta_0,C,C')$ and $\tilde \set_1(\tau, C, \rho_n) := \tilde \set(\tau,\beta_1,\beta_0, C, C' \rho_n)$.
But we extend the previous toy example results for the case where $\tau, C$ are not available (upper bound on the parameter $C'$ is available). Then we are interested in testing 
$$
H_0: F \in  \bigcup_{\tau \in \mathcal{I}_1, C \in \II_2}\set_0(\tau,C) \quad \textbf{vs.} \quad H_1: F \in  \bigcup_{\tau \in \II_1, C \in \II_2}\tilde \set_1(\tau,C, \rho_n),
$$
where $\II_1,\II_2$ are two closed intervals of $(0,\infty)$. 

A natural idea in this case is to plug estimators of $\tau$ and $C$ in the test statistics which are used in the case where $\tau$ and $C$ are known. Doing this leads to the following theorem.
\begin{theorem}\label{thm:test2}
Let $\alpha>0$ and $\beta_0>\beta_1>0$ (and it may be that $\beta_0 = \infty$) be given. Let $\tau \in \II_1, C \in \II_2$ be two unknown parameters.
An $\alpha$-uniformly consistent test exists for some sequence $(\rho_n)_n$ such that $$\lim \sup_{n \rightarrow \infty}  \rho_n \frac{n^{\beta_1/(2\beta_1+1)}}{\log(n)} < \infty.$$
\end{theorem}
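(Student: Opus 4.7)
The plan is to plug consistent estimators of $\tau$ and $C$ into the test of Theorem~\ref{thm:test1} and to show that doing so inflates $\rho_n$ by at most a logarithmic factor.

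First I would construct estimators $\hat\tau$ and $\hat C$ that are uniformly consistent at rate $n^{-\beta_1/(2\beta_1+1)}$ over the class $\bigcup_{\tau\in \II_1,\, C\in\II_2,\,\beta\geq\beta_1}\set(\tau,\beta,C,C')$. A natural choice is Hill's estimator with sample fraction $k_n\asymp n^{2\beta_1/(2\beta_1+1)}$: the bias is of order $(k_n/n)^{\beta}=n^{-\beta/(2\beta_1+1)}\leq n^{-\beta_1/(2\beta_1+1)}$ whenever $\beta\geq \beta_1$, and the stochastic fluctuation is $1/\sqrt{k_n}=n^{-\beta_1/(2\beta_1+1)}$. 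The corresponding plug-in $\hat C=(k_n/n)\,X_{(k_n)}^{\hat\tau}$ inherits the same rate.

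Next, I would replace $\tau,C$ in the test statistic of Theorem~\ref{thm:test1} by $\hat\tau,\hat C$ and reject $H_0$ when
\[
\sup_{x\in \mathcal X_n}\Bigl(\bigl|x^{\hat\tau}\hat p(x)-\hat C\bigr|-C' x^{-\hat\tau\beta_0}\Bigr)\,\geq\, c\rho_n,
\]
where $\mathcal X_n$ is a deterministic polynomial-cardinality grid containing the critical scale $x\asymp n^{1/(\tau(2\beta_1+1))}$ for every $\tau\in \II_1$. The central calculation is the plug-in error at such a critical point: Taylor-expanding $x^{\hat\tau}=x^{\tau} e^{(\hat\tau-\tau)\log x}$ gives
\[
\bigl|x^{\hat\tau}\hat p(x)-x^{\tau}\hat p(x)\bigr|\,\lesssim\, x^{\tau}\hat p(x)\,\log(x)\,|\hat\tau-\tau|\,\lesssim\, \log(n)\,|\hat\tau-\tau|,
\]
since $x^{\tau}\hat p(x)=O_P(1)$ at that scale and $\log x=\Theta(\log n)$. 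The errors from $|x^{-\hat\tau\beta_0}-x^{-\tau\beta_0}|$ and from $|\hat C-C|$ are of the same order or less. Absorbing these into the threshold lets the Type I / Type II analysis of Theorem~\ref{thm:test1} go through with $\rho_n$ of the announced order $\log(n)\,n^{-\beta_1/(2\beta_1+1)}$.

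The hard part is \emph{uniformity}: the plug-in expansion depends on the unknown $x^{\tau}$ and on $\log x$, so the deviation bounds on $\hat\tau$ and $\hat C$ must hold simultaneously for every $F$ in the model (not merely pointwise), and the $\sup$ over $\mathcal X_n$ must be controlled together with the Bernstein-type deviations of $\hat p(x)$ used inside Theorem~\ref{thm:test1}. I would establish this via an exponential inequality for Hill's statistic combined with a union bound over $\mathcal X_n$ and a covering argument in $(\tau,C)\in \II_1\times \II_2$; once this uniform concentration is in hand, the rest of the proof is a mechanical substitution into the argument of Theorem~\ref{thm:test1}.
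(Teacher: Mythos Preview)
Your proposal is essentially the paper's proof: plug estimates $\hat\tau,\hat C$ into the test statistic of Theorem~\ref{thm:test1}, Taylor-expand $x^{\hat\tau}=x^\tau e^{(\hat\tau-\tau)\log x}$ at the critical scale $\log x\asymp\log n$, and absorb the resulting $\log(n)\,n^{-\beta_1/(2\beta_1+1)}$ error into $\rho_n$.

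A few small points where the paper's execution differs from your sketch. First, the paper does not discretize: it takes $\sup_{x\le \hat B}$ with a \emph{random} upper endpoint $\hat B=n^{1/\hat\vartheta}$ chosen so that $\hat B\le n^{1/(\tau(2\beta_1+1))}$ with high probability, and simply reuses the uniform deviation Lemma~\ref{lem:lardev} already proved for Theorem~\ref{thm:test1}; your grid-plus-union-bound works too but is unnecessary. Second, your claim that $\hat C$ ``inherits the same rate'' $n^{-\beta_1/(2\beta_1+1)}$ is slightly optimistic: because $\hat C$ involves $X_{(k_n)}^{\hat\tau}$ (or equivalently $\hat p_{\hat B}$ at $\hat B\asymp n^{1/(\tau(2\beta_1+1))}$), the error $|\hat\tau-\tau|\log X_{(k_n)}$ already contributes a $\log n$, so in fact $|\hat C-C|=O_P(\log(n)\,n^{-\beta_1/(2\beta_1+1)})$---this is harmless for the final rate but worth tracking. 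Third, the covering argument in $(\tau,C)\in\II_1\times\II_2$ is superfluous: once $\hat\tau,\hat C$ are substituted, the test statistic $T_n$ is fully data-driven and does not depend on the unknown $(\tau,C)$; uniformity over the model follows directly from the uniform (in $F$) deviation bounds on $\hat\tau$, $\hat C$, and $\sup_x x^\tau|\hat p_x-p_x|$, with no covering needed.
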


We lose a $\log(n)$ factor with respect to the previous result. This comes from the fact that we have to estimate $\tau$ and $C$. 
Even though we do not know whether this factor is necessary or not, we know from Theorem~\ref{thm:test1} that it is not possible to deviate more than this $\log(n)$ factor (the lower bound of Theorem~\ref{thm:test1} applies a fortiori to this enlarged model).

The previous result is immediately translated into the existence of adaptive and uniform confidence intervals for $\tau$.
\begin{theorem}\label{thm:CI2}
Let $\alpha>0$ and $\beta_0>\beta_1>0$ (and it may be that $\beta_0 = \infty$) be given.  Let $\tau \in \II_1, C \in \II_2$ be two unknown parameters. Also with the notation (\ref{modifiedset}) and (\ref{twosets}), we set the model as follows.
\begin{equation}\label{model}
\PP_n = \Big(\bigcup_{\tau \in \II_1, C \in \II_2}\set_0(\tau,C)\Big) \bigcup \Big(\bigcup_{\tau \in \II_1, C \in \II_2}\tilde \set_1(\tau,C, \rho_n)\Big).
\end{equation}
Then,
\bed
\item[ \textbf{A.} ] An $\alpha$-adaptive and uniform confidence interval for $\tau$ in the model $\PP_n$ over $\mathcal I_b = \{\beta_0, \beta_1\}$ exists for some sequence $(\rho_n)_n$ such that $$\lim \sup_{n \rightarrow \infty} \rho_n \frac{n^{\beta_1/(2\beta_1+1)}}{\log(n)} < \infty.$$
\item[ \textbf{B.} ]  If there exists an $\alpha$-adaptive and uniform confidence interval for $\tau$ in the model $\PP_n$ over $\mathcal I_b = \{\beta_0, \beta_1\}$, then necessarily
$$
\lim \inf_{n \rightarrow \infty} \rho_n n^{\beta_1/(2\beta_1+1)} >0.
$$
\eed
\end{theorem}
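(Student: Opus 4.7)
The plan is to construct the adaptive confidence interval by combining the test from Theorem~\ref{thm:test2} with Hill-type estimators tuned for each $\beta_j$ (Part~A), and to obtain the matching lower bound by converting an adaptive CI into a uniformly consistent test and then invoking Theorem~\ref{thm:test1}(B) (Part~B).

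\textbf{Part~A.} I would start from the $(\alpha/3)$-uniformly consistent test $\Psi_n$ supplied by Theorem~\ref{thm:test2}, which separates $\bigcup_{\tau,C}\set_0(\tau,C)$ from $\bigcup_{\tau,C}\tilde\set_1(\tau,C,\rho_n)$ at level $\alpha/3$ for some $\rho_n\lesssim\log(n)\,n^{-\beta_1/(2\beta_1+1)}$. For each $j\in\{0,1\}$, let $\hat\tau_j$ be a Hill-type estimator whose sample fraction is tuned to $\beta_j$, so that by the classical bound~\eqref{eq:est} it satisfies $|\hat\tau_j-\tau|\leq M_j\,n^{-\beta_j/(2\beta_j+1)}$ with probability at least $1-\alpha/3$ uniformly over $F\in\bigcup_{\tau\in\II_1,C\in\II_2}\set(\tau,\beta_j,C,C')$. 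I then form the plug-in interval
$$
C_n \;=\; \bigl[\hat\tau_{\Psi_n}-M_{\Psi_n}\,n^{-\beta_{\Psi_n}/(2\beta_{\Psi_n}+1)},\ \hat\tau_{\Psi_n}+M_{\Psi_n}\,n^{-\beta_{\Psi_n}/(2\beta_{\Psi_n}+1)}\bigr].
$$
Coverage~\eqref{def2.2.3} follows from a union bound on the test-error and the estimator-error events (total mass at most $2\alpha/3$). For the width condition~\eqref{def2.2.1}, under a true $\beta=\beta_0$ the only way the width exceeds $2M_0 n^{-\beta_0/(2\beta_0+1)}$ is the test error $\{\Psi_n=1\}$, of probability at most $\alpha/3$; under a true $\beta=\beta_1$ both candidate widths are already bounded by $2M_1 n^{-\beta_1/(2\beta_1+1)}$ because $\beta_0>\beta_1$.

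\textbf{Part~B.} Given an $\alpha$-adaptive uniform CI $C_n$ on the model $\PP_n$ of~\eqref{model}, I define the derived test $\Psi_n:=\one\{|C_n|>M n^{-\beta_0/(2\beta_0+1)}\}$. The width condition~\eqref{def2.2.1} applied with $\beta=\beta_0$ bounds its Type~I error by $\alpha$ uniformly over $\bigcup_{\tau,C}\set_0(\tau,C)$. To control Type~II error at a well-chosen $F_1^\star\in\tilde\set_1$, I combine coverage with a two-point Le Cam lower bound for $\tau$-estimation inside $\tilde\set_1$: perturbing the tail so that the first-order parameters of two distributions in $\tilde\set_1$ differ by a multiple of $n^{-\beta_1/(2\beta_1+1)}$ while keeping their $n$-fold laws mutually TV-close (the classical construction used for the second-order Pareto minimax lower bound, cf.~\cite{hall1984,drees2001}) produces an $F_1^\star$ at which no estimator concentrates at the faster rate $n^{-\beta_0/(2\beta_0+1)}$ with probability exceeding $1-\delta$, for some universal $\delta>0$. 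Taking $\hat\tau$ to be the midpoint of $C_n$, the intersection of the small-width event with the coverage event~\eqref{def2.2.3} would force $|\hat\tau-\tau(F_1^\star)|\leq M n^{-\beta_0/(2\beta_0+1)}$, and hence $\pp_{F_1^\star}(\Psi_n=0)\leq 1-\delta+\alpha$. Choosing $\alpha<\delta/2$, the pair $(F_0^\star,F_1^\star)$ produced by the construction behind Theorem~\ref{thm:test1}(B) then satisfies $\pp_{F_0^\star}(\Psi_n=1)+\pp_{F_1^\star}(\Psi_n=0)<1-\varepsilon$ for some $\varepsilon>0$; Le Cam forces the total variation of the $n$-fold laws of $F_0^\star,F_1^\star$ to remain bounded away from $1$, which via Theorem~\ref{thm:test1}(B) is incompatible with $\rho_n=o(n^{-\beta_1/(2\beta_1+1)})$.

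The main obstacle is the coupling step in Part~B: one must arrange for a single $F_1^\star\in\tilde\set_1$ that is simultaneously minimax-adverse for $\tau$-estimation over $\tilde\set_1$ and, at sample size $n$, total-variation close to some $F_0^\star\in\set_0$ produced by the testing lower bound of Theorem~\ref{thm:test1}(B). This requires reconciling the two Le Cam perturbations---the one across $\beta_0\leftrightarrow\beta_1$ that drives the testing lower bound, and the one within $\beta_1$ that drives the estimation lower bound---so that both can be hosted by the same pair of distributions. Part~A, by contrast, is essentially bookkeeping on top of Theorem~\ref{thm:test2} and the classical risk bound~\eqref{eq:est}.
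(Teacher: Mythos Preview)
Your Part~\textbf{A} is correct and is essentially the paper's argument: build the interval around a Hill-type estimator with width selected by the test of Theorem~\ref{thm:test2}, then check~\eqref{def2.2.1} and~\eqref{def2.2.3} by a union bound over the test error and the estimation error. The only cosmetic difference is that the paper keeps a single estimator $\hat\tau$ and switches the width, whereas you switch both the centre $\hat\tau_{\Psi_n}$ and the width; either works.

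Your Part~\textbf{B} is overcomplicated, and the ``main obstacle'' you flag is a phantom. The lower-bound pair $(F_0,F_1)$ already constructed for Theorem~\ref{thm:test1}(B) has \emph{different first-order parameters}: $F_0$ is exact Pareto with index $\tau_0$, while $F_1\in\tilde\set_1$ has index $\tau_1=\tau_0-\upsilon\,n^{-\beta_1/(2\beta_1+1)}$. So the single perturbation that moves you from $\set_0$ to $\tilde\set_1$ \emph{simultaneously} shifts $\tau$ by order $n^{-\beta_1/(2\beta_1+1)}$; you do not need a second Le~Cam construction ``within $\tilde\set_1$'' at all. The paper exploits this by choosing the derived test to depend on the \emph{specific value} $\tau_0$, namely
\[
\Psi_n \;=\; 1-\one\{\tau_0\in C_n\}\,\one\{|C_n|\le D\,n^{-\beta_0/(2\beta_0+1)}\}.
\]
Under $F_0$, Type~I is bounded by coverage plus width, giving $\le 2\alpha$. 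Under $F_1$, the event $\{\tau_0\in C_n,\ |C_n|\le D\,n^{-\beta_0/(2\beta_0+1)}\}$ forces $\tau_1\notin C_n$ (since $|\tau_0-\tau_1|\asymp n^{-\beta_1/(2\beta_1+1)}\gg D\,n^{-\beta_0/(2\beta_0+1)}$ for large $n$), and coverage then gives Type~II $\le\alpha$. One obtains a $3\alpha$-consistent test between $F_0$ and $F_1$, contradicting the KL/Pinsker bound from Theorem~\ref{thm:test1}(B) for $\alpha<1/3$.

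Even your width-only test $\Psi_n=\one\{|C_n|>M n^{-\beta_0/(2\beta_0+1)}\}$ can be pushed through with \emph{just this pair} $(F_0,F_1)$: on $\{\Psi_n=0\}\cap\{\tau_1\in C_n\}$ the midpoint is within $M n^{-\beta_0/(2\beta_0+1)}$ of $\tau_1$, and the analogous statement under $F_0$ (with $\tau_0$) plus disjointness of the two balls and the TV bound between $F_0^{n}$ and $F_1^{n}$ finishes the argument. The separate ``estimation Le~Cam inside $\tilde\set_1$'' and the coupling of two perturbations that you worry about are simply not needed.
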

In Theorem \ref{thm:CI2}, the first claim \textbf{\textit{A.}}  follows from Theorem~\ref{thm:test2}. Indeed, provided an $\alpha$-uniformly  consistent test between $H_0$ and $H_1$, one is able to choose adaptively the sample fraction for estimating~$\tau$ (see Section \ref{subsec:CI2}). Then the risk of this estimate depends on the $\beta$, that is,  the risk is of order $n^{-\frac{\beta}{2\beta+1}}$ where $\beta \in \{\beta_0, \beta_1\}$ is the ``true" parameter. On the other hand, \textbf{\textit{B.}} follows easily from the previously established lower bound.

{\textit{Remark:}} The upper bounds in both Theorems~\ref{thm:test2} and~\ref{thm:CI2} are not matching the lower bounds in Theorems~\ref{thm:test1} and~\ref{thm:CI2} by a $\log(n)$ factor. It is unclear to us whether this $\log(n)$ is necessary or not, but we conjecture that at least some power of $\log(n)$ is necessary, because of the uncertainty on $\tau,C$. Indeed, it is the use of estimators for $\tau$ and $C$ that causes this $\log(n)$ factor. We however believe that proving a matching lower bound with this additional $\log(n)$ factor is very involved, since one would need to consider a composite alternative \textit{and} a composite null hypothesis in the construction of the lower bound (if one of these two is not composite, it is actually possible to construct a test and a confidence interval without the additional $\log(n)$ factor in $\rho_n$). We leave this as an open problem for a future research.

\subsection{Uniform and adaptive confidence interval for $\tau$ over a continuum of parameter $\beta  \in [b,B]$}\label{subsec:cicontinuum}

In this subsection, we extend the results in the two classes to the case of a continuous set of parameters such that $\beta \in [b,B]$, where $0<b<B$. The key idea is to discretize the set $[b, B]$ into about $\log(n)$ number of disjoint intervals and do the successive testings. 
Let the number of grid points be $M_n :=\lfloor \log (n)/\xi \rfloor$ with a positive constant $\xi$. We first discretize the interval $[b,B]$ by a grid of points that are $1/{M_n}$-apart from each other. That is, we let $\beta_i = B - \frac{i(B-b)}{M_n}$ such that $B = \beta_0 > \beta_1 > \beta_2 > \ldots > \beta_{M_n} = b$.
Separation rate $\rho_n(\beta)$ is defined as a function of $\beta$, such that $\rho_n(\beta) = \max \left\{2(E(\alpha/(9M_n))\log(n)+D\log((9M_n)/\alpha), 2C' \right\} n^{-\frac{\beta}{2\beta+1}}$, similarly as we defined in (\ref{rhondefinition}) in the two points test.
We also extend the definition of the modified set $\tilde \set$ in (\ref{modifiedset}) by introducing $\tilde \set(\tau,\beta_{i+2}, \beta_i, C,C', \rho_n(\beta_{i+2}))$ to be the set of $\beta_{i+2}$-second order Pareto distributions that is $\rho_n(\beta_{i+2})$ away from $\set(\tau,\beta_i,C,C')$,
\begin{align}
\tilde{\set}&(\tau,\beta_{i+2}, \beta_i, C,C', \rho_n(\beta_{i+2})) \nonumber\\
 &:= \left\{ F \in \set(\tau,\beta_{i+2}, C,C'): \|F -\set(\tau, \beta_i,C,C')\|_{\infty, \tau} \geq \rho_n(\beta_{i+2}) \right\}.\label{generalrhon}
\end{align}
Consider the model
\begin{equation}\label{newmodel} 
\mathcal P_n =\bigcup_{ \tau \in \mathcal{I}_1, C \in \mathcal{I}_2}  \left( \set(\tau, B,C,C') \bigcup \Biggl( \bigcup_{i=0}^{M_n-2}   \tilde \set(\tau,\beta_{i+2}, \beta_i,C,C', \rho_n(\beta_{i+2}))\Biggr)  \right),
\end{equation}
where $\mathcal I_1, \mathcal I_2$ be two closed sets of $(0, \infty)$.

Let us define, for a given distribution $F\in \set(\tau, b,C,C')$, the index $\beta^*:=\beta^*(F)$ as
\begin{equation}\label{betastar}
\beta^*(F) = \sup_{\beta \in [b,B]} \{\beta: F \in \set(\tau, \beta, C,C')\}.
\end{equation}
This supremum is well defined since it is upper bounded by $B$, and the set $\{\beta: F \in \set(\tau, \beta, C,C')\}$ is non-empty since it contains $b$. This $\beta^*(F)$ can be thought of as the ``intrinsic" $\beta$ of $F$, i.e.~the one that characterizes the complexity of the model to which $F$ belongs.

\begin{theorem}\label{generaltesting}
 Let $\alpha>0$, and let $\mathcal P_n$ be the model defined in (\ref{newmodel}). For a positive constant $\xi$, there exists a sequence $(\rho_n(\beta_i))_{n,i}$ such that
\begin{equation}\label{rhonconti}
\lim \sup_{n \rightarrow \infty} \sup_{0\leq i\leq \lfloor \log (n)/\xi\rfloor }\rho_n(\beta_i) \frac{n^{\beta_i/(2\beta_i+1)}}{\log(n)\log\log(n)^{3/2}} < \infty,
\end{equation}
such that there exists an estimate $\hat \beta$ of $\beta^*(F)$ which satisfies 
\begin{equation*}
\lim \sup_{n \rightarrow \infty} \sup_{F \in \mathcal P_n} \mathbb P_F \Big(|\hat \beta - \beta^*(F)| \geq 4\xi\frac{(B-b)}{\log(n)}\Big) \leq \alpha.
\end{equation*}
\end{theorem}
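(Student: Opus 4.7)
The plan is to discretize the problem, apply Theorem~\ref{thm:test2} to each of the resulting two-point sub-problems, and read off $\hat\beta$ from the rejection pattern. For each $i \in \{0, 1, \ldots, M_n - 2\}$ I would invoke Theorem~\ref{thm:test2} at level $\alpha/(9 M_n)$ to obtain an $\alpha/(9 M_n)$-uniformly consistent test $\Psi_i$ between $H_0^i : F \in \bigcup_{\tau \in \mathcal{I}_1, C \in \mathcal{I}_2} \set(\tau, \beta_i, C, C')$ and $H_1^i : F \in \bigcup_{\tau \in \mathcal{I}_1, C \in \mathcal{I}_2} \tilde\set(\tau, \beta_{i+2}, \beta_i, C, C', \rho_n(\beta_{i+2}))$. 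The explicit formula for $\rho_n(\beta)$ in the statement is tuned to meet the separation required by Theorem~\ref{thm:test2} at this shrunken level; the extra $\log\log(n)^{3/2}$ factor in (\ref{rhonconti}), beyond the $\log(n)$ of Theorem~\ref{thm:test2}, absorbs the $\log(1/(\alpha/(9 M_n))) = O(\log\log n)$ correction due to dividing $\alpha$ by $M_n \asymp \log n$.

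I would then define $\hat j$ to be the largest $i \in \{0, \ldots, M_n - 2\}$ with $\Psi_i = 1$, with the convention $\hat j = -1$ if no test rejects, and set $\hat\beta = \beta_{\hat j + 1}$ (so $\hat\beta = \beta_0 = B$ when $\hat j = -1$). A union bound over at most $M_n - 1$ tests shows that the event $\mathcal{E}$ on which every $\Psi_i$ accepts whenever $F$ is in its null and rejects whenever $F$ is in its alternative satisfies $\mathbb{P}_F(\mathcal{E}^c) \leq \alpha/9 \leq \alpha$ uniformly over $F \in \mathcal{P}_n$, so it remains only to argue that $|\hat\beta - \beta^*(F)| \leq 4\xi(B-b)/\log n$ on $\mathcal{E}$.

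The analysis on $\mathcal{E}$ splits into two cases. If $F \in \set(\tau, B, C, C')$ for some $\tau, C$, then by monotonicity of $\set(\tau, \cdot, C, C')$ in $\beta$ (since $B \geq \beta_i$), every $\Psi_i$ has $F$ in its null and hence accepts; so $\hat j = -1$ and $\hat\beta = B = \beta^*(F)$. Otherwise $F \in \tilde\set(\tau, \beta_{j^*+2}, \beta_{j^*}, C, C', \rho_n(\beta_{j^*+2}))$ for a unique $j^* \in \{0, \ldots, M_n - 2\}$, and consequently $\beta^*(F) \in [\beta_{j^*+2}, \beta_{j^*})$. On $\mathcal{E}$, every $\Psi_i$ with $i \geq j^* + 2$ accepts (since $F \in \set(\tau, \beta_{j^*+2}) \subset \set(\tau, \beta_i)$), while $\Psi_{j^*}$ rejects (since $F$ lies in its alternative), so $\hat j \in \{j^*, j^* + 1\}$ and $\hat\beta \in \{\beta_{j^*+1}, \beta_{j^*+2}\} \subset [\beta_{j^*+2}, \beta_{j^*}]$. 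Combined with $\beta^*(F) \in [\beta_{j^*+2}, \beta_{j^*})$, this yields $|\hat\beta - \beta^*(F)| \leq \beta_{j^*} - \beta_{j^*+2} = 2(B-b)/M_n \leq 4\xi(B-b)/\log n$ for $n$ large enough.

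The main obstacle is that for $i < j^*$ the distribution $F$ lies in neither the null nor the alternative of $\Psi_i$: the null fails since $F \notin \set(\tau, \beta_i)$, while the alternative requires $F \in \set(\tau, \beta_{i+2})$, which also fails once $\beta_{i+2} > \beta^*(F)$, and Definition~\ref{unif_test} then gives no control on $\Psi_i$. This is precisely why $\hat j$ is the \emph{largest} rejecting index rather than the smallest: uncontrolled rejections at small $i$ cannot pollute $\hat j$. The same ambiguity at $i = j^* + 1$ only shifts $\hat j$ between $j^*$ and $j^* + 1$, both of which give estimates within the required window. A secondary and more clerical issue is to verify that the explicit constants $E(\cdot), D$ appearing in $\rho_n(\beta)$ actually meet the threshold of Theorem~\ref{thm:test2} at level $\alpha/(9 M_n)$ uniformly in $i$; this reduces to tracking the constants through the proof of Theorem~\ref{thm:test2}.
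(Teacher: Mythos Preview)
Your proposal is correct and follows essentially the same approach as the paper: discretize $[b,B]$, run the two-point test of Theorem~\ref{thm:test2} at level $\alpha/(9M_n)$ at each grid point, and set $\hat\beta$ from the largest rejecting index. The paper defines $\hat i=\max\{i:\Psi_n(i)=1\}+2$ and takes $\hat\beta=\beta_{\hat i}$, while you take $\hat\beta=\beta_{\hat j+1}$; this one-step shift is immaterial since both land within two grid steps of $\beta^*(F)$. Two minor remarks: the index $j^*$ with $F\in\tilde\set(\tau,\beta_{j^*+2},\beta_{j^*},\ldots)$ need not be unique, but your argument only uses existence; and the $(\log\log n)^{3/2}$ factor arises because the constant $E(\eta)$ in the separation threshold of Theorem~\ref{thm:test2} scales like $(\log(1/\eta))^{3/2}$, not merely $\log(1/\eta)$, which is exactly what the paper tracks.
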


Theorem \ref{generaltesting} states that on the model  $\mathcal P_n$ defined in (\ref{newmodel}), it is possible to estimate $\beta^*$ with the rate $1/\log(n)$. On the one hand, this rate seems very slow; but it is the price to pay for considering a rather wide model. On the other hand, as we will see in the next Theorem, it is actually sufficient in order to obtain an adaptive and uniform confidence set. The idea in the proof of Theorem~\ref{generaltesting} is somewhat similar to the successive testing procedure considered in~\citet[Section 2.5]{hoffmann2011adaptive}. 

\begin{theorem}\label{generaltesting2}
Let $\alpha>0$. There exists a sequence $(\rho_n(\beta_i))_{n,i}$ satisfying (\ref{rhonconti})
and such that on the model $\mathcal P_n$ defined in (\ref{newmodel}) over $\mathcal I_b = [b,B]$, there exists an $\alpha$-adaptive and uniform confidence interval $C_n$ for $\tau$.

\end{theorem}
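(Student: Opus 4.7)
The strategy is to use the estimator $\hat\beta$ of $\beta^\ast(F)$ furnished by Theorem \ref{generaltesting} in order to pick a data-driven sample fraction for a Hill-type estimator of $\tau$, and then to center a symmetric confidence interval of radius proportional to $n^{-\hat\beta/(2\hat\beta+1)}$ at this estimate. Since Theorem \ref{generaltesting} ensures $|\hat\beta - \beta^\ast(F)| \le 4\xi(B-b)/\log n$ with high probability, this radius matches the oracle rate $n^{-\beta^\ast(F)/(2\beta^\ast(F)+1)}$ up to a bounded multiplicative factor, which is the key observation that delivers adaptivity.

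\textbf{Construction.} I first invoke Theorem \ref{generaltesting} at level $\alpha/2$ (replacing $\alpha$ by $\alpha/2$ in the grid construction) to produce $\hat\beta$ satisfying
\[
\sup_{F\in\mathcal P_n}\pp_F\bigl(|\hat\beta-\beta^\ast(F)|>4\xi(B-b)/\log n\bigr)\le\alpha/2.
\]
Then I set $\hat k = \lceil n^{2\hat\beta/(2\hat\beta+1)}\rceil$ and take $\hat\tau := \hat\tau_{\hat k}$, the Hill estimator computed with this sample fraction. Under (\ref{SOP0}) with parameter $\beta^\ast(F)$, the standard bias-variance decomposition (already used in the proofs of Theorems~\ref{thm:test1}--\ref{thm:CI2}) supplies a constant $M_1$ such that, on the event $\{|\hat\beta-\beta^\ast(F)|\le 4\xi(B-b)/\log n\}$,
\[
\pp_F\bigl(|\hat\tau-\tau|\le M_1 n^{-\hat\beta/(2\hat\beta+1)}\bigr)\ge 1-\alpha/2.
\]
I finally define $C_n := \bigl[\hat\tau - M_1 n^{-\hat\beta/(2\hat\beta+1)},\, \hat\tau + M_1 n^{-\hat\beta/(2\hat\beta+1)}\bigr]$.

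\textbf{Verification.} Coverage follows from a union bound: with $\pp_F$-probability at least $1-\alpha$ both events above hold, and then $\tau\in C_n$. For the width, fix $\beta\in[b,B]$ and $F\in\set(\tau,\beta,C,C')\cap\mathcal P_n$. By definition of $\beta^\ast$, $\beta^\ast(F)\ge\beta$, so with probability at least $1-\alpha/2$ one has $\hat\beta\ge\beta-4\xi(B-b)/\log n$. Applying the mean value theorem to $u\mapsto u/(2u+1)$ on $[b,B]$ gives
\[
\Bigl|\tfrac{\hat\beta}{2\hat\beta+1}-\tfrac{\beta}{2\beta+1}\Bigr|\le \tfrac{|\hat\beta-\beta|}{(2b+1)^2}\le\tfrac{4\xi(B-b)}{(2b+1)^2\log n},
\]
so that $n^{-\hat\beta/(2\hat\beta+1)}\le e^{4\xi(B-b)/(2b+1)^2}\cdot n^{-\beta/(2\beta+1)}$. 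Setting $M := 2 M_1 \exp\bigl(4\xi(B-b)/(2b+1)^2\bigr)$ yields $|C_n|\le M n^{-\beta/(2\beta+1)}$ with probability at least $1-\alpha$, which is precisely (\ref{def2.2.1}) in Definition \ref{def:CI}, while the coverage bound is (\ref{def2.2.3}).

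\textbf{Main obstacle.} The most delicate step is justifying the uniform concentration of $\hat\tau_{\hat k}$ at the data-dependent sample fraction $\hat k$ under the enlarged second-order class (\ref{SOP0}), rather than the exact Hall condition. This calls for a uniform-in-$k$ deviation inequality for Hill's estimator across a range that contains $\hat k$, combined with decoupling from $\hat\beta$; once that control is extracted from the arguments developed for Theorems~\ref{thm:test2}--\ref{thm:CI2}, the remaining manipulations---in particular the conversion of the $O(1/\log n)$ discrepancy between $\hat\beta$ and $\beta^\ast(F)$ into a constant factor in the width---are routine.
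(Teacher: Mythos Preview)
Your proposal is correct and follows essentially the same route as the paper: estimate $\beta^\ast(F)$ via Theorem~\ref{generaltesting}, center an interval of radius $\sim n^{-\hat\beta/(2\hat\beta+1)}$ at a Hill-type estimator with the corresponding sample fraction, and convert the $O(1/\log n)$ error on $\hat\beta$ into a bounded multiplicative constant in the width via the Lipschitz property of $u\mapsto u/(2u+1)$. The paper makes one small technical choice you do not: instead of using $\hat\beta$ directly, it works on the grid and uses $\beta_{\hat i+1}$ (one index below $\hat i$), which guarantees $\beta_{\hat i+1}\le\beta_{i^\ast}\le\beta^\ast(F)$ on the high-probability event $|\hat i-i^\ast|\le 1$; this cleanly disposes of the bias side of the Hill deviation bound without needing to argue that a possible overshoot $\hat\beta>\beta^\ast(F)$ is harmless, and it reduces the ``decoupling from $\hat\beta$'' you flag to an explicit enumeration over the three cases $\hat i\in\{i^\ast-1,i^\ast,i^\ast+1\}$ together with a union-bound correction in $c_1(\alpha/(12M_n))$.
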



The model~\eqref{newmodel} is not very easy to interpret as such, but it is actually a rather general model. In particular, consider a class $\mathcal{G}$ of distributions defined for $0<C_1<C'$ and $D>0$ as
\begin{align}
\mathcal G &:= \bigcup_{\beta \in \mathcal [b ,B - 2(B-b)/M_n], \tau \in \mathcal I_1, C \in \mathcal I_2}\Big\{F \in \mathcal S(\tau,\beta,C,C'): \nonumber\\ 
&C_1 x^{-\tau(\beta+1)} \leq |1-F(x) - C x^{-\tau}| \leq C' x^{-\tau(\beta+1)} \hspace{2mm}\mathrm{for}\hspace{2mm}x>D  \Big\}.\label{eq:self}
\end{align}
Then it is possible to prove that $\mathcal{G}$ is included in the set (\ref{newmodel}) when we pick $\xi$ large enough for a sufficiently large $n$ as shown below.
\begin{lemma}\label{lem:self}
Let $(\rho_n(\beta_i))_{n,i}$ be a sequence satisfying (\ref{rhonconti}). Then if $\xi$ is chosen such that $ \big( C_1 - 2C'e^{-\xi (B-b)/(2(2B+1))}\big) > 0$, then for $n$ large enough, we have
$$\mathcal G \subset \mathcal P_n.$$
\end{lemma}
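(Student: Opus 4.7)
The plan is to show that each $F\in\mathcal{G}$, specified by parameters $(\tau,\beta,C)$, lies in a single summand of the union defining $\PP_n$ in~(\ref{newmodel}), namely $\tilde\set(\tau,\beta_{i+2},\beta_i,C,C',\rho_n(\beta_{i+2}))$ for a suitably chosen grid index~$i$. Because $\beta\in[b,B-2(B-b)/M_n]=[\beta_{M_n},\beta_2]$ and the half-open intervals $[\beta_{i+2},\beta_{i+1})$ for $i=0,\ldots,M_n-2$ partition $[\beta_{M_n},\beta_1)$, a unique such $i$ exists with $\beta\in[\beta_{i+2},\beta_{i+1})$. The inner membership $F\in\set(\tau,\beta_{i+2},C,C')$ then follows from $\beta\geq\beta_{i+2}$ and the nestedness of the classes $\set(\tau,\cdot,C,C')$ already exploited in the two-points setting.

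The substance lies in proving $\|F-\set(\tau,\beta_i,C,C')\|_{\infty,\tau}\geq \rho_n(\beta_{i+2})$. I would start from an arbitrary candidate $F_0\in\set(\tau,\beta_i,C,C')$; for every $x>D$, the reverse triangle inequality combined with the two-sided envelope built into~(\ref{eq:self}) gives
\[
x^\tau|F(x)-F_0(x)|\geq x^\tau|1-F(x)-Cx^{-\tau}|-x^\tau|1-F_0(x)-Cx^{-\tau}|\geq C_1 x^{-\tau\beta}-C' x^{-\tau\beta_i}.
\]
I would then evaluate this at the universal scale $x_n$ defined by $x_n^\tau=n^{1/(2(2B+1))}$. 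Since $\beta<\beta_{i+1}$ by construction, $\beta_i-\beta>\beta_i-\beta_{i+1}=(B-b)/M_n\geq\xi(B-b)/\log n$, and the identity $n^{-a/\log n}=e^{-a}$ yields
\[
x_n^{-\tau(\beta_i-\beta)}=n^{-(\beta_i-\beta)/(2(2B+1))}\leq e^{-\xi(B-b)/(2(2B+1))}.
\]
The hypothesis $C_1-2C'e^{-\xi(B-b)/(2(2B+1))}>0$ therefore forces $C' x_n^{-\tau\beta_i}<(C_1/2)x_n^{-\tau\beta}$, and the display above improves to $x_n^\tau|F(x_n)-F_0(x_n)|>(C_1/2)\,n^{-\beta/(2(2B+1))}$.

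It remains to compare this polynomial lower bound with $\rho_n(\beta_{i+2})$, which by~(\ref{rhonconti}) is of order $\log(n)(\log\log n)^{3/2}\,n^{-\beta_{i+2}/(2\beta_{i+2}+1)}$. Using $\beta\leq\beta_{i+2}+(B-b)/M_n$, $\beta_{i+2}\in[b,B]$ and the elementary bound $4B+1-2\beta_{i+2}\geq 2B+1$, a short algebraic computation gives
\[
\frac{\beta_{i+2}}{2\beta_{i+2}+1}-\frac{\beta}{2(2B+1)}\geq \frac{1}{2(2B+1)}\Bigl(b-\frac{B-b}{M_n}\Bigr),
\]
and the right-hand side is bounded below by $b/(4(2B+1))>0$, uniformly in $i$, once $n$ is sufficiently large. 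Hence $n$ raised to this positive exponent eventually dominates the polylogarithmic coefficient of $\rho_n(\beta_{i+2})$, so that $(C_1/2)n^{-\beta/(2(2B+1))}\geq \rho_n(\beta_{i+2})$; taking the infimum over $F_0$ yields the required separation and places $F$ in $\tilde\set(\tau,\beta_{i+2},\beta_i,C,C',\rho_n(\beta_{i+2}))\subseteq \PP_n$. The main obstacle is the tuning of $x_n$: the exponent $1/(2(2B+1))$ is chosen precisely so that $x_n^{-\tau(\beta_i-\beta)}$ collapses to $e^{-\xi(B-b)/(2(2B+1))}$, matching the hypothesis on $\xi$ verbatim, while $x_n^{-\tau\beta}$ decays slowly enough to keep a strictly polynomial gap over $n^{-\beta_{i+2}/(2\beta_{i+2}+1)}$ that absorbs the polylogarithmic factor in $\rho_n(\beta_{i+2})$.
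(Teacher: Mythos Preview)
Your argument is correct and follows the same skeleton as the paper's proof: locate the grid index containing $\beta$, use the lower envelope $C_1x^{-\tau(\beta+1)}$ together with the reverse triangle inequality to bound the separation from $\set(\tau,\beta_i,C,C')$ below by $C_1x^{-\tau\beta}-C'x^{-\tau\beta_i}$, and then evaluate at a single well-chosen point. The difference lies in that choice of point. The paper takes $x$ at the ``natural'' scale $x\approx n^{1/(\tau(2\beta+1))}$, but with a logarithmic correction $x=(n/(\log(n)^2\log\log(n)^{3/2})^{(2\beta+1)/\beta})^{1/(\tau(2\beta+1))}$ engineered so that $x^{-\tau\beta}$ exceeds $\rho_n(\beta_{i^*})$ by exactly an extra $\log n$; this forces a somewhat delicate bound $x^{-\tau\xi(B-b)/(2\log n)}\leq 2e^{-\xi(B-b)/(2(2B+1))}$ (whence the factor~$2$ in the hypothesis on $\xi$). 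You instead pick the much smaller $x_n$ with $x_n^\tau=n^{1/(2(2B+1))}$; this makes $x_n^{-\tau(\beta_i-\beta)}$ collapse \emph{exactly} to $e^{-\xi(B-b)/(2(2B+1))}$ and yields a separation $(C_1/2)n^{-\beta/(2(2B+1))}$ that beats $\rho_n(\beta_{i+2})$ by a fixed power of $n$ rather than a logarithm. Your route is cleaner and makes the role of the constant in the hypothesis more transparent; the paper's route stays closer to the minimax scale and shows that the separation need only exceed $\rho_n$ by a $\log n$ factor, which is slightly more informative but not needed for the lemma as stated.
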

For the constant in the grid points, large $\xi$ means that we use coarse grids so that we remove smaller regions of the parameter space. That is, large $\xi$ corresponds to the large model so that it yields a wide confidence interval for $\tau$. Nevertheless, this large choice of $\xi$ will soften the requirements on the choice of $C_1$ and $C'$ such that $\mathcal{G}$ is contained in the model (\ref{newmodel}).
This set $\mathcal{G}$ is actually larger than the set of distributions verifying the third order condition in the literature \citep[e.g.][]{gomes2012}, i.e.~the set of distributions that verify for $\gamma>0$
\begin{equation}\label{comp:model1}
|1-F(x) - C x^{-\tau} - C_1 x^{-\tau(\beta+1)} | \leq C' x^{-\tau(\gamma + \beta+1)},
\end{equation}
since convergence in the tail to a distribution of the form $1-F(x) - C x^{-\tau} - C_1 x^{-\tau(\beta+1)}$ is not imposed. The set $\mathcal{G}$ is also larger, when $n$ grows to $\infty$, than the set of distributions that satisfy the exact Hall condition,
\begin{equation}\label{exachall}
 1-F(x) = Cx^{-\tau} + \tilde C x^{-\tau(\beta+1)} + o(x^{-\tau(\beta+1)} ).
\end{equation}
This implies in particular that our model $\mathcal P_n$ is much larger than usual models where adaptive estimators or adaptive and uniform confidence intervals are derived. In fact, $\mathcal{G}$ can be understood as an analogue to the set of self-similar functions \citep[see Condition 3 of][]{gine2010confidence}.

\section{Discussion}\label{sec:discussion}

\subsection{Construction of the test statistic and the confidence interval}

 The test statistics (\ref{teststat}) involves the empirical distribution of data and estimators of $\tau$ and $C$. For instance, in the two classes testing, we can use Hill's estimate and the associated estimate of $C$ with the sample fraction corresponding to the null (if $\Psi_n = 0$) or alternative hypotheses (if $\Psi_n = 1$). Another option is to consider an adaptive estimate of $\tau$ and construct an adaptive confidence interval centred on this estimate. We illustrate the practical construction of the estimate we propose in Algorithm~\ref{fig:sampling2}, in the case of a continuum of parameters $\beta$ (Theorem~\ref{generaltesting2}).


\begin{algorithm}[t]
\caption{Practical construction of the confidence interval for $\tau$}\label{fig:sampling2}

\begin{algorithmic}
\STATE {\bf Parameters:} For given $B, b(\geq 0.5), C', \alpha, \xi$, we let $\beta_i = B - \frac{i(B-b)}{M_n}$ where $M_n :=\lfloor \log (n)/\xi \rfloor$, and
$$\rho_n(\beta_i) = (\log \log (n))n^{-\beta_i / (2\beta_i+1)}.$$
\vspace{-0.5cm}
\FOR{$i:=M_n:2$}
\STATE {\bf Estimates:} Estimate $\tau,C$ by $\hat \tau_i, \hat C_i$ with the sample fraction associated to $\beta_i$ (for instance, for $\hat \tau_i$, we can use the inverse of Hill's estimator computed with the $\lfloor n^{2\beta_i/(2\beta_i+1)} \rfloor$ largest samples, or the adaptive estimate~\citep{carpentierkim}; for $\hat C_i$, we can use $n^{1/(2\beta_i+1)} \sum_{i=1}^n \one_{\{X_i > n^{1/(\hat \tau_i(2\beta_i+1))}\}}/n$).  
\STATE {\bf Set} $\hat B_i = n^{1/\vartheta_i}$ where $\vartheta_i = \Big(\hat \tau_i + (\log \log(n))n^{-\frac{\beta_i}{2\beta_i+1}}\tilde c_1(\alpha,i)\Big)(2\beta_i+1)$ with $\tilde c_1(\alpha,i) = q_{1-\alpha/2} \hat \tau_i$ where $q_{1-\alpha/2}$ is the $\alpha/2$ quantile of a $\mathcal N(0,1)$ \citep[following][combined with a delta method]{cheng2001,haeusler2007}
\STATE  {\bf Set} $\hat p_x := \frac{1}{n} \sum_{i=1}^n \one_{\{ X_i > x \}}$
\STATE  {\bf Set} $T_n(i) = \sup_{x \leq \hat B_i} \Big(|x^{\hat \tau_i} \hat p_x - \hat C_i| - C' x^{-\hat \tau_i\beta_{i-2}}\Big)$.
\STATE  {\bf Set} $\Psi_n(i) = \one\{ T_n(i) \geq \rho_n(\beta_i)/2\}$.
\IF{$\Psi_n(i) = 1$}
\STATE  {\bf Set} $d(C_n) = \tilde c_1(\alpha,i) (\log \log(n)) n^{-\beta_i/(2\beta_i+1)}$.
\STATE  {\bf Set} $\tilde \tau = \hat \tau_i$ 
\STATE  {\bf Set} $C_n = [\tilde \tau - d(C_n), \tilde \tau + d(C_n)]$.
\ENDIF
\ENDFOR
\STATE  {\bf Return} $C_n$.
\vspace{2pt}
\end{algorithmic}
\end{algorithm}

The choice of $B$ and $b$  depends on the belief of the user about the magnitude of the parameter $\beta$, i.e.~of the models one wishes to consider. The parameter $\xi$ corresponds to the desired precision on $\beta$. The choice of the constant $\tilde c_1(\alpha,i)$ is in practice made following methods \citep[]{cheng2001, haeusler2007} combined with a delta-method, and is thus chosen lower than the bound derived in the proofs of this paper, which is conservative. Finally, $\alpha$ corresponds to the desired coverage of the confidence set. These parameters can be fixed arbitrarily according to the user's preference. However, there is no simple answer for how to choose the constant $C'$. We recall that the larger $C'$ is, the larger the class of $\beta-$second order Pareto distributions becomes. This implies that the theory would be more complete if we could handle the class of $\beta-$second order Pareto distributions with the second order condition for any $C'<\infty$. But then, a larger $C'$ would yield a larger separation zone $\rho_n$ as well as the necessity of considering wider confidence intervals for $\tau$. We believe that the choice of the upper bound for $C'$ should depend on the specific problem considered, and also on whether one is interested in asymptotic rates (in which case $C'$ has to slowly go to infinity with $n$), or in the final width of the confidence set for not-too-small tail probabilities (in which case $C'$ has to be of reasonably small magnitude). A reasonable heuristic for fixing $C'$ is as follows. For each candidate $\beta_i$, we fix
$$C':= \sup_{1\leq x \leq \hat B_i} |x^{\hat \tau_i} \hat p_x - \hat C_i|(1+ c_n)+0.2,$$
where $c_n$ is a confidence bound as e.g.~$c_n = \sqrt{\log(1/\alpha)} n^{-\beta_i/(2\beta_i+1)}$ and $0.2$ makes $C'$ not-too-small. This heuristic is efficient, in particular if the model error is maximized for small $x$, which is often the case in practice, and in usual parametric heavy tailed models (Fr\'echet, Student, etc).

The choices of $\beta_i$, $\hat C_i$ from the threshold $\lfloor n^{2\beta_i/(2\beta_i+1)}\rfloor$ follow from the theory, and with these choices, the results are minimax-optimal up to a  $\log(n)\big(\log\log(n)\big)^{3/2}$ factor (see Theorem \ref{thm:CI2} and \ref{generaltesting}). Ideally, to obtain better constants in the bounds,  one would like to tune the constants in all these quantities. In order to do this, however, it is necessary to estimate the model bias (i.e.~the deviance with respect to Pareto distribution) precisely. This is possible in more restrictive models than ours \citep[e.g.][]{gomes2012}, since they assume the third order condition for the model bias (definite shape order plus a negligible term). It is thus possible to estimate this model bias with few parameters from a finite sample size using the information regarding the bias whose shape is guaranteed to hold on the entire tail. 

In contrast, we consider a setting which is much broader than this one, and where the model bias is only upper bounded, which implies that the bias in our setting does not have any definite shape. It is thus not possible to infer from a finite sample the shape of the (far right) tail of the model bias, which makes it difficult to tune the constants without having an oracle knowledge of the distribution. 
Although we agree on the practical impact for more refined tuning, we believe that it is not possible in this broader model to tune exactly without a priori knowledge of the problem; this is the price to pay for a broader model, which is particularly relevant for discrete distributions.

\subsection{Distributions that are in $\mathcal P_n$ but not in the models~\eqref{comp:model1} and~\eqref{exachall}}
\label{subsec:largermodel}


We have shown in Section \ref{subsec:cicontinuum}  that our model $\mathcal{P}_n$ (c.f. Equation~\eqref{newmodel}) is strictly larger than the usual models such as \eqref{comp:model1} and~\eqref{exachall} as well as the set of self-similar distributions (see Equation~\eqref{eq:self}). Then it is of interest to see if there are useful models in applications that belong to our model, but that are out of the more restrictive classes previously considered. To that end, we consider the class of heavy tailed distributions that take only a countable set of values, i.e.~discrete distributions. There are many examples of such discrete heavy tailed distributions; either for ``natural" reasons (e.g.~the distributions of wages in a population) or for rounding issues (e.g. hydrology measures).
Thus for practical applications, it is very important to consider these distributions. 

%
We claim here that many discrete distributions that are in the domain of attraction of a Fr\'echet distribution are \textit{not} in the models~\eqref{comp:model1} and~\eqref{exachall}, while some of them are in our model $\mathcal P_n$. Let us write $\mathcal H$ for the distributions that satisfy Equation~\eqref{exachall} (and who thus also satisfy~\eqref{comp:model1}). The following lemma states that distributions which are discretized (with equally spaced grids) versions of distributions in $\mathcal{H}$ are not in $\mathcal{H}$. 
\begin{lemma}\label{lem:discdist}
Let $F \in \mathcal H$ with parameters $\tau, \beta>0$. Let $a_0 \geq 1, t>0$, and set $a_i = a_0 + it$ for any $i \in \mathbb N$. Let $\tilde F$ be the discretized version of $F$ according to $(a_j)_j$, i.e. for all $x\geq 1$, $\tilde F(x) = F(a_i)$ where $a_i$ is the largest element of $\{a_j, j\in \mathbb N\}$ smaller than or equal to $x$.

If $\beta>1/\tau$, then 
$$\tilde F\not \in \mathcal H.$$
On the other hand, if $\min(\beta, 1/\tau) \in (b,B)$, then for $n$ large enough,
$$\tilde F\in \mathcal P_n,$$
where $\mathcal P_n$ is defined in~\eqref{newmodel}. 
\end{lemma}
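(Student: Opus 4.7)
The plan is to run both claims through a single two-scale expansion. For $x$ large, write $a_j$ for the largest grid point not exceeding $x$ and set $r(x):=x-a_j\in[0,t)$. A first-order Taylor expansion of $a_j^{-\tau}=x^{-\tau}(1-r(x)/x)^{-\tau}$ together with the Hall expansion of $1-F(a_j)$ gives, as $x\to\infty$,
\[
1-\tilde F(x)-Cx^{-\tau}\;=\;C\tau\, r(x)\, x^{-\tau-1}\;+\;\tilde C\, x^{-\tau(\beta+1)}\;+\;o(x^{-\tau-1})+o(x^{-\tau(\beta+1)}).
\]
The crucial feature is that $r$ is bounded in $[0,t)$, vanishes at every grid point $a_j$, and tends to $t$ as $x\to a_{j+1}^{-}$; in particular it has no limit at infinity. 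This oscillation is what drives both claims.

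For the first claim, suppose towards contradiction that $\tilde F\in\mathcal H$ with parameters $(\tilde\tau,\tilde\beta,\tilde C_0,\tilde C_1)$ and $\tilde C_1\neq 0$. Matching leading order forces $\tilde\tau=\tau$, $\tilde C_0=C$, so the Hall condition requires $x^{\tau(\tilde\beta+1)}(1-\tilde F(x)-Cx^{-\tau})\to\tilde C_1$. Under $\beta>1/\tau$, $x^{-\tau(\beta+1)}=o(x^{-\tau-1})$, so the displayed expansion reduces to $C\tau\,r(x)\,x^{-\tau-1}(1+o(1))$ and the rescaled quantity becomes $C\tau\, r(x)\, x^{\tau\tilde\beta-1}(1+o(1))$. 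Evaluate along the two subsequences $x_k=a_k$ (where $r=0$) and $y_k=a_k+t/2$ (where $r=t/2$) as $k\to\infty$: for $\tilde\beta<1/\tau$ both subsequences tend to $0$, which forces $\tilde C_1=0$; for $\tilde\beta=1/\tau$ the limits are $0$ and $C\tau t/2$, which differ; for $\tilde\beta>1/\tau$ the second diverges. A contradiction in every case.

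For the second claim, set $\beta^\star:=\min(\beta,1/\tau)\in(b,B)$. The displayed expansion immediately yields $|1-\tilde F(x)-Cx^{-\tau}|\le K(\tilde F)\, x^{-\tau(\beta^\star+1)}$ for $x$ large, and the bounded range of small $x$ is absorbed into the constant, so $\tilde F\in\mathcal S(\tau,\beta^\star,C,C')$ provided $C'$ is taken large enough. Pick $i=i(n)$ with $\beta_{i+2}\le \beta^\star<\beta_i$; then $\tilde F\in\mathcal S(\tau,\beta_{i+2},C,C')$. For the required separation, choose $x_n=a_{j_n}+t^{-}$ with $a_{j_n}$ to be selected, and observe
\[
|x_n^\tau(1-\tilde F(x_n))-C|\;\ge\; c_{\tilde F}\, x_n^{-\tau\beta^\star}(1+o(1)),
\]
with $c_{\tilde F}=C\tau t$ when $\beta^\star=1/\tau$ and $c_{\tilde F}=|\tilde C|$ when $\beta^\star=\beta$. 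Any $F_0\in\mathcal S(\tau,\beta_i,C,C')$ satisfies $|x_n^\tau(1-F_0(x_n))-C|\le C' x_n^{-\tau\beta_i}$, so the triangle inequality gives $\|\tilde F-F_0\|_{\infty,\tau}\ge c_{\tilde F} x_n^{-\tau\beta^\star}/2-C' x_n^{-\tau\beta_i}$. Optimising over $x_n$ and using $\beta_i-\beta^\star\le 2\xi(B-b)/\log n$ produces a lower bound of the form $n^{-\kappa/\xi}/\log n$ for an explicit constant $\kappa=\kappa(\tilde F,C',\beta^\star)$; choosing $\xi$ sufficiently large makes this exceed $\rho_n(\beta_{i+2})$ for $n$ large, placing $\tilde F$ in $\tilde{\mathcal S}(\tau,\beta_{i+2},\beta_i,C,C',\rho_n(\beta_{i+2}))\subset\mathcal P_n$.

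The delicate step is the separation bound in Part 2: because the grid on $[b,B]$ has spacing $\sim 1/\log n$, the class $\mathcal S(\tau,\beta_i,C,C')$ converges to $\mathcal S(\tau,\beta^\star,C,C')$ as $n\to\infty$, and one must verify that the distance of $\tilde F$ to this enlarging class does not shrink faster than the polynomially small $\rho_n(\beta_{i+2})$. This is the origin of the exponential penalty $n^{-\kappa/\xi}$ above, and it is precisely the place where $\xi$ must be taken sufficiently large, mirroring the corresponding hypothesis of Lemma~\ref{lem:self}.
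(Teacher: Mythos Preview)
Your proposal is correct and follows essentially the same route as the paper: both derive the two-scale expansion $1-\tilde F(x)-Cx^{-\tau}=C\tau\,r(x)\,x^{-\tau-1}+\tilde C\,x^{-\tau(\beta+1)}+o(\cdot)$ and exploit the oscillation of $r(x)=x-a_j$, using the grid/interior subsequences for Claim~1 and then a separation lower bound that reduces to the Lemma~\ref{lem:self} argument (with $\xi$ large) for Claim~2. Your treatment of Claim~1 is in fact slightly more careful than the paper's, since you explicitly rule out all candidate values of $\tilde\beta$ rather than only the natural one $\tilde\beta=1/\tau$.

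One small point to watch in Claim~2: your condition $\beta_{i+2}\le\beta^\star<\beta_i$ admits two consecutive choices of $i$, and your optimization yielding the $n^{-\kappa/\xi}$ bound implicitly requires $\beta_i-\beta^\star$ to be bounded \emph{below} by a multiple of $(B-b)/M_n$, not just above. The paper secures this by separating from $\mathcal S(\tau,\beta_{i^*-2},C,C')$, which forces $\beta_{i^*-2}-\beta^\star\ge (B-b)/M_n$; in your notation this corresponds to taking the smaller of the two admissible $i$ (equivalently the larger $\beta_i$). With that choice your argument goes through exactly as in the paper's reduction to Lemma~\ref{lem:self}.
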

The proof of this lemma is in Subsection~\ref{proof:selfi}.

In order to illustrate this lemma, we consider the distribution
\begin{equation}\label{disc_paretomodel}
\tilde F: x \in [1, \infty] \rightarrow 1-\lfloor x\rfloor^{-\tau},
\end{equation}
which puts mass only on integers, and is equal to a $\tau-$Pareto distribution on these integers. Clearly, this distribution is such that $1 - \tilde F(x) = x^{-\tau} + o(x^{-\tau})$ so it is in the domain of attraction of a Fr\'echet distribution.
Lemma~\ref{lem:discdist} implies that this distribution does not belong to $\mathcal H$, i.e.~to the models~\eqref{comp:model1} and~\eqref{exachall}, for any constants $\beta, C, C_1, C', \tilde C$. This reveals that even simple heavy tailed discrete distributions are not in the usually considered models~\eqref{comp:model1} and~\eqref{exachall}. On the other hand, this distribution is in the model $\mathcal P_n$ defined in Equation~\eqref{newmodel} by Lemma~\ref{lem:discdist}. Lemma~\ref{lem:discdist} holds for many other discrete distributions as well as this simple discrete distribution.

\subsection{Extension to the case of a regularly varying second-order term} 
Condition~\eqref{paretolike} is a necessary and sufficient condition for a distribution $F$ to be in the domain of attraction of a Fr\'echet distribution. However, as mentioned in the introduction, this model is too broad to allow for uniform rates of convergence for an estimate of $\tau$.

Setting~\eqref{SOP0} was introduced to characterize a set of parameters in which it is possible to estimate $\tau$ at a uniform rate. This rate depends on the second order parameter $\beta$ that characterizes the maximal distance between a distribution included in (\ref{SOP0}) and the exact Pareto distribution.

As an alternative to the setting (\ref{SOP0}), we might consider the set of distributions $F$ which satisfy 
\begin{equation}\label{slowlyvarying}
1-F(x) = Cx^{-\tau} (1 + R(x)),
\end{equation}
where $|R|$ is upper bounded by a $\tau \beta$-regularly varying function at $+\infty$ \citep[see Lemma 2.3.2 of][]{dekkers1993optimal}. 

From the form of (\ref{slowlyvarying}), we know that this alternative setting is an extension of the \textit{exact} Hall condition (\ref{exachall}), i.e.~$R(x) = \tilde C x^{-\tau\beta} + o(x^{-\tau\beta} )$. 
In addition, by definition of regularly varying functions, if a function verifies Equation (\ref{slowlyvarying}) (where $R$ is $\tau \beta$-regularly varying), then it verifies 
%
\begin{equation}\label{cond:sv}
|R(x)| \leq D x^{-\tau \beta + \phi(x)},
\end{equation}
where $D$ is a constant, and $\phi$ is a function such that $\lim_{x \rightarrow \infty}\phi(x) = 0$. This condition is slightly weaker than Equation (\ref{slowlyvarying}), but these two conditions are very much related.

The condition (\ref{SOP0}) we consider in this paper for the two points test and confidence interval is weaker than the exact Hall condition~\eqref{exachall} (although it is stronger than (\ref{cond:sv}) since it requires $\phi(x) = 0$). Indeed, condition~\eqref{exachall} implies
$$\lim_{x \rightarrow \infty} \frac{1-F(x) - Cx^{-\tau}}{ x^{-\tau(1+\beta)}} = \tilde C,$$
while our condition~\eqref{SOP0} does not impose the existence of this limit; it gives just an upper bound on the distance to the exact Pareto distribution. Moreover, we have proved that up to a $\log(n)$ factor, the model $\mathcal P_n$ that we derive from the condition (\ref{SOP0}) is the largest possible in a minimax sense for studying the question of uniform and adaptive confidence intervals. In the case of a continuum of parameters $\beta$, the model (\ref{newmodel}) that we define for constructing adaptive and uniform confidence intervals contains the set of distributions that verify the exact Hall condition. It is larger (see Lemma~\ref{lem:self}) than the set \eqref{eq:self}, which clearly includes the set of distributions in the exact Hall model. 
We emphasize that most papers considering estimation of $\beta$ actually consider a more restrictive setting than the exact Hall condition~\eqref{exachall}, as e.g.~the \textit{third order condition} \citep[see][for instance]{gomes2012}.


Before considering our method in the more general setting (\ref{cond:sv}), we want to make an important remark. In the setting (\ref{cond:sv}), $\beta$ does not characterize the uniform rates of convergence for the estimator of $\tau$ anymore \citep[see][]{drees1998optimal,drees2001}. Indeed, if $\phi$ converges to $0$ too slowly, the distance between $F$ and the exact Pareto distribution can become too large, and the rate $n^{-\frac{\beta}{2\beta+1}}$ for estimating $\tau$ is then out of reach. The main reason why we are interested in testing $\beta$ in the model~\eqref{SOP0} is because in this model, $\beta$ characterizes the complexity of the model to which $F$ belongs. More specifically, $\beta$ controls the bias to an exact Pareto distribution, and yields the estimation error being $n^{-\frac{\beta}{2\beta+1}}$. 
Also, through this complexity, knowing $\beta$ should allow us to (a) compute the optimal sample fraction for obtaining optimal estimators of $\tau$ and (b) have an idea of the precision of these estimators so that we can build a confidence interval for $\tau$. However, under condition~\eqref{cond:sv}, the role of $\beta$ is less clear, in particular if $\phi$ decays very slowly to $0$---actually, as soon as $\lim_{x \rightarrow \infty} |\log(x)\phi(x)| = \infty$. In this case, the minimax-optimal rate of estimation for $\tau$ and the optimal width of the confidence interval is \textit{not} $n^{-\frac{\beta}{2\beta+1}}$, but it is larger \citep[see][]{drees1998optimal, drees2001}. Thus, testing whether $\beta=\beta_0$ or $\beta=\beta_1$, or estimating $\beta$ under the general model (\ref{cond:sv}) does not provide an answer for confidence statements.

To closer look at the construction of uniform confidence interval in this alternative setting (\ref{cond:sv}), let us consider the wider model 
$$|R(x)| \leq x^{-\tau \beta + \phi(x)}.$$
In the extreme case $|R(x)| = x^{-\tau \beta + \phi(x)}$ and if $\lim_{x \rightarrow \infty} |\log(x)\phi(x)|=\infty$, as mentioned above, $\beta$ is not driving the rate of convergence of an optimal estimate of $\tau$ and the length of the associated confidence interval. The quantity that is driving the optimal rate can depend on $n$ and is defined as follows. Let $n>0$ be the number of samples. Let $x_n$ be the point such that it equalizes the bias and the standard deviation of an estimate \citep[as the one in][]{carpentierkim} computed only with the samples larger than this point, i.e.~$x_n$ be such that
$$\sqrt{\frac{1}{(1-F(x_n))n}} = |R(x_n)|.$$
Let $\beta_n$ be such that $n^{-\frac{\beta_n}{2\beta_n+1}}$ is equal to the risk of this estimate (or equivalently, let $\beta_n$ be such that $x_n = n^{\frac{1}{\tau(2\beta_n+1)}}$). Then we define $\beta_n^* = \inf_{N\geq n} \beta_N$. This $\beta_n^*$ is actually the quantity of importance (and not $\beta$): it is the quantity that characterizes the performance of the estimate for a fixed $n$. And although $\beta_n^* \rightarrow \beta$ as $n\rightarrow \infty$, the rate of convergence of this quantity can be arbitrarily slow depending on how slowly $\phi(x)$ goes to $0$---when this convergence is too slow and $|\log(x)\phi(x)| \rightarrow \infty$, the rate is \textit{not} $n^{-\frac{\beta}{2\beta+1}}$. Hence, in this setting and for a fixed $n$, one does not want to test $\beta$ (or do confidence intervals according to $\beta$), but we want to test the quantity $\beta_n^*$ (and build confidence intervals according to $\beta_n^*$). Actually, the estimate defined in~\cite{carpentierkim} computed with all the points larger than $n^{\frac{1}{\tau(2\beta_n^*+1)}}$ attains the optimal risk $n^{-\frac{\beta_n^*}{2\beta_n^*+1}}$ \citep[as the one defined in][]{drees1998optimal}. And if one does not know $\beta$ (and also $\beta_n^*$), by modifying slightly the proof of Theorem 3.6 in the paper~\citep{carpentierkim}, one can prove that the adaptive estimate $\hat \tau$ from the paper~\citep{carpentierkim} satisfies that with high probability,
$$|\hat \tau - \tau| = O\Big(\big(\frac{n}{\log\log(n)}\big)^{-\frac{\beta_n^*}{2\beta_n^*+1}}\Big).$$

There is then an easy extension of our procedure to test whether  $\beta_n^*$ equals $\beta_0$ or $\beta_1$. The only change with respect to the procedure we proposed is in the definition of the test statistic $T_n$ introduced in Section 5.1, which should be defined as
$$\tilde T_n = \sup_{x: [n^{\frac{1}{\tau(2\beta_0 + 1)}}, n^{\frac{1}{\tau(2\beta_1 + 1)}}]} \Big(|x^{ \tau} \hat p_x - C| - C' x^{-\tau\beta_0}\Big),$$
i.e.~the distance to the model is considered only for large enough $x$. This test will be consistent if the alternative hypothesis is restricted to distributions which are at least $n^{-\frac{\beta_1}{2\beta_1 + 1}}$ away from distributions such that $\beta_n^* = \beta_0$. The idea behind this change is to test deviations from the exact Pareto distribution only for large $x$ so that $R(x)$ is small enough under the null hypothesis (under the condition~\eqref{SOP0}, $R(x)$ is always properly bounded for any $x$). To extend the case where $\tau$ is unknown, the test statistic should be replaced by an estimate similar to (\ref{teststat}) as in the proof of Theorems~\ref{thm:test2}.

\section{Numerical experiments}\label{sec:numerical}

\subsection{Experiments on continuous distributions}

We consider usual parametric extreme value distributions:
\begin{itemize}
\item $\tau$-Pareto distributions on $[1, \infty)$ with $\tau \in \{1,2\}$. We remind that the $\tau$-Pareto distribution $F_{\tau}^P(x) =1- x^{-\tau}$  is included in $\mathcal{S}(\tau, \infty, 1,0)$.
\item $\tau$-Fr$\acute{\text{e}}$chet distributions on $[0, \infty)$ with $\tau \in \{1,2\}$. The $\tau-$Fr$\acute{\text{e}}$chet distribution $F_{\tau}^F(x) =\exp(-x^{-\tau})$ is included in $\mathcal{S}(\tau, 1, C,C')$ for some constants $C,C'$.
\item $\tau$-Student distributions with $\tau$ degrees of freedom on $\rr$ with $\tau \in \{1,2\}$. 
This distribution is in $\mathcal{S}(\tau, 1/(2\tau), C,C')$ for some constants $C,C'$. \item Cauchy distribution on $\mathbb{R}$. The standard  Cauchy distribution $f(x) = \frac{1}{\pi} \frac{1}{1+x^2}$ is included in $\mathcal{S}(1, 2, C,C')$ for some constants $C,C'$.
\end{itemize}
The last two distributions are symmetric at 0 and defined on $\mathbb R$, so we consider the absolute value of the samples so that Hill's estimate with large sample fraction still exists ($\tau$ and $\beta$ do not change).

We compute confidence sets around $1/\tau$ (not around $\tau$), since most empirical studies in the literature focus on $1/\tau$, which enables us to compare more easily to these results. We follow the algorithm in Algorithm~\ref{fig:sampling2} for computing $1/\hat \tau_i$ for each $\beta_i$ (using Hill's estimate with the $\lfloor n^{\frac{2\beta_i}{2\beta_i+1}}\rfloor$ largest samples). For the estimation of $1/\tau$, the corresponding $\tilde c_1(\alpha,i)$ is $q_{1-\alpha/2}\widehat{1/ \tau_i}$, but not $q_{1-\alpha/2}\hat \tau_i$ \citep[see][]{cheng2001, haeusler2007}. We fix $C'$ according to the heuristic discussed in Section~\ref{sec:discussion}, and consider $[b,B] = [0.5,10]$, $\xi = \log(n)/95$ and $\alpha = 0.05$. We denote our confidence interval as $AdapCI$. 

The other methods we choose to compare are derived according to \cite{haeusler2007} and \cite{cheng2001}.
 We first estimate $\beta$ by $\hat \beta$ as discussed in \citet[$\S$3]{cheng2001}. We then use it to compute the number of the largest samples $\hat k$ that we will consider. We will consider three different values $\hat k^*, \tilde k, k_{CP}$ of $\hat k$. We first consider the optimal estimated sample number $\hat k^*:= \lfloor n^{2\hat \beta/(2\hat \beta+1)}\rfloor$. This sample fraction should provide the best results in terms of estimating $1/\tau$. Second, we use a sample number that is a $o(\hat k^*)$, namely $\tilde k:=\lfloor \hat k^*/\sqrt{\log n} \rfloor$. The rational behind this heuristic is that the asymptotic normality of Hill estimate with known constants (i.e.~with negligible bias) holds if and only if the sample number is $o(n^{2\beta/(2\beta+1)})$, and in this case exact asymptotic coverage of the confidence interval can be achieved. Third, we use $\hat k_{CP}$ suggested in \citet[$\S$3]{cheng2001}. The idea behind this heuristic is to provide a coverage that is as close to the theoretic coverage $1-\alpha$ as possible.

  The confidence interval will then be computed using the $\hat k$ largest samples, according to two methods discussed in~\cite{haeusler2007}: the \textit{Wald} and \textit{score} type confidence intervals. A first step is to estimate $1/\tau$ by Hill's estimate $\widehat{1/\tau (\hat k)}$ with the sample number $\hat k$. The confidence intervals will then be centred on this estimate. The \textit{Wald} type confidence interval is obtained by \citet[p.177]{haeusler2007}
$$
\Big( (1-\hat k^{-1/2}q_{1-\alpha/2})\widehat{1/\tau (\hat k)}, (1+\hat k^{-1/2}q_{1-\alpha/2})\widehat{1/\tau (\hat k)})\Big),
$$
where
\begin{align*}
\widehat{1/\tau (\hat k)} &:= \frac{1}{\hat k} \sum_{i=1}^{\hat k} \log X_{(n-i+1)} - \log X_{(n-\hat k)} \\
X_{(n-k)} &\leq X_{(n-k+1)} \leq \ldots \leq X_{(n)}.
\end{align*}
The \textit{score} type confidence interval is obtained by
$$
\Big( (1+\hat k^{-1/2}q_{1-\alpha/2})^{-1} \widehat{1/\tau (\hat k)}, (1-\hat k^{-1/2}q_{1-\alpha/2})^{-1}\widehat{1/\tau (\hat k)}\Big).
$$
We denote these confidence intervals $W_{\hat k^*}, W_{\tilde k},W_{CP}$ for \textit{Wald}, and similarly $S_{\hat k^*}, S_{\tilde k},S_{CP}$ for \textit{score} method.

We iterate these procedures 100 times, and compute the number of times that obtained confidence intervals contain the true $1/\tau$ (\textit{coverage}) and the average of length of intervals (\textit{size}).
In order to compare these 7 methods, both coverage and size have to be taken into account, and the ranking of the methods we analyse is not straightforward. A good confidence interval should have high coverage and small size, but there is a trade-off between these two quantities.
 Although our focus is to provide confidence intervals, for the readers' interests, we also provide their mean values and MSEs from 100 iterations (see Table \ref{Pareto_sim2}, \ref{Student_sim2}, \ref{Frechet_sim2},  and \ref{Cauchy_sim2}).
\color{black}

A natural competitor would also be the method presented in \citet[Table 5]{gomes2012} since it is the only method (to the best of our knowledge) that is proven to be adaptive to the second order parameter---the results of \cite{haeusler2007} and \cite{cheng2001} are proven for a fixed sample fraction, and some fixed oracle sample fractions are discussed. They are however not proven for an adaptive sample fraction, and only heuristics with adaptive sample fractions are proposed in these papers. However, \cite{gomes2012} describe their method as a ``terribly time-consuming algorithm", and they display computational results \textit{with size and coverage of the confidence set} only for a Student distribution of parameter 2, for $n \in \{100, 200, 1000\}$ \citep[see Table 5 in][]{gomes2012}. Moreover, we found that our method, as well as the \textit{score} and \textit{Wald} methods, give much better results in this case, simultaneously in terms of size and coverage (see Table \ref{student_sim}). Thus we do not implement the method of \cite{gomes2012} on our experiments as a competitor, but we can still compare the results for the Student distribution of parameter 2.



We provide the simulation results on the coverage and size of the confidence intervals in Tables~\ref{pareto_sim} (Pareto), \ref{student_sim} (Student), \ref{frechet_sim} (Fr\'echet) and \ref{cauchy_sim} (Cauchy). We can see that our adaptive method $AdapCI$ gives fairly stable and small confidence intervals in terms of both the coverage and the size, and is particularly efficient for small sample sizes. The \textit{Wald} method provides also good results, both in terms of size and coverage, in particular with $\hat k^*$ number of samples (which provides the smallest confidence interval, with almost always good coverage). Our method is in most cases comparable to the associated method $W_{\hat k^*}$. In contrast, the \textit{score} method gives often a too wide confidence interval for small sample sizes $n=100,200$. For the case $\tau=2$ in Table \ref{student_sim}, we can compare our result with \citet[Table 5]{gomes2012}. The results in \citet[Table 5]{gomes2012} are almost always worse both in terms of coverage and size than the results of Table \ref{student_sim}. 


\begin{table}[ht!] \centering 
\scriptsize
\caption{Coverage probabilities (\textit{first row}), sizes (\textit{second row}) of the confidence intervals for $1/\tau$ for underlying $\tau$-Pareto distributions} \label{pareto_sim}
\ra{1.2}
\begin{tabular}{@{}l rrrr c rrrr@{}} 
  \toprule[1.2pt]
&\multicolumn{4}{c}{$\tau = 1$} && \multicolumn{4}{c}{$\tau = 2$}\\ 
\cmidrule{2-5} \cmidrule{7-10}
$n$ & 100 & 200 & 1000 & $10^4$ & \phantom{abc} & 100 & 200  & 1000 & $10^4$\\ 
\midrule
$AdapCI$ & 100 &100 & 97 & 85 && 99 & 100 & 99 &96\\
           & 0.66  & 0.52& 0.34 &  0.27 && 0.34 & 0.27 & 0.20& 0.16 \\
 \hline
$W_{\hat k^*}$ & 93 & 93 & 93 & 94 && 90 & 93 & 96 & 92 \\
           & 0.86  & 0.64 & 0.36 & 0.16 && 0.90 & 0.69 & 0.24& 0.06 \\
 \hline
$W_{\tilde k}$ &93 &92 & 94& 94&& 86 & 91 & 91& 96 \\
           & 1.30  & 1.01 & 0.59 & 0.28 && 0.90 & 0.87 & 0.39 & 0.11\\
\hline 
$W_{CP}$ &91 & 91 & 93 & 90 && 85 & 87 & 91 & 98 \\
            & 1.18  & 0.95 & 0.63 & 0.36  && 0.97 & 0.92 & 0.56 & 0.18 \\
 \hline
$S_{\hat k^*}$ &96 & 96 & 96 & 95 && 98 & 97 &93 & 94 \\
          & 1.14  & 0.72 & 0.38 & 0.16 && 4.45 & 3.66  & 0.26 & 0.06\\
 \hline
$S_{\tilde k}$ &96 & 95 &96 & 96 && 98 & 94 & 93 & 97 \\
           & 5.15  & 1.39 & 0.65& 0.29 && 7.21 & 6.18 & 0.50& 0.11\\
\hline
$S_{CP}$&97 & 92 & 97 & 92 && 97 & 94 & 93 & 99\\
           & 3.33  & 1.25 & 0.70 & 0.37 && 8.72 & 7.99 & 2.16 & 0.19\\
 \bottomrule[1.2pt]
\end{tabular} 
\end{table}

\begin{table}[ht!] \centering 
\scriptsize
\caption{Coverage probabilities (\textit{first row}), sizes (\textit{second row}) of the confidence intervals for $1/\tau$ for underlying $\tau$-Student distributions} \label{student_sim}
\ra{1.2}
\begin{tabular}{@{}l rrrr c rrrr@{}} 
  \toprule[1.2pt]
&\multicolumn{4}{c}{$\tau = 1$} && \multicolumn{4}{c}{$\tau = 2$}\\ 
\cmidrule{2-5} \cmidrule{7-10}
$n$ & 100 & 200 & 1000 & $10^4$ & \phantom{abc} & 100 & 200  & 1000 & $10^4$\\ 
\midrule
$AdapCI$ &80 & 80 & 55 & 93 && 100 & 100 & 98 & 96 \\
           & 0.55 & 0.48 & 0.39 & 0.33 && 0.41 & 0.33 & 0.33&  0.24 \\
 \hline
$W_{\hat k^*}$ & 56 & 67 & 55 & 47 && 83 & 86 & 74 & 70\\
           & 0.55 & 0.46 & 0.26 & 0.12 && 0.36 & 0.28 & 0.13 & 0.18 \\
 \hline
$W_{\tilde k}$ & 79 & 82 & 85 & 91 && 84 & 86 &  66 & 84 \\
            &  0.87 & 0.76 & 0.46 & 0.22 && 0.47 & 0.42 & 0.20 & 0.31 \\
 \hline
$W_{CP}$ & 70 & 79 & 85 &  96 && 82 & 84 & 80 & 87 \\
           &  0.71 & 0.67 & 0.46 & 0.27 && 0.57 & 0.50 & 0.26 & 0.41 \\
 \hline
$S_{\hat k^*}$ &  81 & 81 & 63 &  47 && 91 & 95 & 80 & 73 \\
           &   0.65 & 0.50 &  0.27 & 0.12 && 0.55 & 0.60 & 0.14 & 0.31 \\
 \hline
$S_{\tilde k}$ &  94  & 94 & 95 & 94 && 95 & 95 & 73 &  91\\
           &   1.26 &  0.92 & 0.48 & 0.22&& 1.45 & 1.84 & 0.29 &0.95 \\
 \hline
$S_{CP}$ &  89 & 90 & 93 & 97 && 93 & 96 & 88 & 91\\
           &  1.10 & 0.78 & 0.49 & 0.27 && 2.91&  1.84 & 0.37 & 1.11\\ 
 \bottomrule[1.2pt]
\end{tabular} 
\end{table}

\begin{table}[ht!] \centering 
\scriptsize
\caption{Coverage probabilities (\textit{first row}), sizes (\textit{second row}) of the confidence intervals for $1/\tau$ for underlying $\tau$-Fr$\acute{\text{e}}$chet distributions} \label{frechet_sim}
\ra{1.2}
\begin{tabular}{@{}l rrrr c rrrr@{}} 
  \toprule[1.2pt]
&\multicolumn{4}{c}{$\tau = 1$} && \multicolumn{4}{c}{$\tau = 2$}\\ 
\cmidrule{2-5} \cmidrule{7-10}
$n$ & 100 & 200 & 1000 & $10^4$ & \phantom{abc} & 100 & 200  & 1000 & $10^4$\\ 
\midrule
$AdapCI$ & 98 & 93& 85 & 96 && 81 & 81  & 84 & 95\\
           & 0.62  & 0.51& 0.40& 0.37 &&  0.34 & 0.38& 0.44& 0.32\\
 \hline
$W_{\hat k^*}$  & 83 & 89& 95& 88 && 50& 35& 47& 86\\
            & 0.70  & 0.56 & 0.33& 0.15 && 0.35 & 0.29& 0.32& 0.28\\
 \hline
$W_{\tilde k}$  &92 & 87& 98& 98 && 75& 60& 67& 93\\
           & 1.09  & 0.89& 0.55& 0.27 && 0.50 & 0.43& 0.46& 0.51\\
 \hline
$W_{CP}$ &88 & 88& 95& 96&& 79& 69& 76& 91\\
           & 0.99  & 0.86& 0.60& 0.36 && 0.63 & 0.51& 0.56& 0.66\\
 \hline
$S_{\hat k^*}$ &97 & 95& 96& 90 && 75& 49& 56& 92\\
           & 0.83  & 0.62& 0.34& 0.15 && 1.46 & 1.18& 1.85& 0.94\\
 \hline
$S_{\tilde k}$ &95 & 95& 99& 99 && 96& 81& 80& 97\\
           & 0.67  & 1.13& 0.59& 0.28 && 1.60 & 2.04& 3.20& 2.62\\

 \hline
$S_{CP}$ &96 & 95& 98& 93 && 97& 90& 91& 95\\
           & 1.39  & 1.07& 0.66& 0.38 && 3.62 & 2.71& 3.72& 2.82\\

\bottomrule[1.2pt]
\end{tabular}
\end{table}

\begin{table}[ht!] \centering 
\scriptsize
\caption{Coverage probabilities (\textit{first row}), sizes (\textit{second row}) of the confidence intervals for $1/\tau \equiv 1$ for underlying Cauchy distributions} \label{cauchy_sim}
\ra{1.2}
\begin{tabular}{@{}l rrrr@{}} 
  \toprule[1.2pt]
&\multicolumn{4}{c}{$\tau = 1$}\\ 
\cmidrule{2-5} 
$n$ & 100 & 200 & 1000 & $10^4$ \\ 
\midrule
$AdapCI$  &78 & 80& 52 & 94 \\
           & 0.57  & 0.46& 0.36& 0.37 \\
 \hline
$W_{\hat k^*}$  & 63 & 67& 61& 48\\
           & 0.58  & 0.48 & 0.26& 0.12 \\
 \hline
$W_{\tilde k}$ &81 & 89& 89& 89\\
          &0.94  & 0.80& 0.45& 0.22 \\
 \hline
$W_{CP}$  &76 & 82& 89& 91\\
            & 0.78  & 0.69& 0.46& 0.27 \\
 \hline
$S_{\hat k^*}$ &80 & 76& 69& 52\\
           & 0.77  & 0.52& 0.27& 0.12 \\
 \hline
$S_{\tilde k}$ &95 & 93& 94& 91\\
           & 1.44  & 0.98& 0.37& 0.22 \\
  \hline
$S_{CP}$  &88 & 90& 93& 94\\
           & 1.11  & 0.81& 0.48& 0.27\\
\bottomrule[1.2pt]
\end{tabular}
\end{table}

\begin{table}[ht!] \centering 
\scriptsize
\caption{Means (\textit{first row}), MSEs (\textit{second row}) of the estimates of $1/\tau$ for $\tau$-Pareto distributions} \label{Pareto_sim2}
\ra{1.2}
\begin{tabular}{@{}l rrrr c rrrr@{}} 
  \toprule[1.2pt]
&\multicolumn{4}{c}{$\tau = 1$} && \multicolumn{4}{c}{$\tau = 2$}\\ 
\cmidrule{2-5} \cmidrule{7-10}
$n$ & 100 & 200 & 1000 & $10^4$ & \phantom{abc} & 100 & 200  & 1000 & $10^4$\\ 
\midrule
$\widehat{1/\tau (\hat k)}$ & 0.995 & 0.998& 0.977 & 0.970  && 0.496 & 0.499  & 0.489 & 0.487\\
       & 0.013  & 0.007& 0.009& 0.005 &&  0.003 & 0.002& 0.002& $9.4 \ .1^{4}$\\
 \hline
$\widehat{1/\tau (\hat k^*)}$  & 0.936 & 0.987& 0.989& 0.997 && 0.492& 0.512& 0.512& 0.502\\
            & 0.051  & 0.030 & 0.009& 0.001 && 0.044 & 0.024& 0.004& $2.6 \ .1^{4}$\\
 \hline
$\widehat{1/\tau (\tilde k)}$  &0.929 & 0.960& 1.000& 0.996 && 0.486& 0.503& 0.504& 0.501\\
           & 0.103  & 0.055& 0.027 & 0.004 && 0.065 & 0.059& 0.011& $6.8 \  .1^{4}$\\
 \hline
$\widehat{1/\tau (k_{CP})}$ &0.923 & 0.966& 1.001& 0.989&& 0.479& 0.497& 0.492& 0.500\\
           & 0.087  & 0.046& 0.029 & 0.008 && 0.060& 0.067& 0.019& 0.002\\
\bottomrule[1.2pt]
\end{tabular}
\end{table}

\begin{table}[ht!] \centering 
\scriptsize
\caption{Means (\textit{first row}), MSEs (\textit{second row}) of the estimates of $1/\tau$ for  $\tau$-Student distributions} \label{Student_sim2}
\ra{1.2}
\begin{tabular}{@{}l rrrr c rrrr@{}} 
  \toprule[1.2pt]
&\multicolumn{4}{c}{$\tau = 1$} && \multicolumn{4}{c}{$\tau = 2$}\\ 
\cmidrule{2-5} \cmidrule{7-10}
$n$ & 100 & 200 & 1000 & $10^4$ & \phantom{abc} & 100 & 200  & 1000 & $10^4$\\ 
\midrule
$\widehat{1/\tau (\hat k)}$ & 0.860 & 0.837& 0.872 & 0.927  && 0.584 & 0.565  & 0.463 & 0.465\\
       & 0.030  & 0.032& 0.021& 0.007 &&  0.010 & 0.006& 0.007& 0.008\\
 \hline
$\widehat{1/\tau (\hat k^*)}$  & 0.855 & 0.840& 0.897& 0.934 && 0.467& 0.460& 0.471& 0.466\\
            & 0.054  & 0.044 & 0.017& 0.005 && 0.019 & 0.009& 0.002& 0.004\\
 \hline
$\widehat{1/\tau (\tilde k)}$  &0.923 & 0.891& 0.955& 0.965 && 0.455& 0.429& 0.452& 0.480\\
           & 0.085  & 0.056& 0.016 & 0.004 && 0.041 & 0.020& 0.007& 0.007\\
 \hline
$\widehat{1/\tau (k_{CP})}$ &0.890 & 0.889& 0.956& 0.976&& 0.456& 0.441& 0.463& 0.480\\
           & 0.076  & 0.051& 0.016  & 0.005 && 0.053& 0.024& 0.009& 0.013\\
\bottomrule[1.2pt]
\end{tabular}
\end{table}

\begin{table}[ht!] \centering 
\scriptsize
\caption{Means (\textit{first row}), MSEs (\textit{second row}) of the estimates of $1/\tau$ for  $\tau$-Fr$\acute{\text{e}}$chet distributions} \label{Frechet_sim2}
\ra{1.2}
\begin{tabular}{@{}l rrrr c rrrr@{}} 
  \toprule[1.2pt]
&\multicolumn{4}{c}{$\tau = 1$} && \multicolumn{4}{c}{$\tau = 2$}\\ 
\cmidrule{2-5} \cmidrule{7-10}
$n$ & 100 & 200 & 1000 & $10^4$ & \phantom{abc} & 100 & 200  & 1000 & $10^4$\\ 
\midrule
$\widehat{1/\tau (\hat k)}$ & 0.903 & 0.903& 0.941 & 0.954  && 0.424 & 0.414  & 0.440 & 0.461\\
       & 0.018  & 0.016& 0.009& 0.007 &&  0.009 & 0.014& 0.007& 0.007\\
 \hline
$\widehat{1/\tau (\hat k^*)}$  & 0.930 & 0.926& 0.961& 0.982 && 0.408& 0.416& 0.415& 0.465\\
            & 0.037  & 0.027 & 0.008& 0.002 && 0.019 & 0.022& 0.015& 0.011\\
 \hline
$\widehat{1/\tau (\tilde k)}$  &0.939 & 0.951& 0.997& 0.986 && 0.434& 0.418& 0.433& 0.475\\
           & 0.076  & 0.046& 0.015 & 0.005 && 0.029 & 0.027& 0.021& 0.026\\
 \hline
$\widehat{1/\tau (k_{CP})}$ &0.939 & 0.954& 0.998& 0.983&& 0.455& 0.424& 0.436& 0.505\\
           & 0.066  & 0.047& 0.020 & 0.009 && 0.036& 0.031& 0.031& 0.038\\
\bottomrule[1.2pt]
\end{tabular}
\end{table}

\begin{table}[ht!] \centering 
\scriptsize
\caption{Means (\textit{first row}), MSEs (\textit{second row}) of the estimates of $1/\tau$ for  $\tau$-Cauchy distributions} \label{Cauchy_sim2}
\ra{1.2}
\begin{tabular}{@{}l rrrr@{}} 
  \toprule[1.2pt]
&\multicolumn{4}{c}{$\tau = 1$}\\ 
\cmidrule{2-5} 
$n$ & 100 & 200 & 1000 & $10^4$ \\ 
\midrule
$\widehat{1/\tau (\hat k)}$  &0.846 & 0.848& 0.874 & 0.935 \\
           & 0.033  & 0.029 & 0.021& 0.007 \\
 \hline
$\widehat{1/\tau (\hat k^*)}$  &0.847 & 0.876& 0.895& 0.940\\
          &0.058 & 0.037 & 0.017& 0.004 \\
 \hline
$\widehat{1/\tau (\tilde k)}$  &0.904 & 0.944& 0.969& 0.977\\
            & 0.087 & 0.060& 0.020& 0.004 \\
 \hline
$\widehat{1/\tau (k_{CP})}$ &0.870 & 0.938 & 0.969& 0.986\\
           & 0.073  & 0.055& 0.020& 0.005 \\
\bottomrule[1.2pt]
\end{tabular}
\end{table}

\subsection{Experiments on discretized distributions}
As we claim in Subsection \ref{subsec:largermodel}, our model contains discretized Pareto distributions which are not contained in usual models previously considered. In this Subsection, we thus consider the model (\ref{disc_paretomodel}) $F:x \in [1, \infty) \rightarrow 1-\lfloor x\rfloor^{-\tau}$. As discussed in Subsection \ref{subsec:largermodel}, $F \in \mathcal S(\tau, 1/\tau, 1, C')$ for $C'$ large enough depending on $\tau$. We perform the experiments for the seven methods discussed in the last subsection, and for sample sizes $n \in \{100, 200, 1000, 10^4, 10^5\}$, and for $\tau \in \{1,2\}$.

Table \ref{discPareto_sim} shows the results for these class discretized distributions. All methods perform correctly in the case $\tau = 1$. However, for $\tau = 2$, the coverage probability of all the \textit{Wald} and \textit{score} methods are very small for a large sample size $n \in \{10^4, 10^5\}$. This comes from the fact that these methods over-estimate $\beta$ (which is $1/\tau = 1/2$ in this case), which implies the size of the confidence interval is too small for guaranteeing a good coverage. This problem is more acute for $\tau = 2$ that for $\tau = 1$ since $\beta = 1/\tau$ is smaller for $\tau = 2$. Our method $AdapCI$, on the other hand, detects that the complexity of this model is higher than in the case of the exact Pareto distribution, and increases the size of the confidence intervals, which guarantees a good coverage for the resulting confidence interval.


\begin{table}[ht!] \centering 
\scriptsize
\caption{Coverage probabilities (\textit{first row}), sizes (\textit{second row}) of the confidence intervals for $1/\tau$ for underlying discretized $\tau$-Pareto distributions} \label{discPareto_sim}
\ra{1.2}
\begin{tabular}{@{}l rrrrr c rrrrr@{}} 
  \toprule[1.2pt]
&\multicolumn{5}{c}{$\tau = 1$} && \multicolumn{5}{c}{$\tau = 2$}\\ 
\cmidrule{2-6} \cmidrule{8-12}
$n$ & 100 & 200 & 1000 & $10^4$ & $10^5$ & \phantom{abc} & 100 & 200  & 1000 & $10^4$ &$10^5$\\ 
\midrule
$AdapCI$ & 97 & 99& 97 & 96 & 99 && 60 & 96  & 96 & 99 & 100\\
           & 0.68  & 0.65& 0.62& 0.43 & 0.39 &&  0.47 & 0.54& 0.47& 0.35& 0.27\\
 \hline
$W_{\hat k^*}$ & 91 & 91& 92& 87& 94 && 45& 63& 86& 16& 7\\
           & 0.94  & 0.70 & 0.62& 0.29 & 0.07 && 0.37 & 0.46& 0.62& 0.07& 0.01\\
 \hline
$W_{\tilde k}$  &90 & 91& 87& 70& 94 &&75& 75& 87& 55& 12\\
           & 1.38  & 1.02& 1.01& 0.44 & 0.13 && 0.59 & 0.68& 0.83& 0.12& 0.03\\
 \hline
$W_{CP}$ &92 & 93& 89& 93& 99 && 71& 76& 90& 67& 30\\
           & 1.30  & 0.94& 1.04& 0.51 & 0.20 && 0.54 & 0.57& 0.75& 0.13& 0.03\\
  \hline
$S_{\hat k^*}$  &89 & 93& 94& 87& 94 &&51& 60& 86& 16& 7\\
           & 2.10  & 0.84& 0.70& 0.33 & 0.07 && 0.92 & 1.25& 3.51& 0.07& 0.01\\
 \hline
$S_{\tilde k}$  &96 & 94& 94& 96& 93 &&51& 72& 90& 50& 12\\
           & 4.74  & 2.35& 1.81& 1.44 & 0.13 &&2.46 & 2.15& 4.97& 0.12& 0.03\\
\hline
$S_{CP}$ &95 & 95& 96& 95& 96 && 59& 73& 91& 62& 31\\
           & 3.00  & 1.84& 2.12& 2.58 & 0.21 &&1.25 & 1.76& 4.11& 0.14& 0.03\\
\bottomrule[1.2pt]
\end{tabular}
\end{table}

\begin{table}[ht!] \centering 
\scriptsize
\caption{Means (\textit{first row}), MSEs (\textit{second row}) of the estimates of $1/\tau$ for discretized $\tau$-Pareto distributions} \label{discPareto_sim2}
\ra{1.2}
\begin{tabular}{@{}l rrrrr c rrrrr@{}} 
  \toprule[1.2pt]
&\multicolumn{5}{c}{$\tau = 1$} && \multicolumn{5}{c}{$\tau = 2$}\\ 
\cmidrule{2-6} \cmidrule{8-12}
$n$ & 100 & 200 & 1000 & $10^4$ & $10^5$ & \phantom{} & 100 & 200  & 1000 & $10^4$ &$10^5$\\ 
\midrule
\scriptsize{$\widehat{1/\tau (\hat k)}$}  & \scriptsize{1.006} & \scriptsize{1.020}& \scriptsize{0.999} & \scriptsize{0.906} & \scriptsize{0.981} && \scriptsize{0.371} & 0.444  & 0.467 & 0.421& 0.416\\
           & \scriptsize{0.022}  & 0.015& 0.015& 0.056 & 0.002 &&  0.027 & 0.011& 0.008& 0.019 & 0.019\\
 \hline
\scriptsize{$\widehat{1/\tau (\hat k^*)}$} & \scriptsize{1.061} & 1.035& 1.020& 1.031& 1.007 && 0.511& 0.588& 0.530& 0.556& 0.456\\
           & \scriptsize{0.068} & 0.033 & 0.025& 0.010 & \tiny{$4.4 \ .1^{5}$} && 0.040 & 0.051& 0.028& 0.017& 0.006\\
 \hline
\scriptsize{$\widehat{1/\tau (\tilde k)}$}  &\scriptsize{1.064} & 1.025& 1.026& 1.016& 0.999 && 0.630& 0.592& 0.578& 0.528& 0.493\\
           & \scriptsize{0.121}  & 0.070& 0.076& 0.022 & \tiny{$8.5 \ .1^{4}$}  && 0.081 & 0.065 & 0.059& 0.005& 0.008\\
 \hline
\scriptsize$\widehat{1/\tau (k_{CP})}$ &\scriptsize{1.074} & 1.039& 1.038& 1.031& 0.994 && 0.631& 0.612& 0.551& 0.524& 0.512\\
           & \scriptsize{0.127}  & 0.059& 0.085& 0.035 & 0.002 && 0.071 & 0.056& 0.037& 0.004& 0.002\\
\bottomrule[1.2pt]
\end{tabular}
\end{table}

\section{Technical proofs}
\subsection{Proof of the upper bound in Theorem~\ref{thm:test1} (Proof of [A.])}\label{proof:theorem1A}




In this Subsection, we write for simplicity $\set_0 = \set_0(\tau,C)$, $\set_1 = \set_1(\tau,C)$ and $\tilde \set_1 = \tilde \set_1(\tau,C, \rho_n)$.

Let $X_1, \ldots, X_n$ be an i.i.d. random sample from a distribution $F \in \mathcal{S}_1$. We write, for any $x \in \rr^+$,
\begin{equation*} 
p_x := \mathbb P(X>x) = 1 - F(x),
\end{equation*}
and its empirical estimate that we define for all $x$ rationals (we write $\mathbb Q$ for the rationals) larger than $0$
\begin{equation*}
\hat p_x := \frac{1}{n} \sum_{i=1}^n \one_{\{ X_i > x \}}.
\end{equation*}
For the $x$ that are non rational, we set $\hat p_x = \lim_{y \in \mathbb Q, y>x, y \rightarrow x} \hat p_y$. 

We propose the following test statistic
\begin{align*}
T_n = \sup_{x \leq n^{\frac{1}{\tau(2\beta_1+1)}}} \Big(|x^{\tau} \hat p_x - C| - C' x^{-\tau\beta_0}\Big).
\end{align*}
The test is of the form
\begin{align*}
\Psi_n = \one_{\{ T_n \geq \rho_n/2\}},
\end{align*}
where $\rho_n \geq \max(4D\log(1/\alpha),2C') n^{-\frac{\beta_1}{2\beta_1+1}}$ with a universal constant $D$ in Lemma \ref{lem:lardev}.
Then, we reject the null if $\Psi_n = 1$, and vice versa.

The following results in Lemma \ref{lem:doublehypo} show that the test statistics $T_n$ is a reasonable criterion for this testing problem. Lemma \ref{lem:lardev} proves that the difference between empirical estimate and the true probability is controlled uniformly well.

\begin{lemma}\label{lem:doublehypo}
Assume that $\rho_n \geq 2C' n^{-\frac{\beta_1}{2\beta_1+1}}$. Then
\begin{itemize}
\item[(i)] $F \in H_0$ implies $\sup_x \Big(|x^{\tau} p_x - C| - C' x^{-\tau\beta_0}\Big) \leq 0$.
\item[(ii)] $F \in H_1$ implies that $\sup_{x \leq n^{\frac{1}{\tau(2\beta_1+1)}}} \Big(|x^{\tau} p_x - C| - C' x^{- \tau\beta_0}\Big) \geq \rho_n$.
\end{itemize}
\end{lemma}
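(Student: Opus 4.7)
Part (i) is immediate from the defining inequality of $\mathcal{S}(\tau,\beta_0,C,C')$. For $F \in H_0$ and any admissible $x$ (i.e.\ with $F(x) \in (0,1]$), multiplying the bound $|1-F(x)-Cx^{-\tau}| \leq C' x^{-\tau(\beta_0+1)}$ by $x^{\tau}$ yields $|x^{\tau} p_x - C| - C' x^{-\tau\beta_0} \leq 0$, and taking the supremum over $x$ preserves this.

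For part (ii), my plan is to first establish the identity
\[
\|F - \mathcal{S}_0\|_{\infty,\tau} \;=\; \sup_{x}\,\bigl(|x^{\tau} p_x - C| - C' x^{-\tau\beta_0}\bigr)_{+},
\]
and then invoke it together with the two defining features of $F \in \tilde{\mathcal{S}}_1$: the separation $\|F - \mathcal{S}_0\|_{\infty,\tau} \geq \rho_n$ and the containment $F \in \mathcal{S}(\tau,\beta_1,C,C')$. The identity combined with the separation gives $\sup_{x}(|x^{\tau} p_x - C| - C'x^{-\tau\beta_0}) \geq \rho_n$ (the sup taken over all $x$). To restrict the supremum to $x \leq X := n^{1/(\tau(2\beta_1+1))}$, I use the $\beta_1$-envelope: for $x > X$, $|x^{\tau} p_x - C| \leq C' x^{-\tau\beta_1} < C' X^{-\tau\beta_1} = C' n^{-\beta_1/(2\beta_1+1)} \leq \rho_n/2$ under the standing assumption on $\rho_n$, so the quantity inside the supremum is strictly less than $\rho_n$ on $\{x>X\}$, forcing the supremum over $\{x\leq X\}$ to achieve $\rho_n$.

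The identity has two directions; only the upper bound $(\leq)$ is strictly necessary for the argument above, but both are natural. The $(\geq)$ half is the reverse triangle inequality: for any $F_0 \in \mathcal{S}_0$ and any $x$,
\[
|x^{\tau} p_x - x^{\tau}(1-F_0(x))| \geq |x^{\tau} p_x - C| - |x^{\tau}(1-F_0(x)) - C| \geq |x^{\tau} p_x - C| - C' x^{-\tau\beta_0},
\]
after which one takes $\sup_x$ and then $\inf_{F_0}$. For the $(\leq)$ half I produce a nearly optimal witness $F_0 \in \mathcal{S}_0$ by clipping $F$ onto the $\beta_0$-envelope,
\[
1 - F_0(x) \;:=\; x^{-\tau}\,\max\Bigl\{C - C'x^{-\tau\beta_0},\, \min\bigl[x^{\tau} p_x,\, C + C'x^{-\tau\beta_0}\bigr]\Bigr\}.
\]
By construction, $|x^{\tau}(1-F_0(x)) - C| \leq C' x^{-\tau\beta_0}$, which places $F_0$ in $\mathcal{S}_0$, and $|x^{\tau} p_x - x^{\tau}(1-F_0(x))| = (|x^{\tau} p_x - C| - C' x^{-\tau\beta_0})_{+}$ holds pointwise, so $\|F - F_0\|_{\infty,\tau}$ equals the RHS of the identity and dominates $\|F - \mathcal{S}_0\|_{\infty,\tau}$.

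The step I expect to be most delicate is verifying that this clipped $F_0$ is a bona fide c\`adl\`ag distribution on $\mathbb{R}^{+}$, not merely a pointwise formula. Monotonicity of $1-F_0$ holds on the tail because the upper envelope $Cx^{-\tau}+C'x^{-\tau(\beta_0+1)}$ is globally decreasing and the lower envelope $Cx^{-\tau}-C'x^{-\tau(\beta_0+1)}$ is decreasing beyond a fixed threshold depending only on $\tau,\beta_0,C,C'$; clipping a decreasing function $p_x$ against decreasing envelopes then preserves monotonicity. Near the origin the lower envelope may fail to be monotone, but the norm $\|\cdot\|_{\infty,\tau}$ sees only $x$ with $F(x) \in (0,1)$, so one can redefine $F_0$ trivially (e.g.\ $F_0 \equiv 0$) outside the effective support of $F$ without altering the computation. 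This is pure book-keeping rather than a conceptual obstacle, and once handled the two Lemma statements follow as above.
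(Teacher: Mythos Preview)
Your proof is correct and follows essentially the same approach as the paper: both arguments clip $F$ onto the $\beta_0$-envelope to produce a witness $F_0\in\mathcal S_0$, use the separation condition against this $F_0$ to obtain $\sup_x(|x^\tau p_x-C|-C'x^{-\tau\beta_0})\geq\rho_n$, and then invoke the $\beta_1$-envelope to restrict the supremum to $x\leq n^{1/(\tau(2\beta_1+1))}$. Your presentation is slightly more structured (stating the identity $\|F-\mathcal S_0\|_{\infty,\tau}=\sup_x(|x^\tau p_x-C|-C'x^{-\tau\beta_0})_+$ explicitly), and you are more explicit than the paper about verifying that the clipped $F_0$ is a genuine c\`adl\`ag distribution---the paper simply asserts $\tilde F\in\mathcal S_0$ ``by definition''---but the underlying ideas are the same.
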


\begin{lemma}\label{lem:lardev}
Suppose we have an iid sample from $F \in \set_1$. With probability larger than $1-\alpha$, we have
\begin{align*}
\sup_{x \leq n^{\frac{1}{\tau(2\beta_1+1)}}} x^{\tau} | \hat p_x - p_x | &\leq  D n^{-\frac{\beta_1}{2\beta_1+1}} \sqrt{\log\big(\frac{1}{\alpha}\big)} +n^{-\frac{2\beta_1}{2\beta_1+1}} \log \big(\frac{1}{\alpha}\big) \\
&\leq  2D n^{-\frac{\beta_1}{2\beta_1+1}} \log\big(\frac{1}{\alpha}\big),
\end{align*}
where $D$ is some constant that depends only on $\beta_1$, on a lower bound on $\tau$ and on an upper bound on $C,C'$.
\end{lemma}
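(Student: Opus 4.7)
The strategy is to combine Bernstein's inequality with a dyadic discretization that exploits both the monotonicity of $\hat p_x$ and the second-order Pareto structure bounding $x^\tau p_x \leq C+C'$. Set $x_j = 2^{j/\tau}$ for $j = 0, 1, \ldots, J$ with $J = O(\log n)$ chosen so that $x_J \geq x^{*} := n^{1/(\tau(2\beta_1+1))}$. On the range $x_j \leq x^{*}$ one has $x_j^\tau \leq n^{1/(2\beta_1+1)}$, which is precisely the source of the rate $n^{-\beta_1/(2\beta_1+1)}$ through $\sqrt{x_j^\tau/n} \leq n^{-\beta_1/(2\beta_1+1)}$.

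At each grid point, Bernstein's inequality applied to the Binomial$(n,p_{x_j})$ variable $n\hat p_{x_j}$ yields, with probability at least $1-\delta$,
$$x_j^\tau |\hat p_{x_j} - p_{x_j}| \leq \sqrt{2 x_j^\tau (x_j^\tau p_{x_j})\log(2/\delta)/n} + x_j^\tau \log(2/\delta)/(3n),$$
and $x_j^\tau p_{x_j} \leq C+C'$ by the second-order Pareto condition. A union bound over the $J$ grid points with $\delta = \alpha/(4J)$ delivers the pointwise part of the bound simultaneously at all $x_j$.

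To uniformize over intervals $I_j = [x_j, x_{j+1}]$, I would write $\hat p_x - p_x = (\hat p_{x_j} - p_{x_j}) + \Delta_j(x)$, where $\Delta_j(x) := (\hat p_x - \hat p_{x_j}) - (p_x - p_{x_j})$ is the centered local empirical process satisfying $\Delta_j(x_j) = 0$. Conditionally on the count $N_j := n(\hat p_{x_j} - \hat p_{x_{j+1}})$ in $I_j$, which is Binomial$(n,q_j)$ with $q_j := p_{x_j} - p_{x_{j+1}}$, the Dvoretzky--Kiefer--Wolfowitz inequality applied to the $N_j$ samples in $I_j$ combined with Bernstein for $N_j$ yields $\sup_{x \in I_j}|\Delta_j(x)| \leq C_0\bigl(\sqrt{q_j \log(1/\delta)/n} + \log(1/\delta)/n\bigr)$. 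Multiplying by $x^\tau \leq 2 x_j^\tau$ and using $x_j^\tau q_j \leq x_j^\tau p_{x_j} \leq C+C'$ produces the same rate as the pointwise term.

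Combining the two contributions and taking a final union bound over the $J=O(\log n)$ intervals yields the claim, the constants depending only on $\beta_1$, a lower bound on $\tau$, and on $C,C'$. The main obstacle is the uniformization step: the naive bound $|\Delta_j(x)| \leq |\hat p_x - \hat p_{x_j}| + |p_x - p_{x_j}| \leq q_j$ produces, after multiplication by $x_j^\tau$, an error of constant order because $x_j^\tau q_j$ need not be small. It is essential to exploit that $\Delta_j$ is a \emph{centered} local empirical process (with $\Delta_j(x_j)=0$) of variance $q_j$, so that a variance-aware concentration inequality recovers the $\sqrt{q_j/n}$ scaling and thus the advertised $n^{-\beta_1/(2\beta_1+1)}$ rate after the $x_j^\tau$ weighting.
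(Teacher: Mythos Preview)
Your argument is essentially correct and takes a genuinely more elementary route than the paper. The paper proceeds by applying Talagrand's concentration inequality on each dyadic block $I_k=[e^k,e^{k+1})$, which requires first bounding $\mu_k=\ee\sup_{x\in I_k}|\hat p_x-p_x|$; this expectation bound (their Lemma~\ref{expectsup}) is obtained via a full generic chaining argument with a sub-Gaussian/sub-exponential decomposition and Orlicz norms. Your combination of Bernstein at the grid points together with a conditional DKW bound on the $N_j$ samples falling in each block avoids both Talagrand and chaining entirely, and the key observation that the \emph{centered} local process $\Delta_j$ has variance of order $q_j$ (so that after the $x_j^\tau$ weighting one recovers $\sqrt{x_j^\tau(x_j^\tau q_j)/n}\le\sqrt{(C+C')x_j^\tau/n}$) is exactly right. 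What the paper's heavier machinery buys is a somewhat more modular presentation (the expectation bound is of independent interest), but your approach is shorter and uses only classical tools.

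One point does need repair. With your uniform choice $\delta=\alpha/(4J)$ and $J\asymp\log n$, the resulting bound carries a factor $\sqrt{\log(J/\alpha)}=\sqrt{\log(1/\alpha)+O(\log\log n)}$, so the constant in front of $n^{-\beta_1/(2\beta_1+1)}$ depends on $n$, contrary to the lemma's statement that $D$ depends only on $\beta_1$ and bounds on $\tau,C,C'$. The paper handles this by choosing strongly non-uniform weights $\delta_k=\Delta\exp(-\exp((K-k)\tau))/E$, so that $e^{k\tau}\log(1/\delta_k)\le e^{K\tau}(\log(E/\Delta)+1)$ uniformly in $k$. In your framework the same idea works with a simpler geometric choice: take $\delta_j\propto 2^{-(J-j)}$, so that $x_j^\tau\log(1/\delta_j)=2^j\bigl(\log(c/\alpha)+(J-j)\log 2\bigr)\le 2^{J}\bigl(\log(c/\alpha)+O(1)\bigr)$, since $2^{j}(J-j)\le 2^{J}/(e\ln 2)$. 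With this adjustment your proof delivers the lemma exactly as stated.
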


Now, we combine the results obtained in Lemma \ref{lem:doublehypo} and \ref{lem:lardev} by considering two hypotheses separately.
Let $\alpha >0$.

\begin{description}
\item[Under $H_0$:] We obtain that with probability larger than $1-\alpha$
\begin{equation*}
\sup_{x \leq n^{\frac{1}{\tau(2\beta_1+1)}}} \Big(|  x^{\tau}\hat p_x - C| - C'x^{-\tau\beta_0}\Big) \leq  2D n^{-\frac{\beta_1}{2\beta_1+1}} \log(1/\alpha) < \rho_n/2.
\end{equation*}
\item[Under $H_1$:] We obtain that with probability larger than $1-\alpha$
\begin{equation*}
\sup_{x \leq n^{\frac{1}{\tau(2\beta_1+1)}}} \Big(|  x^{\tau}\hat p_x - C| - C'x^{-\tau\beta_0}\Big) \geq  \rho_n - 2D n^{-\frac{\beta_1}{2\beta_1+1}} \log(1/\alpha) > \rho_n/2.
\end{equation*}
\end{description}
This concludes the proof of the upper bound in Theorem \ref{thm:test1}.
\begin{proof}[Proof of Lemma \ref{lem:doublehypo}]
(i) is clear by definition of $H_0$, but proof of (ii) is more involved.

First, we define three regions $R = \{x:  | 1-F(x) - Cx^{-\tau}| \leq C' x^{-\tau (1+\beta_0)}\}$, $R^+ = \{ x: 1-F(x) \geq Cx^{-\tau}+ C'x^{-\tau(1+ \beta_0)}\}$, and $R^- = \{ x: 1-F(x) \leq Cx^{-\tau}- C'x^{-\tau(1+ \beta_0)}\}$. Then for any $F \in H_1$, we define $\tilde F$ as follows,
\begin{align*}
\tilde F(x) = F(x)\one_{\{x \in R\}} &+ (1-Cx^{-\tau}-C' x^{-\tau (1+\beta_0)}) \one_{\{x\in R^+\}}\\
 &+ (1-Cx^{-\tau}+C'x^{-\tau (1+\beta_0)})
\one_{\{x \in R^-\}}.
\end{align*}
We have $\tilde F \in \set_0$ by definition. 

By definition of $\tilde F$, we have
\begin{align}
\big|x^{\tau} &(\tilde F(x) -F(x))\big| = \left| x^{\tau} [(1-F(x)) - (Cx^{-\tau}+C'x^{-\tau(1+\beta_0)})] \right|\one_{\{x \in R^+\}}  \nonumber \\
&+  \left| x^{\tau} [(1-F(x)) - (Cx^{-\tau}-C'x^{-\tau(1+\beta_0)})] \right|\one_{\{x \in R^-\}} \nonumber \\
&= \left( x^{\tau} p_x - C-C'x^{-\tau\beta_0} \right) \one_{ \{x \in R^+\}} + \left( C- x^\tau p_x - C'x^{-\tau\beta_0}\right) \one_{\{x\in R^-\}}. \label{approx}
\end{align}

But then, by the fact that $F \in \set_1$, by using the upper bound for $x^{\tau}p_x \leq C +C' x^{-\tau\beta_1}$ and the lower bound for $x^\tau p_x \geq C-C' x^{-\tau \beta_1}$, (\ref{approx}) can be upper bounded as follows,
$$
(\ref{approx}) \leq 2(C'x^{-\tau \beta_1} - C'x^{-\tau \beta_0}) \leq 2C' x^{-\tau \beta_1}.
$$
Recall that by definition of $\tilde \set (\beta_1, \rho_n)$, for any $F_0 \in \set_0$, there exists $x_0$ such that
\begin{align*}
|x_0^{\tau} [(1-F(x_0)) - (1-F_0(x_0))]| \geq \rho_n.
\end{align*}
Thus, we can restrict the set of $x_0$ such that
$$
\rho_n \leq 2C' x_0^{-\tau \beta_1} \Leftrightarrow x_0 \leq \left( \frac{2C'}{\rho_n}\right)^{-1/(\tau \beta_1)} \leq n^{1/(\tau(2\beta_1+1)}.
$$

Since $\tilde F\in \set_0$, this implies that there exists $x_0 \leq  n^{1/(\tau(2\beta_1+1)}$ such that
\begin{align*}
|x_0^{\tau} [(1-F(x_0)) - (1-\tilde F(x_0))]| \geq \rho_n.
\end{align*}
Combining this with Equation~(\ref{approx}), it is clear that the maximum point should be either in $R^+$ or $R^-$.
Either way, we have found that there exists $x_0$ such that 
$$
|x_0^\tau p_{x_0} - C| - C'x_0^{-\tau \beta_0} \geq \rho_n.
$$
This concludes the proof of Lemma \ref{lem:doublehypo}.
\end{proof}

\begin{proof}[Proof of Lemma \ref{lem:lardev}]
For notational convenience, let $B := n^{1/(\tau(2\beta_1+1))}$.

First, we split the interval $[1, B]$ into $K :=\lfloor \log B \rfloor$ number of disjoint intervals such that 
$I_0 := [e^0,e^1), I_1 = [e^1, e^2), \ldots, I_k = [e^{k}, e^{k+1}), \ldots, I_{K-1} = [e^{K-1}, e^{K})$.

The proof is based on the Talagrand's inequality \citep{talagrand} of the form in \citet[Theorem 7.3]{bousquet2003concentration} and \citet[Equation (13)]{bull2011adaptive} after using the fact that $(1+t)\log(1+t)-t \geq \min (t^2/3, t/2)$ for $t \geq 0$.  See also~\citep{wellnerbook, boucheron2013concentration}.
\begin{theorem}[Talagrand's inequality]
Let $\FF$ be a countable set of functions from $\Omega$ to $\rr$ and assume that all functions $f$ in $\FF$ are measurable and takes values in $[-1/2,1/2]$. 
Denote
$$
Z = \sup_{f \in \FF} \left| \sum_{i=1}^n (f(X_i) - \ee f(X)) \right|.
$$
Let $\sigma \leq 1/2$, and $V$ be any two numbers satisfying
$$
\sigma^2 \geq \sup_{f \in \FF} \ee f^2, \ \ V \geq n\sigma^2 + 2 \ee Z.
$$
Then for all $t \geq 0$, 
$$
\pp \left( Z\geq \ee(Z) + t \right) \leq \exp\left(- \min \left[\frac{t^2}{3(n\sigma^2+2\ee(Z))}, \frac{t}{2}\right] \right).
$$
\end{theorem}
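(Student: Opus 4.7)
The statement is Bousquet's version of Talagrand's concentration inequality rewritten in a Bernstein-type form, so my plan is not to rederive the deep content from scratch but rather to describe the three bridging steps that convert the standard reference statement into the displayed bound. I would organize the proof as (i) a reduction from the two-sided supremum $Z$ to a one-sided empirical-process supremum, (ii) an invocation of Bousquet's Bennett-type inequality for the one-sided supremum, and (iii) a purely analytic conversion from the Bennett form to the Bernstein form via the inequality $(1+u)\log(1+u)-u \ge \min(u^2/3, u/2)$ already noted in the surrounding text.

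For step (i), I would observe that $|y| = \max(y,-y)$, so letting $\mathcal{G} := \mathcal{F}\cup(-\mathcal{F})$ gives
\[
Z \;=\; \sup_{g \in \mathcal{G}} \sum_{i=1}^n\bigl(g(X_i)-\mathbb{E}g(X)\bigr).
\]
The class $\mathcal{G}$ is still countable, its elements still take values in $[-1/2,1/2]$, and $\sup_{g\in\mathcal{G}}\mathbb{E}g^2 = \sup_{f\in\mathcal{F}}\mathbb{E}f^2 \le \sigma^2$, so the hypotheses of Bousquet's theorem transfer to $\mathcal{G}$ without loss. Also $\mathbb{E}Z$ is unchanged in law since $Z$ itself is already the two-sided object.

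For step (ii), I would cite Bousquet (2003, Theorem 7.3) applied to $\mathcal{G}$: for every $t\ge 0$,
\[
\mathbb{P}\bigl(Z \ge \mathbb{E}Z + t\bigr) \;\le\; \exp\!\bigl(-v\, h(t/v)\bigr),
\qquad h(u):=(1+u)\log(1+u)-u,
\]
with $v := n\sigma^2 + 2\mathbb{E}Z$. The hard part of the whole argument lies here, and I would not reproduce it: Bousquet's proof uses the modified logarithmic Sobolev inequality of Ledoux (the entropy/Herbst method) applied to $Z$ after a truncation/symmetrization making the functions bounded in $[-1/2,1/2]$, which is exactly why the boundedness hypothesis is stated in that form. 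This is the only nontrivial analytic input, and for a self-contained write-up I would simply defer to the Bousquet reference (or to Boucheron--Lugosi--Massart).

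For step (iii), I would verify the elementary inequality $h(u) \ge \min(u^2/3, u/2)$ for $u\ge 0$: a short convexity/Taylor argument shows $h(u)\ge u^2/3$ on a neighbourhood of $0$ (precisely the region where $u^2/3 \le u/2$, i.e.\ $u\le 3/2$), while for $u\ge 3/2$ one checks $h(u)\ge u/2$ directly by monotonicity of $h(u)-u/2$. Substituting $u=t/v$ and multiplying by $v$ gives
\[
v\, h(t/v) \;\ge\; \min\!\left(\frac{t^2}{3v},\frac{t}{2}\right) \;=\; \min\!\left(\frac{t^2}{3(n\sigma^2+2\mathbb{E}Z)},\frac{t}{2}\right),
\]
which yields the stated bound. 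The main obstacle is step (ii), and in practice it is handled by citation; the genuinely new calculation in this presentation is only the elementary minimum-of-two-terms inequality in step (iii), which is why the excerpt emphasizes it.
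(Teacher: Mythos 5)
Your proposal matches the paper's approach exactly: the authors likewise do not prove the theorem from scratch, but state that it follows from Bousquet's Bennett-form inequality (Theorem 7.3 of the cited reference, also as reformulated in Bull's Equation~(13)) together with the elementary bound $(1+u)\log(1+u)-u \geq \min(u^2/3,\,u/2)$ for $u \geq 0$. The only content you added beyond the paper's citation is the explicit symmetrization $\mathcal F \cup (-\mathcal F)$ to pass from the one-sided to the two-sided supremum and the verification of the elementary inequality, both of which are correct and implicit in the paper's presentation.
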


Here we set $f(X_i) = f_x(X_i) =\frac{1}{2} \one \{ X_i > x\}$, and $\FF = \{ f_x,  x \in I_k \bigcap \mathbb Q \}$. For this class of functions, we have $\frac{1}{n}\sum_{i=1}^n f_x(X_i) = \hat p_x/2$ and $\ee f_x(X_i) = p_x/2$. Also, we can let $\sigma^2 = \frac{1}{4} \sup_{x \in I_k}  [p_x] \leq \min(\frac{(C+C')}{4} e^{-k\tau}, 1/4)$.

The following result is obtained by applying the last theorem to the class $\FF$, and by rescaling. Denoting $\mu_k :=  \ee\sup_{x \in I_k \bigcap \mathbb Q} |\hat p_x - p_x | $, we have
$$
\pp\left( \sup_{x \in I_k\bigcap \mathbb Q} | \hat p_x - p_x | >  \mu_k+t \right) \leq \exp\left( - \frac{1}{12} \left( \min \big(\frac{nt^2}{(\sigma^2+2\mu_k)}, nt\big)\right)\right).
$$

Since the function $F$ is cadlag, and since the rationals are dense in the real line, by construction of $\hat p_x$, the last inequality implies the following corollary.
\begin{corollary}\label{cor:wvv}
Denoting $\mu_k :=  \ee\sup_{x \in I_k \bigcap \mathbb Q} |\hat p_x - p_x | = \ee\sup_{x \in I_k} |\hat p_x - p_x | $, we have
$$
\pp\left( \sup_{x \in I_k} | \hat p_x - p_x | >  \mu_k+t \right) \leq \exp\left( - \frac{1}{12} \left( \min \big(\frac{nt^2}{(\sigma^2+2\mu_k)}, nt\big)\right)\right).
$$
\end{corollary}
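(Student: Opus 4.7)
The corollary is an almost free consequence of the preceding Talagrand bound on $I_k\cap\mathbb Q$ together with the càdlàg structure of the problem, so the plan is to verify that the supremum over $I_k$ and the supremum over $I_k\cap\mathbb Q$ coincide pathwise. No new concentration work is needed.

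First, observe that both $p_x=1-F(x)$ and the naive empirical version $\frac1n\sum_{i=1}^n\mathbf 1_{\{X_i>x\}}$ are right-continuous in $x$. For $p_x$ this is immediate because $F$ is càdlàg. For the empirical version, each indicator $x\mapsto \mathbf 1_{\{X_i>x\}}$ is right-continuous: it takes value $1$ strictly below $X_i$, jumps down to $0$ at $x=X_i$, and stays $0$ afterwards. Consequently the definition $\hat p_x=\lim_{y\in\mathbb Q,\,y\downarrow x}\hat p_y$ for non-rational $x$ coincides with the naive formula, and $x\mapsto |\hat p_x-p_x|$ is almost surely right-continuous on $I_k=[e^k,e^{k+1})$.

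Next I would use a standard density argument: for any right-continuous $g$ on $I_k$ and any $y\in I_k$, there exists a sequence $(q_n)\subset I_k\cap\mathbb Q$ with $q_n\downarrow y$, so $g(y)=\lim_n g(q_n)\le \sup_{x\in I_k\cap\mathbb Q}g(x)$. Taking the supremum over $y\in I_k$ and combining with the obvious reverse inequality yields
\begin{equation*}
\sup_{x\in I_k}|\hat p_x-p_x|=\sup_{x\in I_k\cap\mathbb Q}|\hat p_x-p_x|\qquad\text{almost surely}.
\end{equation*}
This identity gives both the equality of expectations asserted in the definition of $\mu_k$ and the equality of the two events $\{\sup_{x\in I_k}|\hat p_x-p_x|>\mu_k+t\}=\{\sup_{x\in I_k\cap\mathbb Q}|\hat p_x-p_x|>\mu_k+t\}$.

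Finally, applying the Talagrand inequality that has just been stated to the countable class $\mathcal F=\{f_x:x\in I_k\cap\mathbb Q\}$ produces exactly the desired tail bound for $\sup_{x\in I_k\cap\mathbb Q}|\hat p_x-p_x|$, and by the pathwise identity above this is the same as the tail bound for $\sup_{x\in I_k}|\hat p_x-p_x|$. There is no real obstacle; the only care needed is to make sure the right-continuity goes through up to the (open) right endpoint $e^{k+1}$, which is fine because for every $y<e^{k+1}$ the interval $(y,e^{k+1})$ contains rationals.
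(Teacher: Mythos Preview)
Your proposal is correct and follows essentially the same approach as the paper: both arguments show that right-continuity of $p_x$ and of $\hat p_x$ (the latter by construction) forces $\sup_{x\in I_k}|\hat p_x-p_x|=\sup_{x\in I_k\cap\mathbb Q}|\hat p_x-p_x|$ pathwise, so the Talagrand bound for the countable class transfers immediately. Your write-up is more explicit about why the naive empirical formula is right-continuous and about the density argument, but the idea is the same.
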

\begin{proof}
For any $x \not \in \mathbb Q$ (and in $I_k$), we have by definition of $\hat p_x$ and since $F$ (and thus $p$) is cadlag
\begin{align*}
|\hat p_x - p_x| = \lim_{y \in \mathbb Q, y>x, y \rightarrow x} |\hat p_y - p_y|.
\end{align*}
This above equality implies both 
$$\mu_k :=  \ee\sup_{x \in I_k \bigcap \mathbb Q} |\hat p_x - p_x | =  \ee\sup_{x \in I_k} |\hat p_x - p_x |,$$
and
\begin{align*}
\pp\left( \sup_{x \in I_k} | \hat p_x - p_x | >  \mu_k+t \right) &= \pp\left( \sup_{x \in I_k \bigcap \mathbb Q} | \hat p_x - p_x | >  \mu_k+t \right)\\ 
&\leq \exp\left( - \frac{1}{12} \left( \min \big(\frac{nt^2}{(\sigma^2+2\mu_k)}, nt\big)\right)\right).
\end{align*}
This concludes the proof.
\end{proof}

In order to use Corollary \ref{cor:wvv}, we want to bound $\mu_k$, and the following result proves the upper bound for $\mu_k$. 
\begin{lemma}\label{expectsup}
There exists a constant $D_1$ that depends only on $\beta_1$, on a lower bound on $\tau$ and on an upper bound on $C,C'$, and that is such that 
$$\mu_k = \ee \sup_{x \in I_k} | \hat p_x-p_x|  \leq D_1 \sqrt{\frac{e^{-k \tau} }{n}}.
$$
\end{lemma}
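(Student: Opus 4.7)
The plan is to view the family $\mathcal F_k := \{y \mapsto \mathbf 1\{y>x\}: x \in I_k\}$ as a class of indicator functions of right half-lines, which is a VC class of dimension $1$, and then apply a standard uniform-entropy maximal inequality for the empirical process indexed by this class. The natural envelope is $F_k(y) = \mathbf 1\{y>e^k\}$, whose second moment under $F$ is exactly $p_{e^k}$. Because $F \in \set_1$, the second order Pareto condition immediately gives the tail bound
\begin{equation*}
p_{e^k} \;=\; 1-F(e^k)\;\leq\; C e^{-k\tau} + C' e^{-k\tau(\beta_1+1)} \;\leq\; (C+C')\,e^{-k\tau},
\end{equation*}
which is the quantity that will govern the final bound.

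First I would symmetrize: by the usual symmetrization inequality, with $\epsilon_1,\dots,\epsilon_n$ i.i.d.\ Rademacher independent of the $X_i$'s,
\begin{equation*}
\mu_k \;\leq\; 2\,\ee \sup_{x \in I_k} \Bigl|\,\frac{1}{n}\sum_{i=1}^n \epsilon_i\,\mathbf 1\{X_i>x\}\,\Bigr|.
\end{equation*}
Next I would invoke the uniform entropy bound for VC classes: since $\mathcal F_k$ has VC index $2$, there is a universal constant $K_0$ such that for any probability measure $Q$,
\begin{equation*}
N\!\bigl(\varepsilon\,\|F_k\|_{L^2(Q)},\,\mathcal F_k,\,L^2(Q)\bigr) \;\leq\; K_0\,\varepsilon^{-2}\qquad(\varepsilon \in (0,1]),
\end{equation*}
so Dudley's entropy integral $\int_0^1 \sqrt{\log N(\varepsilon,\mathcal F_k,L^2(Q))}\,d\varepsilon$ is bounded by a finite universal constant. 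The classical chaining/maximal inequality for empirical processes indexed by a VC class (e.g.\ the statement in van der Vaart and Wellner, Theorem 2.14.1) then produces a constant $K_1$ with
\begin{equation*}
\ee \sup_{f \in \mathcal F_k}\,\bigl|(P_n-P)f\bigr| \;\leq\; \frac{K_1}{\sqrt n}\,\|F_k\|_{L^2(P)} \;=\; K_1\,\sqrt{\frac{p_{e^k}}{n}}.
\end{equation*}
Combining this with the tail bound on $p_{e^k}$ yields $\mu_k \leq K_1\sqrt{(C+C')}\,\sqrt{e^{-k\tau}/n}$, so the constant $D_1 := K_1\sqrt{C+C'}$ gives the required inequality and depends only on an upper bound for $C+C'$ (and through $K_1$ on the VC index of $\mathcal F_k$, which is universal).

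The only point requiring care is that the uniform entropy maximal inequality can carry a secondary additive remainder of order $1/n$ on top of the leading $\sqrt{p_{e^k}/n}$; this is where the restriction $x \leq B = n^{1/(\tau(2\beta_1+1))}$ enters. In that range $e^{k\tau}\leq n^{1/(2\beta_1+1)}$, so $n e^{-k\tau} \geq n^{2\beta_1/(2\beta_1+1)} \to \infty$, which makes $1/n$ dominated by $\sqrt{e^{-k\tau}/n}$ and absorbs the remainder into $D_1$. I expect this regime check, rather than the entropy estimate itself, to be the main bookkeeping point of the proof.
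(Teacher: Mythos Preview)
Your argument is correct, and it takes a different route from the paper's own proof. The paper develops the bound from scratch: it lays down a grid $\chi_k$ on $I_k$ with $p_{x_i}-p_{x_{i+1}}=c/n$, uses Bernstein to decompose each increment $\hat p_x-\hat p_y-(p_x-p_y)$ into a sub-Gaussian and a sub-exponential piece, then runs an explicit dyadic chaining over the grid to obtain $\ee\sup_{x\in\chi_k}|\hat p_x-p_x|\leq D_5\sqrt{(C+C')e^{-k\tau}/n}$, and finally extends from the grid to all of $I_k$ using the $c/n$ spacing. Your appeal to the VC maximal inequality (van der Vaart--Wellner 2.14.1) collapses all of this into a single step, since half-line indicators form a VC class with universal entropy integral and the envelope $\one\{y>e^k\}$ has $L^2(P)$-norm $\sqrt{p_{e^k}}$. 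What you gain is brevity and a constant that is manifestly universal apart from the $\sqrt{C+C'}$ factor; what the paper gains is self-containment and a transparent tracking of where the Bernstein-type mixed tails enter, which meshes with its subsequent use of Talagrand's inequality. Your remark about the $1/n$ remainder is appropriately cautious: the basic form of Theorem~2.14.1 already gives the clean $\|F_k\|_{L^2(P)}/\sqrt n$ bound without an additive $1/n$, so the regime check $e^{-k\tau}\geq n^{-1/(2\beta_1+1)}$ is not strictly needed for your version, whereas the paper does use it in its Step~4 to absorb the grid-extension error $2c/n$.
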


Let $k$ be such that $\delta_k >0$ and $\delta_k \leq \exp(-D_1^2/12)$. By plugging the result of Lemma~\ref{expectsup} into Corollary~\ref{cor:wvv}, with $t =12 \left( \sqrt{\frac{e^{- k \tau} \log(1/\delta_k)}{n}} + \frac{\log(1/\delta_k)}{n} \right)$, we obtain the inequality
\begin{equation}\label{largedev1}
\pp \left( \sup_{x \in I_k} | \hat p_x - p_x | > D_2 \left( \sqrt{\frac{e^{-k\tau} \log (1/\delta_k)}{n}} + \frac{\log(1/\delta_k)}{ n}\right) \right) \leq \delta_k,
\end{equation}
where $D_2 =\max(12, D_1)$.

By multiplying by $x^{\tau}$ (since on $I_k$, $x^\tau\leq e^{(k+1)\tau}$) in the probability in (\ref{largedev1}), we have 
\begin{equation*}
\pp \left( \sup_{x \in I_k} x^{\tau} | \hat p_x - p_x | >  eD_2 \left( \sqrt{\frac{e^{k\tau} \log (1/\delta_k)}{n}} + \frac{e^{k\tau}\log(1/\delta_k)}{ n}\right)  \right) \leq \delta_k.
\end{equation*}

Let $\Delta>0$. Set $\delta_k =\frac{\Delta \exp(-\exp((K-k)\tau))}{E}$, where $E = \sum_{k=1}^{K} \exp(-\exp((K-k)\tau))<\infty$ and depends on $\tau$ only (since it is a hypergeometric sum). Plugging this $\delta_k$ in the last inequality yields
\begin{align*}
\pp \Big( \sup_{x \in I_k} x^{\tau} | \hat p_x - p_x | &>  eD_2 \Big( \sqrt{\frac{e^{K\tau} \log (E/\Delta)}{n}} + \frac{e^{K\tau}\log(E/\Delta)}{ n}\Big)   \Big)\\ 
&\leq \frac{\Delta \exp(-\exp((K-k)\tau))}{E},
\end{align*}
which implies by definition of $K$ and by denoting $\zeta:=n^{-\frac{\beta_1}{2\beta_1+1}} \sqrt{\log (E/\Delta)} +n^{-\frac{2\beta_1}{2\beta_1+1}} \log (E/\Delta)$
\begin{align*}
\pp \Big( \sup_{x \in I_k} x^{\tau} | \hat p_x - p_x | &>  2eD_2 \zeta  \Big) \leq \frac{\Delta \exp(-\exp((K-k)\tau))}{E}.
\end{align*}

By combining the last equation for all $k=1, \ldots, K$, we obtain
\begin{equation*}
\pp \left( \sup_{x \leq n^{\frac{1}{\tau(2\beta_1+1)}}} x^{\tau} | \hat p_x - p_x | >  2eD_2\zeta   \right) \leq \Delta.
\end{equation*}
This concludes the proof of Lemma \ref{lem:lardev}.
\end{proof}

\begin{proof}[Proof of Lemma \ref{expectsup}]

Here we use the same notation for $B, K, I_k$ used in the proof of Lemma \ref{lem:lardev}. Let $k \leq K$. Consider the grid of points of $I_k$, that we write $\chi_k := (x_1, \ldots, x_i,\ldots, x_{\Upsilon_k-1})$, that are rationals, and that are such that $p_1 = e^k$ (or arbitrarily close rational to $e^k$) and otherwise
\begin{align*}
p_{x_i} - p_{x_{i+1}} = c/n,
\end{align*}
(or arbitrarily close rationals that verify this) until we reach $e^{k+1}$, and where $c$ is the smallest constant larger than $1$ such that $\log(\Upsilon_k)$ is an integer. 

\vspace{5pt}
\textbf{Step 1.} We claim that for any $x,y \in \chi_k^2$ such that $x \leq y$,  the tail probability of $\hat p_x - \hat p_y$ is upper bounded by an sub-exponential bound plus a sub-Gaussian bound with a distance function $d(x,y) =\sqrt{ \frac{p_x-p_y}{n}}$. That is, by denoting $\mathcal{U}_{x,y} =  \hat p_x- \hat p_y-(p_x-p_y)$,
$$
P\big(|\mathcal{U}_{x,y}| \geq u\big) \leq 2\exp\left(-\frac{u}{d(x,y)} \right) + 2\exp\left(-\frac{1}{2}\frac{u^2}{d(x,y)^2} \right).
$$

Note that $\hat p_x -\hat p_y = \frac{1}{n}\sum_{i=1}^n \one\{ x \leq X_i \leq y\}$ is the average of Binomial random variable with parameters $(n, p_x- p_y)$ where $p_x - p_y \geq 1/n$ since $x,y$ are points of $\chi_k$.
Then, by Bernstein inequality,
\begin{align*}
P(| \mathcal{U}_{x,y}  | \geq u) &\leq 2\exp \left(-\frac{1}{2} \frac{nu^2}{(p_x-p_y) + \frac{u}{3}}\right) \\
 &\leq 2\exp \left(-\frac{3}{2} nu\right) + 2\exp \left(-\frac{1}{2} \frac{nu^2}{(p_x-p_y)}\right).
\end{align*}
This implies (since $p_x - p_y \geq 1/n$)
\begin{align}\label{eq:subexpsubgauss}
P\left( |\mathcal{U}_{x,y} | \geq u \right) \leq 2\exp\left( - \frac{\sqrt{n}u}{\sqrt{p_x-p_y}}\right) + 2\exp \left(-\frac{1}{2} \frac{nu^2}{(p_x-p_y)}\right).
\end{align}
Thus, the claim is proved.

\vspace{5pt}
\textbf{Step 2.} In this step, we bound the expectation of the supremum when $x$ and $y$ can take possible $m$ number of values $x_j$ and $y_j$ in $\chi_k$, that is, $\ee\big(\sup_{j \leq m} | \hat p_{x_j} - \hat p_{y_j}-(p_{x_j}-p_{y_j})| \big)$ such that $p_x - p_y \leq d^2$.

Equation~\eqref{eq:subexpsubgauss} implies in particular that for $(x,y)\in \chi_k^2$, we can express $\mathcal{U}_{x,y}$ as a sum of a sub-Gaussian random variable $U_{x,y}$ plus a sub-exponential random variable $V_{x,y}$, i.e.~$\mathcal{U}_{x,y} = U_{x,y} + V_{x,y}$ and that are such that
\begin{align*}
\| U_{x,y}\|_{\Psi_2} \leq D_3\sqrt{\frac{p_x-p_y}{n}}, \ \ \ \|V_{x,y}\|_{\Psi_1} \leq D_4\sqrt{\frac{p_x-p_y}{n}}
\end{align*}
where $\| \cdot \|_{\Psi_1}$ and $\| \cdot \|_{\Psi_2}$ are the Orlicz norms $1$ and $2$.

Consider $m$ pairs $(x_j,y_j)\in \chi_k^2$ such that $p_x - p_y \leq d^2$. By definition of the Orlicz norms,
\begin{align*}
\|\sup_{j \leq m} |U_{x_j,y_j}|\|_{\Psi_2} \leq D_3\Psi_2^{-1}(m)\sqrt{\frac{p_x-p_y}{n}}\leq D_3\sqrt{2\log(m) + 1}\sqrt{\frac{d^2}{n}},
\end{align*}
and
\begin{align*}
\|\sup_{j \leq m} |V_{x_j,y_j}|\|_{\Psi_1} \leq D_4\Psi_1^{-1}(m)\sqrt{\frac{p_x-p_y}{n}} \leq D_4(\log(m) + 1)\sqrt{\frac{d^2}{n}}.
\end{align*}

This implies that
\begin{align*}
\mathbb P \Big( \sup_{j \leq m} |U_{x_j,y_j}| \geq u \Big) \leq \exp\left(-u^2\frac{n}{2D_3^2d^2(2\log(m) + 1)}\right),
\end{align*}
and
\begin{align*}
\mathbb P \Big( \sup_{j \leq m} |V_{x_j,y_j}| \geq u \Big) \leq \exp\left(-u\sqrt{\frac{n}{d^2}}\frac{1}{D_4(\log(m) + 1)} \right).
\end{align*}

These two equations give the following results, using $\mathbb E X = \int_0^\infty \mathbb P(X \geq u) du$, that
\begin{align*}
\mathbb E \Big(\sup_{j \leq m} |U_{x_j,y_j}| \Big) &\leq \int_0^{\infty}\exp\left(-u^2\frac{n}{2D_3^2d^2(2\log(m) + 1)} \right)du\\
&=\frac{\sqrt{\pi}}{2} \sqrt{\frac{2D_3^2d^2(2\log(m) + 1)}{n}} <1.5 \sqrt{\frac{D_3^2d^2(2\log(m) + 1)}{n}},
\end{align*}
and
\begin{align*}
\mathbb E\Big( \sup_{j \leq m} |V_{x_j,y_j}| \Big) &\leq \int_0^{\infty} \exp \left(-u\sqrt{\frac{n}{2d^2}}\frac{1}{D_4(\log(m) + 1)}\right) du\\
&=D_4 (\log(m) + 1)\sqrt{\frac{2d^2}{n}}<1.5D_4(\log(m) + 1)\sqrt{\frac{d^2}{n}}.
\end{align*}

Combining the above ideas, we have
\begin{align}
\mathbb E \Big(\sup_{j \leq m} |\hat p_{x_j}-\hat p_{y_j}-(p_{x_j}-p_{y_j})| \Big) &\leq \mathbb E \Big(\sup_{j \leq m} |U_{x_j,y_j}| \Big) + \mathbb E \Big(\sup_{j \leq m} |V_{x_j,y_j}| \Big) \nonumber\\
&\leq 3D_4 (2\log(m) + 1)\sqrt{\frac{d^2}{n}}. \label{eq:supcoucou}
\end{align}

\textbf{Step 3.} Now, we bound $\ee \sup_{x \in \chi_k}| \hat p_x - p_x|$ using the results in Step 2 by a chaining argument. For any $1 \leq i\leq \log_2(\Upsilon_k)$, we define chaining set $A_i$ by a sequence of finite subsets
$$A_i = \{x_{\lfloor j \Upsilon_k/2^i  \rfloor}, \hspace{2mm} \text{for} \hspace{2mm} j\in \mathbb N, \ 0<j < 2^i\}.$$
$A_1$ contains only one element (e.g. if $\Upsilon_k/2$ is an integer, $A_1 = \{x_{\Upsilon_k/2}\}$), and
the cardinality of $A_i$ is $2^i$. Note that $A_i \subseteq \chi_k$, and the last set $A_{\log_2(\Upsilon_k)}$ becomes $\{x_1, \ldots, x_{\Upsilon_k-1}\} =: \chi_k$.
 Also by definition of these sets, for any point $x \in \chi_k$, there exists a chain $(y_1, \ldots, y_{\Upsilon_k})$ such that $y_i\in A_i$ and 
$$|p_{y_{i+1}}-p_{y_i}| \leq \frac{|p_{e^k} - p_{e^{k+1}}|}{2^{i+1}} \leq (C+C') \exp(-k\tau) 2^{-i},$$
and such that $x = y_{\Upsilon_k}$. Note that given $y_{i+1}$, there is only two choices of $y_i$ which are possible (because of the previous equation). Let us write $\BB_{i+1}$ for such possible pairs (i.e.~that are at a distance less than $|p_{e^k} - p_{e^{k+1}}|/ 2^{i+1}$), and by definition there are less than $2\times 2^{i+1}$ such pairs.

By using the triangle inequality,
$|\hat p_x - p_x|\leq |\hat p_{y_1} - p_{y_1}| + \sum_{2 \leq i \leq \Upsilon_k} |\hat p_{y_{i+1}}-\hat p_{y_i} - (p_{y_{i+1}}-p_{y_i})|.$ It follows that by denoting $\mathcal{U}_i = \hat p_{y_{i+1}}-\hat p_{y_i} - (p_{y_{i+1}}-p_{y_i})$,
\begin{align*}
\mathbb E \big[ \sup_{x\in \chi_k} |\hat p_x - p_x|\big]&\leq  \mathbb E \sup_{\text{\scriptsize chain} \hspace{1mm}(y_1, \ldots, y_{\Upsilon_k})} \Big(|\hat p_{y_1} - p_{y_1}| + \sum_{1 \leq i \leq \Upsilon_k} |\mathcal{U}_i|\Big)\\
&\leq \mathbb E\sup_{y_1 \in A_1}|\hat p_{y_1} - p_{y_1}| + \sum_{1 \leq i \leq \Upsilon_k}\mathbb E \sup_{(y_{i+1}, y_i) \in \BB_{i+1}} |\mathcal{U}_i|.
\end{align*}

Using Equation~\eqref{eq:supcoucou} on $\BB_{i+1}$ which contains at most $2\times 2^{i+1}$ pairs satisfying $p_{y_{i+1}}-p_{y_i} \leq (C+C') \exp(-k\tau) 2^{-i}$, we get
\begin{align*}
\ee \Big[ \sup_{(y_{i+1}, y_i) \in \BB_{i+1}} |\mathcal{U}_i| \Big]&\leq 3D_4 (2(i+2) + 1)\sqrt{\frac{(C+C') \exp(-k\tau) 2^{-i}}{n}}\\
&\leq 9D_4\sqrt{C+C'}\sqrt{\frac{ \exp(-k\tau)}{n}} (i+2) 2^{-i/2}.
\end{align*}

Also since there is only one element in $A_1$, we have
$$ \mathbb E \sup_{y_1 \in A_1}|\hat p_{y_1} - p_{y_1}| \leq \mathbb E|\hat p_{y_1} - p_{y_1}|\leq \sqrt{\mathbb E (\hat p_{y_1} - p_{y_1})^2} \leq \sqrt{\frac{(C+C') \exp(-k\tau)}{n}}.$$

By plugging the above both equations in the chaining equation, we obtain
\begin{align*}
\mathbb E \sup_{x\in \chi_k} |\hat p_x - p_x| &\leq \sqrt{\frac{(C+C') \exp(-k\tau)}{n}} \\
& \ \ \ \ \ \  + \sum_{1 \leq i \leq \Upsilon_k} 9D_4\sqrt{C+C'}\sqrt{\frac{ \exp(-k\tau)}{n}} (i+2) 2^{-i/2}\\
&\leq \sqrt{C+C'}\sqrt{\frac{ \exp(-k\tau)}{n}} \left(1+9D_4\sum_{i=1}^{\infty} (i+2) 2^{-i/2} \right)\\
&\leq D_5\sqrt{C+C'}\sqrt{\frac{ \exp(-k\tau)}{n}},
\end{align*}
where $D_5 =  \left(1+9D_4\sum_{i=1}^{\infty} (i+2) 2^{-i/2} \right)<\infty$.

\textbf{Step 4.} In this final step, we extend the above inequality to any $x \in I_k$ not necessarily on the grid point.

Note that for any $x \in I_k$, there exist $\underline x, \bar x \in \chi_k^2$ such that $\underline x \leq x \leq \bar x$, $p_{\bar x} \leq p_x \leq p_{\underline x}$ and $\hat p_{\bar x} \leq \hat p_{x} \leq \hat p_{\underline x}$ where $p_{\underline x}-p_{\bar x} = c/n$.
Then for any $x \in I_k$, 
\begin{align*}
\mathbb E |\hat p_x - p_x| &\leq \mathbb E \Big((\hat p_{\underline x}-p_{\bar x})\one\{ \hat p_{x} \geq p_{x} \} + (p_{\underline x}- \hat p_{\bar x})\one\{ \hat p_{x} \leq p_{x} \}  \Big)\\
&\leq \mathbb E |\hat p_{\underline x}-p_{\bar x}| + \mathbb E |p_{\underline x}- \hat p_{\bar x}| \\
&\leq \mathbb E |\hat p_{\underline x}-p_{\underline x}| + \mathbb E |p_{\bar x}- \hat p_{\bar x}|  + 2\mathbb E |p_{\bar x} - p_{\underline x} | \\
&\leq 2D_5\sqrt{C+C'}\sqrt{\frac{ \exp(-k\tau)}{n}} + \frac{2c}{n}\\
&\leq 4D_5\sqrt{C+C'}\sqrt{\frac{ \exp(-k\tau)}{n}},
\end{align*}
where the last inequality is followed since for $k \leq K$, we know that $\exp(-k\tau) \geq \exp(-K\tau) = n^{-1/(2\beta_1+1)} \geq 1/n$. This concludes the proof.
\end{proof}

\subsection{Proof of Theorem~\ref{thm:test2}}

Let $X_1, \ldots, X_n$ be an i.i.d. random sample from a distribution $F \in \mathcal{S}_1$.

Let $\hat \tau$ be an estimator of $\tau$ such that for any $\tau\in \II_1$, we have with probability at least $1-\eta$ 
\begin{equation}\label{taulargedev}
|\hat \tau - \tau| \leq n^{-\frac{\beta_1}{2\beta_1+1}}c_1(\eta),
\end{equation}
where $c_1$ is a function defined on $(0,1)$.
For instance, Theorem 1 in \citep{cheng2001} implies that with Hill estimator $\hat \tau_H$, we can choose
(asymptotically) $c_1(\eta)$ as $q_{1-\eta/2} \hat \tau_H$, where $q_{1-\alpha/2}$ is such as $\mathbb P(|\mathcal N(0,1)| \geq q_{1-\alpha/2}) = \alpha$ (where $\mathcal N$ is the standard Gaussian distribution). See also Theorem 3.6 and Remark 3.7 of~\citet{carpentierkim} for another estimator for which $c_1(\eta) \sim \sqrt{\log(1/\eta)}$ is well defined with a finite $n$.

Also we define $\hat C$ as an estimator of $C$ such that for any $C\in \II_2$, we have with probability at least $1-\eta$ 
\begin{equation}\label{Clargedev}
|\hat C - C| \leq \log(n)n^{-\frac{\beta_1}{2\beta_1+1}}c_2(\eta),
\end{equation}
where $c_2$ is a function defined on $(0,1)$. 

For instance, we can define $\hat C$ as follows, 
$$
\hat C = n^{\frac{1}{2\beta_1+1}}  \hat p_{\hat B},
$$
where  
\begin{equation}\label{eq:cc2}
\hat B = n^{1/  \hat \vartheta}, \ \  \hat \vartheta = (\hat \tau + n^{-\frac{\beta_1}{2\beta_1+1}}c_1(\eta))(2\beta_1+1).
\end{equation}
From (\ref{taulargedev}), for a sufficiently large $n$ (such that $2\log(n)n^{-\frac{\beta_1}{2\beta_1+1}}c_1(\eta)/\tau \leq 1/2$, for any $\tau \in \mathcal I_1$)), we know with probability $1-\eta$, 
\begin{equation}\label{rangeBhat}
\frac{1}{2} n^{\frac{1}{\tau(2\beta_1+1)}} \leq \left(1 - \frac{2\log(n)n^{-\frac{\beta_1}{2\beta_1+1}}c_1(\eta)}{\big(\tau(2\beta_1+1)\big)} \right) n^{\frac{1}{\tau(2\beta_1+1)}} \leq \hat B \leq n^{\frac{1}{\tau(2\beta_1+1)}}.
\end{equation}
\color{black}
To prove such $c_2(\eta)$ exists in (\ref{Clargedev}), we first split $\hat C -C$ into the two summations,
$$\hat C - C =  n^{\frac{1}{2\beta_1+1}}  \hat p_{\hat B} - C
= n^{\frac{1}{2\beta_1+1}} (\hat p_{\hat B} - p_{\hat B}) + (n^{\frac{1}{2\beta_1+1}}p_{\hat B}-C) =:(*)+(**).$$
Then, using Lemma \ref{lem:lardev} and (\ref{rangeBhat}), with probability $1-2\eta$,
$$(*) := n^{\frac{1}{2\beta_1+1}} (\hat p_{\hat B} - p_{\hat B}) \leq 2Dn^{-\frac{\beta_1}{2\beta_1+1}}\log(1/\eta). $$
The second term $(**)$ can be bounded by the definition of the second order Pareto distributions,
\begin{align}
(*)&= n^{\frac{1}{2\beta_1+1}}p_{\hat B}-C \leq
n^{\frac{1}{2\beta_1+1}} \Big(C \hat B^{-\tau} + C' \hat B^{-\tau (\beta_1+1)} \Big) - C \nonumber \\
&=  C \left( n^{\frac{1}{2\beta_1+1}} \hat B^{-\tau} -1\right) + C' n^{\frac{1}{2\beta_1+1}}\hat B^{-\tau(\beta_1+1)}  \nonumber \\
&\leq  C \Big( n^{\frac{ n^{-\frac{\beta_1}{2\beta_1+1}}2c_1(\eta)/\tau}{2\beta_1+1}} -1 \Big) + C' n^{-\frac{\beta_1}{2\beta_1+1}} n^{\frac{n^{-\frac{\beta_1}{2\beta_1+1}} 2c_1(\eta)}{\tau(2\beta_1+1)}} \label{calculation}\\
&\leq  2 (C+C')\log(n) \frac{ n^{-\frac{\beta_1}{2\beta_1+1}}c_1(\eta)/\tau}{2\beta_1+1}, \label{calculation2}
\end{align}
for $n$ large enough so that $2n^{-\frac{\beta_1}{2\beta_1+1}}c_1(\eta)/\tau \leq 1/2$, where the first term in (\ref{calculation}) is obtained with probability $1-\eta$ as follows,
\begin{align*}
n^{\frac{1}{2\beta_1+1}} \hat B^{-\tau}&\leq n^{\frac{1}{2\beta_1+1}} n^{\frac{-\tau}{\big(\tau + 2n^{-\frac{\beta_1}{2\beta_1+1} }c_1(\eta)\big) (2\beta_1+1)}}\\
&\leq n^{\frac{1}{2\beta_1+1}} n^{-\frac{1}{2\beta_1+1} \big(1-\frac{n^{-\frac{\beta_1}{2\beta_1+1} }2c_1(\eta)}{\tau} \big)}.
\end{align*}
The second term in (\ref{calculation}) is upper bounded similarly. Then (\ref{calculation2}) is followed by the taylor expansion.
Using the exact same ideas for the lower bound of $\hat C-C$, we have proved (\ref{Clargedev}) with probability $1-\eta$,
 \begin{equation}\label{c2function}
 c_2(\eta) = 2D\log(3/\eta)  + 2(C+C')\frac{c_1(\eta/3)/\tau}{2\beta_1+1}.
 \end{equation}

Using large deviation results from (\ref{taulargedev}) and Lemma \ref{lem:lardev}, we can obtain with probability at least $1-2\eta$, 
for any $x \leq n^{\frac{1}{\tau(2\beta_1+1)}} =: B$,
\begin{align}
|x^{\hat \tau} \hat p_x - & x^{\tau} \hat p_x| = x^{\tau} \hat p_x |x^{\hat \tau - \tau} - 1| \leq x^{\tau} \hat p_x|x^{n^{-\frac{\beta_1}{2\beta_1+1}}c_1(\eta)} - 1|\nonumber \\
&\leq 2x^{\tau} \hat p_x \log(x) n^{-\frac{\beta_1}{2\beta_1+1}}c_1(\eta)\nonumber \\
&\leq \frac{4}{\tau(2\beta_1+1)} (C+C' + D \log(1/\eta)) \log(n) n^{-\frac{\beta_1}{2\beta_1+1}}c_1(\eta)=:(\star). \label{res1}
\end{align}

Large deviation property for $\hat \tau$ in (\ref{taulargedev}) implies also that with probability at least $1-\eta$, for  any $x \leq n^{\frac{1}{\tau(2\beta+1)}}$, we have
\begin{align}
|C' x^{-\hat \tau\beta_0} - C' x^{-\tau\beta_0}| &\leq C'x^{-\tau\beta_0} | x^{(\tau -\hat \tau)\beta_0} - 1| \nonumber\\
&\leq C'x^{-\tau\beta_0} | x^{n^{-\frac{\beta_1}{2\beta_1+1}}c_1(\eta)\beta_0} - 1| \nonumber\\
&\leq C'x^{-\tau\beta_0} \log(x)n^{-\frac{\beta_1}{2\beta_1+1}}c_1(\eta)\beta_0 \nonumber\\
&\leq \frac{C'}{\tau(2\beta_1+1)}\log(n)n^{-\frac{\beta_1}{2\beta_1+1}}c_1(\eta)\beta_0. \label{Cprimelast}
\end{align}

Combining these equations (\ref{res1}), (\ref{Cprimelast}), and (\ref{Clargedev}), with probabibility at least $1-3\eta$, we have
\begin{align*}
&\Big|\big(|x^{\hat \tau} \hat p_x - \hat C| - C' x^{-\hat \tau\beta_0}\big) - \big(|x^{\tau} \hat p_x - C| - C' x^{-\tau\beta_0}\big) \Big|\\
&\leq (\star) + \frac{C'\beta_0c_1(\eta)}{\tau(2\beta_1+1)} \log(n)n^{-\frac{\beta_1}{2\beta_1+1}} + 
 \log(n)n^{-\frac{\beta_1}{2\beta_1+1}}c_2(\eta)\\
&\leq E(\eta)\log(n)n^{-\frac{\beta_1}{2\beta_1+1}},
\end{align*}
where 
$$
E(\eta) = \frac{4(C+C' D \log(1/\eta)) c_1(\eta)}{\tau(2\beta_1+1)} +\frac{C' \beta_0c_1(\eta)}{\tau(2\beta_1+1)}+  c_2(\eta).
$$
This implies, together with Lemma~\ref{lem:lardev}, that for any $x \leq n^{\frac{1}{\tau(2\beta_1+1)}}$ with probabibility at least $1-3\eta$
\begin{align}
&\Big|\big(|x^{\hat \tau} \hat p_x - \hat C| - C' x^{-\hat \tau\beta_0}\big) - \big(|x^{\tau} p_x - C| - C' x^{-\tau\beta_0}\big) \Big|\nonumber\\
&\leq (E(\eta)\log(n) + D\log(1/\eta) )n^{-\frac{\beta_1}{2\beta_1+1}}.\label{eq:coucouhaha}
\end{align}

  Based on these previous results, with $\hat \tau$ satisfying (\ref{taulargedev}), $\hat C$ satisfying (\ref{Clargedev}), and $\hat B$ as in (\ref{eq:cc2}), \color{black} we propose the following test statistic
\begin{equation}\label{teststat}
T_n = \sup_{x \leq \hat B} \Big(|x^{\hat \tau} \hat p_x - \hat C| - C' x^{-\hat \tau\beta_0} \Big).
\end{equation}
The test is of the form (similar to the case in Theorem \ref{thm:test1})
\begin{align*}
\Psi_n = \one\{ T_n \geq \rho_n/2\},
\end{align*}
where 
\begin{equation}\label{rhondefinition}
 \rho_n \geq \max\left(2(E(\eta)\log(n)+D\log(1/\eta)), 2C' \right) n^{-\frac{\beta_1}{2\beta_1+1}}.
\end{equation} 
Recall that $H_0 : F \in \bigcup_{\tau \in \II_1, C \in \II_2} \set_0(\tau, C)$ and $H_1 : F \in \cup_{ \tau \in \II_1, C \in \II_2} \tilde \set_1 (\tau, C, \rho_n)$. Again, we reject the null if $\Psi_n = 1$, and vice versa.

Set $\rho_n \geq 2C' n^{-\frac{\beta_1}{2\beta_1+1}}$. By Lemma~\ref{lem:doublehypo}, we know that
\begin{itemize}
\item[(i)] $F \in H_0$ implies $\sup_x \Big(|x^{\tau} p_x - C| - C' x^{-\tau\beta_0}\Big) \leq 0$.
\item[(ii)] $F \in H_1$ implies that $\sup_{x \leq  n^{\frac{1}{\tau(2\beta_1+1)}}} \Big(|x^{\tau} p_x - C| - C' x^{- \tau\beta_0}\Big) \geq \rho_n$.
\end{itemize}
This implies together with Equation~\eqref{eq:coucouhaha} that with probabibility at least $1-4\eta$
\begin{itemize}
\item[(i)] $F \in H_0$ implies $T_n \leq \sup_{x\leq n^{\frac{1}{\tau(2\beta_1+1)}}} \Big(|x^{\hat \tau} \hat p_x - \hat C| - C' x^{-\hat \tau\beta_0}\Big) < \rho_n/2$.
\item[(ii)] $F \in H_1$ implies that $T_n \geq \sup_{x \leq \frac{1}{2} n^{\frac{1}{\tau(2\beta_1+1)}}} \Big(|x^{\hat \tau} \hat p_x - \hat C| - C' x^{- \hat \tau\beta_0}\Big) \geq \rho_n/2$.
\end{itemize}
Using Equation~\eqref{rangeBhat}, we have $$\sup_{F \in H_0} \pp_F \Psi_n + \sup_{F \in H_1} (1-\Psi_n) \leq 1-9\eta.$$
This concludes the proof by setting $\alpha = 9\eta$.

\subsection{Proof of Theorem \ref{thm:CI2}} \label{subsec:CI2}

\paragraph{\bf{Proof of [A.]}}
We use the same $\eta$ and test statistics $T_n$, and test $\Psi_n$ from the previous section. 

Denote $G(\beta) := c_1(\eta)n^{-\beta/(2\beta+1)}$ where $c_1(\eta)$ is defined such that for a sample from $F \in H_i$, we have $|\hat \tau - \tau| \leq c_1(\eta)n^{-\beta_i/(2\beta_i+1)}$ (when $i=0$ or $i=1$).
Now we consider the confidence interval based on the test: 
$$
C_n = \left\{ \tau' : | \hat \tau - \tau' | \leq G(\beta_0) (1-\Psi_n) + G(\beta_1) \Psi_n \right\},
$$

Note that $\beta_0 > \beta_1$ means $G(\beta_0) < G(\beta_1)$, and recall that 
$$
\PP_n = \{ F : F \in H_0 \cup H_1\} = \Big(\bigcup_{\tau \in \II_1, C \in \II_2}\set_0(\tau,C)\Big) \bigcup \Big(\bigcup_{\tau \in \II_1, C \in \II_2}\tilde \set_1(\tau,C, \rho_n)\Big).
$$

First, under the null, 
\begin{align*}
\sup_{F \in \bigcup_{ \tau >0, C>0} \set_0(\tau,C) \bigcap \PP_n} \pp_F(|C_n| > G(\beta_0)) &\leq
\sup_{F \in \bigcup_{ \tau \in \II_1, C \in \II_2} \set_0(\tau,C)} \pp_F(\Psi_n=1)\\
&\leq 4\eta=\frac{4\alpha}{9}
\end{align*}
by definition of $C_n$.

Second, under the alternative,
$$
\sup_{F \in \bigcup_{ \tau >0, C>0} \set(\tau, \beta_1, C, C') \bigcap \PP_n} \pp_F(|C_n| > G(\beta_1)) 
=0.
$$

The third condition in Definition of \ref{def:CI} is shown using the last calculation in the proof of Theorem 2,
\begin{align*}
\inf_{F \in \PP_n} \pp_F (\tau \in C_n)  &\geq \min\left( \inf_{F \in H_0} \pp_F(\tau \in C_n), \inf_{F \in H_1} \pp_F(\tau \in C_n)\right)\\
&\geq\min \Big( \inf_{F \in H_0} \pp_F\left( \hat \tau \in \tau \pm G(\beta_0)\right) \pp_F(\Psi_n =0), \\
&\hspace{40pt}\inf_{F \in H_1} \pp_F\left( \hat \tau \in \tau \pm G(\beta_1)\right)\pp_F(\Psi_n = 1) \Big) \\
&\geq 1-5\eta = 1-\frac{5\alpha}{9}.
\end{align*}

Thus, we have proved the existence of an adaptive and uniform confidence interval for $\PP_n$ (by checking the two conditions (\ref{def2.2.1}) and (\ref{def2.2.3})).

\paragraph{\bf{Proof of [B.]}}

The proof depends on the lower bound construction, which is previously considered similarly in the papers~\citep{drees2001,novak2013,carpentierkim}.

Let $n \geq 2$. Let $\tau>0$, $\upsilon>0$, $\beta_0>\beta_1 >0$, and we define
$ B = n^{\frac{1}{\tau(2\beta_1 + 1)}}$, $t  =  \upsilon B^{-\tau \beta_1} =  \upsilon n^{-\frac{\beta_1}{2\beta_1 + 1}}$, $\tau_0 =\tau$, and $\tau_1 = \tau-t  = \tau -  n^{-\frac{\beta_1}{2\beta_1+1}}$.
Then we consider the Pareto with $\tau$ parameter as $F_0$ such as $1- F_0(x) = x^{-\tau}$, and for $F_1$ we perturb the tail larger than $B$ so that it has heavier tail. That is, we let $1- F_1(x) = x^{-\tau} \one \{ 1\leq x \leq B\} + B^{-t}x^{-\tau + t} \one \{ x > B\}$.

Note that $F_0 \in \set_0(\tau, 1) \subseteq \set_0(\tau_1, B^{-t})$ and $F_1 \in \tilde  \set_1(\tau_1, B^{-t}, \rho_n)$. Let $\delta>0$. Then it is known \citep[as proved in][]{drees2001,novak2013} that there exists no $\delta-$uniformly consistent test for distinguishing between $F_0$ and $F_1$ whenever $n$ is large enough, for small enough $\upsilon$.

Now, we use a contradiction to prove our claim. Suppose $0<\alpha<1/3$ and $3\alpha = \delta$. Assume that there exists an $\alpha$-uniform and adaptive confidence interval $C_n$ for the first order parameter when $\mathcal P_n = \{F_0, F_1\}$. Then we consider the test $\Psi_n$ such that
\begin{align*}
\Psi_n = 1 - \one\{\tau \in C_n\}\one\{|C_n| \leq D n^{-\frac{\beta_0}{2\beta_0+1}}\}.
\end{align*}

Then since $C_n$ is uniform and adaptive, we have
\begin{align*}
\ee_{F_0} (\Psi_n) &\leq \ee_{F_0} \one\{\tau \not\in C_n\} + \ee_{F_0} \one\{|C_n| > D n^{-\frac{\beta_0}{2\beta_0+1}}\}\\
&\leq 2\alpha.
\end{align*}

Also we have
\begin{align*}
\ee_{F_1} (1-\Psi_n) &\leq \ee_{F_1} \one\{\tau \in C_n\}\one\{|C_n| \leq D n^{-\frac{\beta_0}{2\beta_0+1}}\}\\
&\leq \ee_{F_1} \one\{\tau \in C_n\}\\
&\leq \alpha.
\end{align*}
This implies that $\Psi_n$ is $3\alpha$ uniformly consistent for $\mathcal P_n = \{F_0, F_1\}$. This contradicts the fact that no $\delta-$uniformly consistent test exists. This concludes the proof.

%

\subsection{Proof of the lower bound in Theorem~\ref{thm:test1} (Proof of [B.])}
Here, we prove the lower bound by constructing two distributions $F_0$ and $F_1$ in the model with the specific $\rho_n 
\sim n^{-\beta_1/(2\beta_1+1)}$ and by proving the distance between $F_0^n$ and $F_1^n$ is close enough so that these two are not distinguishable as $n \rightarrow \infty$ (so that $\alpha$-uniform consistent test does not exist).

Let $\tau, \beta_1>0$. Let $F_0$ be the distribution such that for any $x \geq 1$, we have
$$1 - F_0(x) = x^{-\tau}.$$ Note that $F_0 \in \set(\tau,\infty,1,0)$.


Let $\upsilon >0$ be a small constant. Now, we construct another continuous distribution $F_1$. Let 
$B = n^{1/(\tau(2\beta_1+1))}$, $t = \upsilon B^{-\tau \beta_1} = \upsilon n^{-\beta_1/(2\beta_1+1)}$, and let $
C'^2 \leq \frac{2\beta_1+1}{(\beta_1+1)^2} \frac{1}{3\tau^2}$.
Also we suppose that $n$ is large enough such that $t \leq \min( \frac{\sqrt{3}\upsilon \tau}{2\sqrt{2\beta_1+1}}, \frac{\tau}{4})$. 
Then, consider $B_1$ such that $B < B_1 = (1+\tilde C)B$ where $\tilde C >0$ (later it will be chosen as the smallest $\tilde C$ such that $F_1$ is continuous).
More precisely, 
\begin{align}\label{defF_1}
1-F_1(x) =& x^{-\tau} \one \{1 \leq x \leq B \} + B^{-t} x^{-\tau+t} \one\{ B<x<B_1\} \\ \nonumber
& + (x^{-\tau} + C' x^{-\tau(1+\beta_1)}) \one\{x \geq B_1\}.
\end{align}
As we can see in the definition (\ref{defF_1}), $F_1$ is defined to be slightly perturbed distribution from $F_0$ such that it is exactly Pareto with parameter $\tau$ on the region $x \leq B$, and it attains the upper bound for the second order Pareto tails after $B_1$, but in the middle region $B \leq x \leq B_1$ it only satisfies exactly Pareto with parameter $\tau-t$.


Equivalently, we are testing the following two hypotheses,
$$
H_0 : F=F_0 \quad \textbf{vs.} \quad H_1 : F = F_1,
$$
and show that there does not exist uniform consistent test.
Let $\beta_0 > \beta_1$. By definition, $F_0 \in \set(\tau, \beta_0, 1, C')$ (since it is exactly pareto). 

\vspace{5pt}
{\bf Step 1. Checking that $F_1 \in \set(\tau,\beta_1,1,C')$.}  Clearly, we only need to check the second order Pareto condition for the region $\{x: B <x<B_1\}$; we need to show that $B^{-t}x^{-\tau+t} \leq x^{-\tau} + C' x^{-\tau(1+\beta_1)}$, or, $B^{-t} x^t \leq 1+C' x^{-\tau \beta_1}$ for $\{x: B<x<B_1\}$.
Trivially the inequality is true when $x=B$. Also, since the LHS is an increasing function of $x$ while the RHS is a decreasing function of $x$, we verify the claim $F_1 \in \set(\tau,\beta_1,1,C')$ by choosing $B_1 = B+u$ such that
\begin{align}
B^{-t} B_1^t &= 1 + C' B_1^{-\tau \beta_1} \Leftrightarrow \left(1+\frac{u}{B}\right)^t -1 = C'(B+u)^{-\tau\beta_1}.  \label{condB}
\end{align}

\vspace{5pt}
{\bf Step 2. Range of $B_1$.} For convenience, we let $u =: \tilde C B$ (with $\tilde C >0$). Then from~(\ref{condB}),
\begin{equation}\label{ctildecond}
(\tilde C+1)^t =1+ C' (\tilde C+1)^{-\tau \beta_1} B^{-\tau \beta_1}=1+ C' (\tilde C+1)^{-\tau \beta_1} \frac{t}{\upsilon}
\end{equation}
which gives the upper  bound
\begin{equation}\label{rangeB}
 \log (\tilde C+1) \leq \frac{1}{\upsilon}C'(\tilde C+1)^{-\tau \beta_1} \leq \frac{1}{\upsilon}C'.
\end{equation}
Then, $B_1 = (1+\tilde C)B \leq \exp(C'/\upsilon)B$.

\vspace{5pt}
{\bf Step 3. Checking that Separation condition is verified.} 
First, we claim that
$$\|x^\tau(1-F_1(x)) - 1\|_\infty \geq M n^{-\frac{\beta_1}{2\beta_1+1}}.$$
Indeed, since the constructed $F_1$ is a continuous function, we just need to check the kink point $B_1$. Note that 
$$
B_1^{-\tau} \left( 1-F_1(B_1) \right)-1 = C' B_1^{-\tau \beta_1} = C' (\tilde C+1)^{-\tau \beta_1} B^{-\tau \beta_1} \geq C'e^{-\frac{C'\tau \beta_1}{\upsilon}}\frac{t}{\upsilon}
$$
where the last inequality is followed by definition of $B$ and the upper bound (\ref{rangeB}) for $\tilde C$.
Thus $||x^{-\tau} (1-F_1(x))-1||_\infty \geq M n^{-\beta_1/(2\beta_1+1)}$ for $M \leq C'e^{-\frac{C'\tau \beta_1}{\upsilon}}$.

This implies that there exists a point $x_0$ such that
$$|x_0^\tau(1-F_1(x_0)) - 1| \geq (M/2) n^{-\frac{\beta_1}{2\beta_1+1}}.$$
Consider a function $F \in \set(\tau, \beta_0, 1, C')$. We know that this function is such that
$$|x_0^\tau(1-F(x_0)) - 1| \leq C' n^{-\frac{\beta_0}{2\beta_0+1}}.$$
This implies in particular that
$$|x_0^\tau(1-F_1(x_0)) - x_0^\tau(1-F(x_0)) | \geq (M/2) n^{-\frac{\beta_1}{2\beta_1+1}} - C' n^{-\frac{\beta_0}{2\beta_0+1}} \geq (M/4) n^{-\frac{\beta_1}{2\beta_1+1}},$$
for $n$ large enough. This implies that $\|F_1 - \set(\tau, \beta_1, 1, C')\|_{\infty, \tau} \geq (M/4) n^{-\frac{\beta_1}{2\beta_1+1}}$, so $F_1$ belongs to the separated set $\tilde \set_1(\tau, \beta_1,C,C', \rho_n)$ with $\rho_n = M/4 n^{-\frac{\beta_1}{2\beta_1+1}}$.

\vspace{5pt}
{\bf Step 4. Computing the $KL$ divergence.} We define $\tilde F_1$ such that $1-\tilde F_1(x) = x^{-\tau} \one\{1 \leq x \leq B\} + B^{-t} x^{-\tau+t} \one\{x>B\}$ with the same $B = n^{1/(\tau(2\beta_1+1))}$ and $t = \upsilon B^{-\tau \beta_1}$.
From the same calculation in \citet[page 29]{carpentierkim}, we know that,
$$
KL(F_0,\tilde F_1) := \int f_0 \log \frac{f_0}{\tilde f_1} \leq \frac{\upsilon^2}{2\tau n}.
$$
Then, from the fact that $\tilde f_1 \neq  f_1$ only for $x \geq B_1$, it suffices to show that 
$$\int_{B_1}^{\infty} f_0 \log \frac{f_0}{f_1} \leq \int_{B_1}^{\infty} f_0 \log \frac{f_0}{\tilde f_1}, \ \ \text{or} \ \ 
\int_{B_1}^\infty f_0 \log \frac{f_1}{\tilde f_1} \geq 0. 
$$
Note that
$$
\frac{f_1(x)}{\tilde f_1(x)} = \frac{1+C' (1+\beta_1)x^{-\tau \beta_1}}{(1-\frac{t}{\tau})B^{-t}x^t}
$$
which gives 
\begin{align*}
\int_{B_1}^\infty f_0 \log \frac{f_1}{\tilde f_1}dx &=\int_{B_1}^\infty \tau x^{-\tau-1} \log \left( \big( \frac{B}{x} \big)^t \frac{\tau}{\tau-t} \right)dx \\
& \ \ \ + \int_{B_1}^\infty \tau x^{-\tau-1} \log \left( 1+ C'(1+\beta_1)x^{-\tau \beta_1} \right)dx \\
&=: (i) + (ii),
\end{align*}
where 
\begin{align}
(i) &= -t \int_{B_1}^\infty \tau x ^{-\tau-1} \log \left( \big( \frac{x}{B_1} \big) \big( \frac{\tau-t}{\tau} \big)^{1/t} \right) dx -t \int_{B_1}^\infty \tau x^{-\tau-1} \log \left(\frac{B_1}{B} \right) \nonumber\\
&= B_1^{-\tau} \left( \log \big( \frac{\tau}{\tau-t}\big) - \frac{t}{\tau}\right) - t B_1^{-\tau}\log \left(\frac{B_1}{B} \right) \label{bound}
\end{align}
where the second eqaulity is followed by the same calculation as in the paper by \citet[page 29]{carpentierkim}. Then we bound $\log(\tau/(\tau-t))-t/\tau$ below. Using $\log (1+u) \geq u-u^2/2$ for $0<u<1/2$, since $t/(\tau-t) \leq 1/2$ by $t \leq \tau/4$,
\begin{align}
\log \big( \frac{\tau}{\tau-t}\big) - \frac{t}{\tau} &\geq \frac{t}{\tau-t} -\frac{1}{2} \left( \frac{t}{\tau-t}\right)^2- \frac{t}{\tau} \nonumber \\
&= \frac{t^2}{\tau(\tau-t)} -\frac{1}{2} \left( \frac{t}{\tau-t}\right)^2
= \frac{t^2}{\tau-t} \left(\frac{1}{\tau} - \frac{1}{2(\tau-t)} \right). \label{t2bound}
\end{align}

Now we consider $(ii)$. Note that $x^{-\tau \beta_1}$ is decreasing in $x \geq B_1 = (\tilde C+1)B$. Again, using $\log(1+u) \geq u-u^2/2$ for $u\leq 1/2$, since $C'(\beta_1+1) B_1^{-\tau \beta_1} \leq C'(\beta_1+1) t/\upsilon \leq 1/2$ by the upper bound assumption for $t$ and $C'$,
\begin{align}
(ii):= &\int_{B_1}^\infty \tau x^{-\tau-1}  \log \left( 1+C'(\beta_1+1)x^{-\tau\beta_1}\right)dx \nonumber \\
&\geq C'(\beta_1+1) \int_{B_1}^\infty \tau x^{-\tau-1-\tau \beta_1} dx - \frac{C'^2(1+\beta_1)^2}{2} \int_{B_1}^\infty \tau x^{-\tau-1-2\tau\beta_1}dx \nonumber\\
&= C' B_1^{-\tau(\beta_1+1)} - \frac{C'^2(\beta_1+1)^2}{(2\beta_1+1)}  B_1^{-\tau-2\tau \beta_1} \nonumber\\
&=: C'B_1^{-\tau(\beta_1+1)} - A B_1^{-\tau-2\tau \beta_1}, \label{bound1}
\end{align}
by letting $A:= \frac{C'^2(\beta_1+1)^2}{(2\beta_1+1)}$.
Combining (\ref{bound}), (\ref{t2bound}), and (\ref{bound1}), and using the equality $\log \big(\frac{B_1}{B}\big) = \log (1+\tilde C)$
\begin{align}
\int_{B_1}^\infty &f_0 \log \frac{f_1}{\tilde f_1} = (i) + (ii) \nonumber \\
&\geq B_1^{-\tau} \left( \log\big( \frac{\tau}{\tau-t}\big)-\frac{t}{\tau}- t\log \big(\frac{B_1}{B}\big)+ C' B_1^{-\tau\beta_1} -AB_1^{-2\tau\beta_1}\right) \nonumber \\ 
&\geq B_1^{-\tau} t^2 \left( \frac{1}{\tau-t} \left(\frac{1}{\tau} - \frac{1}{2(\tau-t)} \right) -  A (1+\tilde C)^{-2\tau \beta_1}  \right), \label{cal1}
\end{align}
where the final inequality is obtained by observing by the equality (\ref{ctildecond})
$$t\log (\tilde C+1) = \log \left(1+C'(\tilde C+1)^{-\tau \beta_1}\frac{t}{\upsilon}\right) \leq C'(\tilde C+1)^{-\tau \beta_1}\frac{t}{\upsilon}.$$
Then, using $t \leq \tau/4$ and $(1+\tilde C)^{-2\tau \beta_1} \leq 1$,
\begin{align*}
\frac{1}{\tau-t} \left(\frac{1}{\tau} - \frac{1}{2(\tau-t)} \right) -  A (1+\tilde C)^{-2\tau \beta_1} &\geq \frac{1}{\tau}\frac{1}{3\tau} - \frac{C'^2(\beta_1+1)^2}{(2\beta_1+1)} \geq 0
\end{align*}
by our assumption that $C'^2 \leq \frac{2\beta_1+1}{(\beta_1+1)^2} \frac{1}{3\tau^2}$.

\vspace{5pt}
{\bf Step 5. Conclusion.} By using the result of the previous step, we have
$$
KL(F_0,F_1) \leq KL(F_0,\tilde F_1) := \int f_0 \log \frac{f_0}{\tilde f_1} \leq \frac{\upsilon^2}{2\tau n}.
$$
By definition of the KL divergence, we know
$$
KL(F_0^{n},F_1^{n}) = n KL(F_0,F_1) \leq \frac{\upsilon^2}{2\tau}.
$$
This implies by Pinsker's inequality that
$$
\max\Bigg( |\mathbb P_0(\Psi_n = 0) - \mathbb P_1(\Psi_n = 0)|, |\mathbb P_0(\Psi_n = 1) - \mathbb P_1(\Psi_n = 1)| \Bigg) \leq \sqrt{\frac{1}{4\tau }}\upsilon,
$$
which in turn implies (using $\pp_0(\Psi_n = 0) = 1-\pp_0(\Psi_n = 1)$ and $\pp_1(\Psi_n = 1) = 1-\pp_1(\Psi_n=0)$) that we have
$
 \mathbb P_0(\Psi_n = 1)+\mathbb P_1(\Psi_n = 0)   \geq 1 - \sqrt{\frac{1}{4\tau}}\upsilon.
$
This is equivalent to 
$$
\mathbb \mathbb \ee_0 (\Psi_n) + \mathbb E_1 (1-\Psi_n)  \geq 1 - \sqrt{\frac{1}{4\tau}}\upsilon,
$$
which implies that there exists no uniformly consistent test between $F_0$ and $F_1$.

This concludes the proof.

\subsection{Proof of Theorem~\ref{generaltesting}}\label{proof:generaltesting}


Let $M_n:= \lfloor \log(n)/\xi \rfloor$, and we define $M_n-1$ number of tests indexed by $i$ ranged from $0$ to $M_n-2$ such that
\begin{align*}
H_0(i): F \in \set(\tau,\beta_{i},C,C') \hspace{3mm} \mathbf{vs.} \hspace{3mm} H_1(i): F \in \tilde \set(\tau,\beta_{i+2},\beta_i,C,C' \rho_n(\beta_{i+2})),
\end{align*}
where $\tilde \set(\tau,\beta_{i+2},\beta_i,C,C' \rho_n(\beta_{i+2}))$ is defined in Equation (\ref{generalrhon}). 

Let $\alpha>0$ and we define the separation rate similarly to (\ref{rhondefinition}) but with $\alpha/(9M_n)$ as below, 
$$\rho_n(\beta_{i}) = \max \left\{2(E(\alpha/(9M_n))\log(n)+D\log((9M_n)/\alpha), 2C' \right\} n^{-\frac{\beta_{i}}{2\beta_{i}+1}}.$$
From the proof of Theorem \ref{thm:test2} (on page 20--22), we know that there exist estimators of $\tau$, $C$ such that $c_1(\eta) \sim \sqrt{\log(1/\eta)}$ and $c_2(\eta) \sim \log(1/\eta)$ thus we have $E(\eta) \sim (\log(1/\eta))^{3/2}$ uniformly over the class $\bigcup_{\tau \in \mathcal I_1, C \in \mathcal I_2}\set(\tau,b,C,C')$.
Then we can express
$\rho_n(\beta_{i}) \sim (\log(9M_n/\alpha))^{3/2} \log(n) n^{-\frac{\beta_{i}}{2\beta_{i}+1}}.$

Similar to the two points test,  we define succesive tests for $i = 0, \ldots, M_n-2$
 $$\Psi_n(i) = \mathbbm{1} \left\{T_n(i) \geq \frac{\rho_n(\beta_{i+2})}{2} \right\},$$
where
\begin{align*}
T_n(i) &= \sup_{x \leq \hat B} \left(|x^{\hat \tau}\hat p_x - \hat C|- C' x^{-\hat \tau \beta_i} \right)\\
\hat B &= n^{1/{\hat \vartheta(\beta_{i+2})}}\\
\hat \vartheta(\beta_{i+2}) &= \left(\hat \tau + n^{-\frac{\beta_{i+2}}{2\beta_{i+2}+1}}c_1(\alpha/(9M_n)) \right) (2\beta_{i+2}+1).
\end{align*}
By Theorem~\ref{thm:test2}, we know that the $i^{\text{th}}$ test is $(\alpha/M_n)$-uniformly consistent on $\set(\tau,\beta_{i},C,C') \bigcup \tilde \set(\tau,\beta_{i+2},\beta_i,C,C' \rho_n(\beta_{i+2}))$.

Let $F\in \mathcal P_n$ (for the $\mathcal P_n$ defined in Equation~\eqref{newmodel}), and let $\beta^*:=\beta^*(F)$ be the associated index defined in (\ref{betastar}). Let $i^*$ be the smallest index $i$ (corresponding to the largest $\beta_i$) such that $F$ is contained in $\mathcal{S}(\tau, \beta_i, C, C')$. That is, we have $\beta_{i^*} \leq \beta^*< \beta_{i^*-1}$ (and set by convention $i^* = 0$ if $\beta^* > \beta_{0}$). If $i^* \neq 0$, this implies that $F \in \set(\tau,\beta_{i^*},C,C')$ and $F \not\in \set(\tau,\beta_{i^*-1},C,C')$. For $F\in \mathcal P_n$, either (if $i^*=0$) $F\in \set(\tau, B,C,C')$ or (if $i^*\neq 0$) there exists $i\in \{0,\ldots, M_n-2\}$ such that $F \in \tilde \set(\tau,\beta_{i+2}, \beta_i,C,C', \rho_n(\beta_{i+2}))$.

Now we define the estimator $\hat i$ for $i^*$ by choosing two plus the maximum index (corresponding to the smallest $\beta$) which rejects the $i^{\text{th}}$ null hypothesis:     
\begin{equation}\label{istar}
\hat i =\max \Big\{i\in \{0,\ldots, M_n-2\}: \Psi_n(i) =1 \Big\} + 2, 
\end{equation}
and we set $\hat i = 0$ by convention if the set $\{i\in \{0,\ldots, M_n-2\}: \Psi_n(i) =1\}$ is empty.

\textbf{Case 1: $i^*\neq 0$.} Since, as mentioned above, we have $F \in \set(\tau,\beta_{i^*},C,C')$ and $F \not\in \set(\tau,\beta_{i^*-1},C,C')$, we know that either we have 
$$F \in \tilde \set(\tau,\beta_{i^*}, \beta_{i^*-2},C,C', \rho_n(\beta_{i^*-2})),$$ or
$$F \in \tilde \set(\tau,\beta_{i^*+1}, \beta_{i^*-1},C,C', \rho_n(\beta_{i^*-1})).$$
This implies by Theorem~\ref{thm:test2} that either $(i^*-2)^{\text{th}}$ test or $(i^*-1)^{\text{th}}$ test is $\frac{4\alpha}{9M_n}$-consistent. More precisely, with probability larger than $1-\frac{4\alpha}{9M_n}$, either $\Psi_n(i^*-2) = 1$ or $\Psi_n(i^*-1) = 1$.

Moreover, for any $i\geq i^*$, we know that 
$$F\in \set(\tau, \beta_i,C,C').$$
This implies by Theorem~\ref{thm:test2} that for any $M_n-2 \geq i \geq i^*$, $i^{\text{th}}$ test is $\frac{4\alpha}{9M_n}$-consistent. More presicely, for any $M_n-2 \geq i \geq i^*$, with probability larger than $1-\frac{4\alpha}{9M_n}$, $\Psi_n(i) = 0$ (see the proof of Theorem~\ref{thm:test2}).

By an union bound and by the definition of $\hat i$, with probability larger than $1-\alpha$, either $\hat i = i^*$ or $\hat i = i^*+1$.

\textbf{Case 2: $i^* = 0$.} In this case, $F\in \set(\tau, B,C,C')$. So for any $i\geq 0$, we know that 
$$F\in \set(\tau, \beta_i,C,C').$$
This implies by Theorem~\ref{thm:test2} that for any $M_n-2 \geq i \geq 0$, test $i$ is $\frac{4\alpha}{9M_n}$-consistent. More presicely, for any $M_n-2 \geq i \geq i^*$, with probability larger than $1-\frac{4\alpha}{9M_n}$, we have $\Psi_n(i) = 0$. By an union bound and the definition of $\hat i$, with probability larger than $1-\alpha$, $\hat i = i^* = 0$.

\textbf{Conclusion.}

From the two previous cases, we deduce that with probability larger than $1-\alpha$, we have
$|\hat i - i^*|\leq 1.$
This implies in particular that with probability larger than $1-\alpha$,
$|\beta_{\hat i} - \beta_{i^*}|\leq \frac{B-b}{M_n}.$
Moreover, since we have $|\beta^* - \beta_{i^*}| \leq \frac{B-b}{M_n}$ by definiton of $i^*$, together with the previous equation, we have that with probability larger than $1-\alpha$,
$$|\beta_{\hat i} - \beta^*|\leq 2\frac{B-b}{M_n}\leq 4\frac{\xi(B-b)}{\log (n)},$$
for $n$ large enough so that $\log(n) \geq 2\xi$. The constants in the bound are independent of $F$ so the result holds uniformly over $\mathcal P_n$.


\subsection{Proof of Theorem~\ref{generaltesting2}}\label{proof:generaltesting2}

Let $\alpha>0$ and let us consider the estimate $\hat i$ of $i^*$ defined in (\ref{istar}).

Consider the estimator $\hat \tau(\beta_{\hat i+1})$ for $\tau$ (e.g.~Hill's estimator or adaptive estimator as described before) using the sample fraction corresponding to $\beta_{\hat i+1}$. Then we define the confidence interval
$$
C_n = \left\{\tau' : | \hat \tau(\beta_{\hat i+1}) - \tau'| \leq G(\beta_{\hat i+1}) \right\},
$$
where $G(\beta) :=  c_1(\alpha/(12M_n)) n^{-\beta/(2\beta+1)}$ .

From the same argument in the previous Subsection \ref{proof:generaltesting}, we know that $|\hat i - i^*|\leq 1$ and $|\beta_{\hat i} - \beta^*|\leq 4\frac{\xi(B-b)}{\log (n)}$ with probability $1-3\alpha/4$. For notational convenience, we write $\set(\beta) := \bigcup_{\tau \in \mathcal I_1, C\in \mathcal I_2}\set(\tau, \beta, C, C')$.

First, we show that the constructed confidence interval $C_n$ is adaptive. By definition of $\beta^*$, and using $\beta^*(F) < \beta_{i^*-1} = \beta_{i^*}+(B-b)/M_n$, 
 we have (with notation $\varrho:=\frac{12(B-b)\xi}{(2b+1)^2}$)
\begin{align*}
&\sup_{\beta \in [b,B]} \sup_{ F \in \set(\beta) \cap \mathcal{P}_n} \pp_F \left(|C_n| > 2e^{\varrho}G(\beta) \right)\\
&\leq \sup_{F \in \mathcal{P}_n} \pp_F \left(|C_n| > 2e^{\varrho}G(\beta^*(F)) \right) \leq \sup_{F \in \mathcal{P}_n} \pp_F \left(|C_n| > 2e^{\varrho}G(\beta_{i^*-1}) \right)\\
&\leq \sup_{ F \in \mathcal{P}_n} \left\{ \pp_F \left( |C_n| > 2e^{\varrho}G \Big(\beta_{\hat i}+2\frac{(B-b)}{M_n}\Big), |\hat i - i^*|\leq 1\right) + \pp_F(|\hat i - i^*|> 1) \right\} \\
&\leq \sup_{ F \in \mathcal{P}_n} \left\{ \pp_F \left( |C_n| > 2e^{\varrho}G \Big(\beta_{\hat i+1}+3\frac{(B-b)}{M_n}\Big), |\hat i - i^*|\leq 1\right) + \frac{3\alpha}{4} \right\} \\
&\leq \sup_{ F \in \mathcal{P}_n} \left\{ \pp_F \Big( |C_n| > 2G(\beta_{\hat i+1})\Big) + \frac{3\alpha}{4}  \right\} \leq \frac{3\alpha}{4},
\end{align*}
where the penultimate inequality follows by (for $n$ large enough so that $\log(n) \geq2\xi$)
$$\frac{G\big(\beta_{\hat i} + 3\frac{(B-b)}{M_n}\big)}{G(\beta_{\hat i})} \geq n^{-\frac{6(B-b)}{M_n(2\beta_{\hat i +1}+1)^2}} \geq \exp(-\varrho),
$$
 and the last inequality is obtained since $|C_n| = 2G(\beta_{\hat i+1})$ and $|\hat i - i^*| \leq 1$ with probability $1-\frac{3\alpha}{4}$. 
Thus the confidence interval is adaptive.

Then it suffices to prove that the confidence interval is uniform. By definition of $C_n$ and since $|\hat i - i^*| \leq 1$ with probability $1-\frac{3\alpha}{4}$,
\begin{align*}
&\inf_{F \in \mathcal{P}_n} \pp_F(\tau \in C_n) = \inf_{F \in \mathcal{P}_n} \pp_F(|\hat \tau(\beta_{\hat i+1}) - \tau| \leq G(\beta_{\hat i+1}))\\
&\geq \inf_{F \in \mathcal{P}_n} \pp_F(|\hat \tau(\beta_{\hat i+1}) - \tau| \leq G(\beta_{\hat i+1}), |\hat i - i^*| \leq 1)\\
&= \inf_{F \in \mathcal{P}_n} \left\{1-\frac{3\alpha}{4}- \pp_F \left(| \hat \tau(\beta_{\hat i+1})-\tau| > G(\beta_{\hat i+1}), |\hat i - i^*| \leq 1\right)\right\} =(*)
\end{align*}
Now, we consider three possible cases for $\hat i \in \{i^*-1, i^*, i^*+1 \}$, which gives
\begin{align*}
(*) 
&= \inf_{F \in \mathcal{P}_n} \Big\{1-3\alpha/4- \pp_F \left(| \hat \tau(\beta_{i^*})-\tau| > G(\beta_{i^*})| \hat i = i^*- 1\right)\pp_F(\hat i = i^*- 1)\\
&- \pp_F \left(| \hat \tau(\beta_{i^*+1})-\tau| > G(\beta_{i^*+1})| \hat i = i^*\right)\pp_F(\hat i = i^*)\\
&- \pp_F \left(| \hat \tau(\beta_{i^*+2})-\tau| > G(\beta_{i^*+2})| \hat i = i^*+1\right) \pp_F(\hat i = i^*+ 1) \Big\}\\
&\geq \inf_{F \in \mathcal{P}_n} \Big\{1-3\alpha/4- \\
&\ \ \ \ \ \  \Big(\sum_{j \in \{-1,0,1\}}\pp_F \big(|\hat \tau(\beta_{i^*-j})-\tau| \geq G(\beta_{i^*-j})\big)\Big) \pp_F(|\hat i - i^*| \leq 1)\Big\}\\
&\geq 1-\frac{3\alpha}{4} - 3 \Big(\frac{\alpha}{12M_n}\Big)\Big(1-\frac{3\alpha}{4}\Big)  \geq 1-\alpha,
\end{align*}
where the last inequality follows from the definition of $c_1(\alpha/(12M_n))$, that is, $c_1(\epsilon)$ is chosen such that $|\hat \tau (\beta) - \tau| \leq c_1(\epsilon)n^{-\beta/(2\beta+1)}$ with probability $1-\epsilon$ (for $F \in \set(\beta)$). This concludes the proof.

\subsection{Proof of Lemma~\ref{lem:self}}\label{proof:self}


Let $F\in \mathcal G$, then for any $x \geq D$, it verifies
$C_1x^{-\tau(\beta+1)} \leq |1-F(x) - Cx^{-\tau}| \leq C'x^{-\tau(\beta+1)}$ by (\ref{eq:self}).
Note that $\beta^*(F) = \beta \leq  B - 2 \frac{(B-b)}{M_n}$ by definition of $\mathcal{G}$.

Let $i^*$ be defined as in the paragraph above (\ref{istar}). By definition of $i^*$, we know that $\beta_{i^*}\leq \beta < \beta_{i^*-1} = \beta_{i^*} + \frac{B-b}{M_n}$. Moreover, since $\beta \leq  B - 2 \frac{(B-b)}{M_n}$, we know that $i^* \geq 2$.

By definition, we know that $F \in \mathcal S(\tau, \beta_{i^*}, C, C')$. Moreover, we have
\begin{align}
\|F - \mathcal S&(\tau, \beta_{i^*-2}, C, C')\|_{\tau, \infty} = \sup_{F_0 \in \mathcal S(\tau, \beta_{i^*-2}, C, C')} \sup_x |x^{\tau}F_0(x) - x^{\tau}F(x)| \nonumber \\
&\geq \sup_{F_0 \in \mathcal S(\tau, \beta_{i^*-2}, C, C')} \sup_{x \geq D}\Big( |x^{\tau}(1-F(x))-C| - |x^{\tau}(1-F_0(x))-C|\Big) \nonumber\\
&\geq  \sup_{x \geq D}\Big( |x^{\tau}(1-F(x))-C| - \sup_{F_0 \in \mathcal S(\tau, \beta_{i^*-2}, C, C')} |x^{\tau}(1-F_0(x))-C|\Big) \nonumber\\
&\geq  \sup_{x \geq D}\Big( C_1 x^{-\tau\beta} - C'x^{-\tau\beta_{i^*-2}} \Big)
=  \sup_{x \geq D}\Big(x^{-\tau\beta} \big( C_1 - C'x^{-\tau(\beta_{i^*-2}- \beta)}\big) \Big)\nonumber\\
&\geq  \sup_{x \geq D}\Big(x^{-\tau\beta} \big( C_1 - C'x^{-\tau(B-b)/M_n}\big) \Big), \label{lastequation}
\end{align}
since $\beta < \beta_{i^*} + \frac{B-b}{M_n}$ and $\beta_{i^*-2}-\beta_{i^*}>0$.

Now let us consider $x:= \Big(\frac{n}{(\log(n)^2\log\log(n)^{3/2})^{\frac{2\beta+1}{\beta}}}\Big)^{\frac{1}{\tau(2\beta+1)}}$. For $n$ large enough, it is larger than $D$. The equation (\ref{lastequation}) implies  that we have 
\begin{align*}
\|F - \mathcal S&(\tau, \beta_{i^*-2}, C, C')\|_{\tau, \infty} \geq \log(n)^2 \log\log(n)^{\frac{3}{2}}n^{-\frac{\beta}{2\beta+1}} \big( C_1 - C'x^{-\frac{\tau\xi (B-b)}{2\log(n)}}\big).
\end{align*}
Then we can upper bound $x^{-\frac{\tau\xi (B-b)}{2\log(n)}}$ for $n$ large enough such that $\log\log(n)^2 \leq 2\log(n)$, 
\begin{align*}
 x^{-\frac{\tau\xi (B-b)}{2\log(n)}} &= e^{-\frac{\xi(B-b)}{2(2\beta+1)}} \left((\log(n))^2(\log \log(n))^{\frac{3}{2}} \right)^{\frac{\xi(B-b)}{2\beta(\log n)}}\\
&\leq e^{-\frac{\xi(B-b)}{2(2\beta+1)}} \left(2^{\frac{3}{2}} (\log(n))^{3}\right)^{\frac{\xi(B-b)}{2\beta(\log n)}}\leq 2e^{-\frac{\xi(B-b)}{2(2B+1)}},
\end{align*}
where the last inequality follows by choosing $n$ large enough so that 
$$\frac{\xi(B-b)}{2\beta \log(n)} \log \left(2^{\frac{3}{2}}(\log(n))^3 \right) \leq 1/2.$$
Also, we bound using $\beta_{i*} \leq \beta < \beta_{i^*}+(B-b)/M_n$,
$$
n^{-\frac{\beta}{2\beta+1}}=n^{-1+\frac{1}{2\beta+1}} > n^{-1 + \frac{1}{2(\beta_{i^*}+ (B-b)/M_n)+1}}\geq n^{-\frac{\beta_{i^*}}{2\beta_{i^*}+1}}
e^{-\frac{\xi(B-b)}{2(2\beta+1)}}.
$$
Finally
\begin{align*}
\|F - \mathcal S(\tau, \beta_{i^*-2}, C, C')\|_{\tau, \infty} &\geq  \log(n)\log\log(n)^{3/2}n^{-\frac{\beta_{i^*}}{2\beta_{i^*}+1}} \\\
&\times \log(n)e^{-\frac{\xi(B-b)}{2(2\beta+1)}}\left(C_1-2C'e^{-\frac{\xi(B-b)}{2(2B+1)}}\right)
\end{align*}
%
Since $C_1-2C'e^{-\frac{\xi(B-b)}{2(2B+1)}}$ is positive by assumption, $\log(n)\big(C_1-2C'  e^{-\frac{\xi(B-b)}{2(2B+1)}}\big)$ diverges to infinity when $n$ goes to infinity. This implies in particular that
\begin{align*}
\log(n)^{-1}\log\log(n)^{-3/2}n^{\frac{\beta_{i^*}}{2\beta_{i^*}+1}} \|F - \mathcal S(\tau, \beta_{i^*-2}, C, C')\|_{\tau, \infty} \rightarrow  \infty,
\end{align*}
and $F \in \tilde{\mathcal{S}}(\tau, \beta_{i^*}, \beta_{i^*-2}, C, C',\rho_n(\beta_{i^*}))$. This concludes the proof.

\subsection{Proof of Lemma~\ref{lem:discdist}}\label{proof:selfi}

Since $F \in\mathcal H$, we have
$$1 - F(x) = Cx^{-\tau} + \tilde C x^{-\tau (1+\beta)}+ o(x^{-\tau (1+\beta)}).$$
Let $0\leq u<t$ and $k \in \{a_j,j\in \mathbb N\}$ be large compared to $t$. We have by definition of $\tilde F$ followed by Taylor expansion,
\begin{align}
1 - &\tilde F(k + u) = 1 - F(k) = C k^{-\tau}+ \tilde C k^{-\tau(1+\beta)} + o(k^{-\tau(1+\beta)}) \nonumber\\
&= C (k+u)^{-\tau} (1-\frac{u}{k+u})^{-\tau} + \tilde C (k+u)^{-\tau(1+\beta)} + o( (k+u)^{-\tau(1+\beta)}) \nonumber\\
&= C (k+u)^{-\tau} (1+\frac{ u\tau}{k+u}) +o\big((k+u)^{-\tau(1+1/\tau)}\big) + \tilde C (k+u)^{-\tau(1+\beta)} \nonumber\\ 
&\ \ \ + o\big((k+u)^{-\tau(1+\beta)}\big) \nonumber\\
&= C (k+u)^{-\tau} + u \tau (k+u)^{-\tau(1+1/\tau)} + \tilde C (k+u)^{-\tau(1+\beta)} \nonumber\\ 
&+ o\big( (k+u)^{-\tau(1+\min(\beta, 1/\tau))} \big).\label{eq:mimite}
\end{align}
This implies in particular that $\beta^*(\tilde F) = \min(\beta, 1/\tau)$, and that 
$$\tilde F \in \mathcal{S}(\tau, \min(\beta, 1/\tau), C, C'),$$
where $C'$ is a large enough constant.

\paragraph{Claim 1: $\tilde F \not \in \mathcal H$ if $\beta>1/\tau$.} 

Assume that $\beta>1/\tau$. In this case, $\beta^*(\tilde F) = 1/\tau$ Then by Equation~\eqref{eq:mimite}
\begin{align*}
1 - \tilde F(x) = C x^{-\tau} + (x - k) \tau x^{-\tau(1+\beta^*(\tilde F))} + o( x^{-\tau(1+\beta^*(\tilde F))} ),
\end{align*}
where $k$ is the largest element of $\{a_j, j \in \mathbb N\}$ that is smaller than or equal to $x$.

Consider first the case where $x = k \in\{a_j, j \in \mathbb N\}$. Then $u=0$, so 
$$1- \tilde F(x) = Cx^{-\tau} + o( x^{-\tau(1+\beta^*(\tilde F))} ) = x^{-\tau} \big(C + o( x^{-\tau\beta^*(\tilde F)} )\big),$$ which shows that the second order term in (\ref{exachall}) must be $0$.

Consider now $x = k+t/2$ where $k\in\{a_j, j \in \mathbb N\}$. We have
$$1- \tilde F(x) = C x^{-\tau} + \frac{t\tau}{2} x^{-\tau(1+\beta^*(\tilde F))} + o( x^{-\tau(1+\beta^*(\tilde F))} ),$$ 
which shows that the second order term must be of the form $(t\tau/2)x^{-\tau(1+\beta^*(\tilde F))}$.

The two previous equations imply that $\tilde F$ cannot be in model~\eqref{exachall} (and thus also not in the model~\eqref{comp:model1}).

\paragraph{Claim 2: $\tilde F \in \mathcal P_n$.} 

Let $b,B$ be such that $\beta^*(\tilde F) = \min(\beta, 1/\tau) \in (b,B)$. Let $\epsilon>0$. For $x$ large enough, we have by Equation~\eqref{eq:mimite}
\begin{align}
|1- \tilde F(x) - Cx^{-\tau} - &(x-k) \tau x^{-\tau(1+1/\tau)} - \tilde C x^{-\tau(1+\beta)}\Big| \nonumber \\ 
&\leq  \min(\tau t/4, |\tilde C|/2)x^{-\tau(1+\beta^*(\tilde F))}.\label{eq:mimite2}
\end{align}
Let $D$ be the constant such that for any $x\geq D$, the above equation is satisfied.



Let $i^*$ be defined as in the paragraph above in (\ref{istar}), and $n$ be large enough so that $i^* \geq 2$ (which is possible since $1/\tau \in (b,B)$). We will prove $\tilde F \in \mathcal P_n$ by showing $\tilde F \in \tilde{\mathcal{S}}(\tau, \beta_{i^*}, \beta_{i^*-2}, C, C', \rho_n(\beta_{i^*}))$ for $n$ large enough.  

By definition, we know $\tilde F \in \mathcal S(\tau, \beta_{i^*}, C, C')$. As in the proof of Lemma~\ref{proof:self}, we have for $n$ large enough, by Equation~\eqref{eq:mimite2}
\begin{align}
\|\tilde F - &\mathcal S(\tau, \beta_{i^*-2}, 1, C')\|_{\tau, \infty} = \sup_{F_0 \in \mathcal S(\tau, \beta_{i^*-2}, C,  C')} \sup_{x\geq D} |x^{\tau}F_0(x) - x^{\tau}\tilde F(x)| \nonumber \\
&\geq \sup_{F_0 \in \mathcal S(\tau, \beta_{i^*-2}, C, C')} \sup_{x\geq D}\Big( |x^{\tau}(1-\tilde F(x)) - C| - |x^{\tau}(1-F_0(x)) - C|\Big) \nonumber\\
&\geq  \sup_{x\geq D}\Big( \min_{|s|\leq 1} \Big|(x-k) \tau x^{-\tau/\tau} + \tilde C x^{-\tau\beta} + s\min(\tau t/4, |\tilde C|/2)x^{-\tau\min(\beta, 1/\tau)}\Big| \nonumber\\ 
&- \sup_{F_0 \in \mathcal S(\tau, \beta_{i^*-2}, C, C')} |x^{\tau}(1-F_0(x))-C|\Big) \nonumber\\
&\geq  \sup_{x \geq D, \ x \in \{a_j,j\in\mathbb N\}+t/2}\Big(  \min(|\tilde C|/2, \tau t/4) x^{-\tau(\min(\beta, 1/\tau))} \label{eq:hellio}\\ 
&\ \ \ \ - \sup_{F_0 \in \mathcal S(\tau, \beta_{i^*-2}, C, C')} |x^{\tau}(1-F_0(x))-C|\Big) \nonumber\\
&\geq  \sup_{x \geq D, \ x \in \{a_j,j\in\mathbb N\}+t/2}\Big(  C_\tau x^{-\tau \beta^*(\tilde F)} - C'x^{-\tau\beta_{i^*-2}} \Big) \nonumber\\
&=  \sup_{x \geq D, \ x \in \{a_j,j\in\mathbb N\}+t/2}\Big(x^{-\tau \beta^*(\tilde F)} \big( C_\tau - C'x^{-\tau(\beta_{i^*-2}- \beta^*(\tilde F))}\big) \Big)\nonumber\\
&\geq  \sup_{x \geq D, \ x \in \{a_j,j\in\mathbb N\}+t/2}\Big(x^{-\tau \beta^*(\tilde F)} \big( C_\tau - C'x^{-\tau(B-b)/M_n}\big) \Big), \label{lastequation2}
\end{align}
where Equation~\eqref{eq:hellio} comes from the fact that for $x \in \{a_j,j\in\mathbb N\}+t/2$, we have
\begin{align*}
1 - \tilde F(x) = C x^{-\tau} +  \frac{t}{2} \tau x^{-\tau(1+1/\tau)} + \tilde C x^{-\tau(1+\beta)} + o( x^{-\tau(1+\min(\beta,1/\tau))} ),
\end{align*}
and where $C_\tau = \min(|\tilde C|/2, \tau t/4)$, and (\ref{lastequation2}) follows
since $\beta^*(F) < \beta_{i^*} + \frac{B-b}{M_n}$ and $\beta_{i^*-2}-\beta_{i^*}>0$. By the same bounds as in $\S$~\ref{proof:self} on $C'x^{-\tau(B-b)/M_n}$ (for a constant $\xi$ large enough), it follows 
$$F \in \tilde{\mathcal{S}}(\tau, \beta_{i^*}, \beta_{i^*-2},C, C',\rho_n(\beta_{i^*})).
$$
This concludes the proof by definition of $\mathcal P_n$.


\vspace{10pt}
\textbf{Acknowledgements} \newline
The authors are grateful to Richard Nickl and Richard Samworth for helpful suggestions and valuable advice.

\bibliographystyle{chicago}
\bibliography{arlene_alex}

\end{document}